\pgfplotsset{compat=1.13}
\tikzset{>={Straight Barb[round,angle=60:.12cm 1]}} 
\newcommand{\sep}{\unskip,\ }
\newcommand{\MSC}[1][2010]{\noindent\textit{{#1} MSC:}}
\newenvironment{keyword}{\noindent\textit{Keywords:}}{}
\def\csname ve\endcsname{\varepsilon} 
\title{Galerkin finite element methods for the numerical solution of two 
classical-Boussinesq type systems over variable bottom topography}
\author[1,2]{G. Kounadis}
\author[1,2]{D.~C. Antonopoulos}
\author[1,2]{V.~A. Dougalis \thanks{Corresponding author, email: 
{\tt doug@math.uoa.gr}}} 
\affil[1]{Department of Mathematics, National and Kapodistrian University of 
Athens, 15784 Zografou, Greece} 
\affil[2]{Institute of Applied and Computational Mathematics, FORTH, 70013 
Heraklion, Greece} 
\date{\vspace{-5ex}}
\newtheorem{theorem}{Theorem} 
\numberwithin{equation}{section} 
\numberwithin{theorem}{section} 
\newcommand{\hzr}{\smash{\stackrel{\raisebox{-5pt}{$\scriptscriptstyle 0$}}{H}}}
\DeclareMathOperator{\opP}{P} 
\DeclareMathOperator{\opR}{R} 
\begin{document} 
\maketitle 

\begin{abstract} 
We consider two `Classical' Boussinesq type systems modelling two-way 
propagation of long surface waves in a finite channel with variable bottom 
topography. Both systems are derived from the 1-d Serre-Green-Naghdi (SGN) 
system; one of them is valid for stronger bottom variations, and coincides with 
Peregrine's system, and the other is valid for smaller bottom variations. We 
discretize in the spatial variable simple initial-boundary-value problems 
(ibvp's) for both systems using standard Galerkin-finite element methods and 
prove $L^2$ error estimates for the ensuing semidiscrete approximations. We 
couple the schemes with the 4th order-accurate, explicit, classical Runge-Kutta 
time-stepping procedure and use the resulting fully discrete methods in 
numerical simulations of dispersive wave propagation over variable bottoms with 
several kinds of boundary conditions, including absorbing ones. We describe in 
detail the changes that solitary waves undergo when evolving under each system 
over a variety of variable-bottom environments. We assess the efficacy of both 
systems in approximating these flows by comparing the results of their 
simulations with each other, with simulations of the SGN-system, and with 
available experimental data from the literature. 
\end{abstract} 

\begin{keyword}
Boussinesq systems \sep surface dispersive long-wave propagation \sep variable 
bottom topography \sep Galerkin finite element methods \sep Error estimates \sep 
solitary waves 

\MSC[2020] 65M60 \sep 65M12 
\end{keyword}

\section{Introduction} \label{sec:1} 
The `Classical' Boussinesq system, \cite{W}, in one spatial dimension is the 
nonlinear, dispersive system of pde's 
\begin{equation}\label{eq:CB}\tag{CB} 
\begin{aligned} 
& \zeta_t + u_x + \varepsilon(\zeta u)_x = 0, \\ 
& u_t + \zeta_x + \varepsilon uu_x - \frac \mu 3 u_{xxt} = 0. 
\end{aligned} 
\end{equation} 
It has been derived, cf.\ e.g.\ \cite{W}, as an approximation of the 
two-dimensional Euler equations of water-wave theory, and models two-way 
propagation of long waves of small amplitude on the surface of an ideal fluid 
(say, water) in a horizontal channel of finite depth. The variables in 
\eqref{eq:CB} are nondimensional and scaled; $x$ and $t$ are proportional to 
length along the channel and time, respectively and the function $\varepsilon 
\zeta(x,t)$ represents the free surface elevation of the water above a level of 
rest at $z=0$. (Here $z$ is proportional to the depth variable and is taken 
positive upwards). The function $u=u(x,t)$ is the depth-averaged horizontal 
velocity of the fluid. The scaling parameters $\varepsilon$, $\mu$ are defined 
as $\varepsilon = \frac{A}{h_0}$, where $A$ is a typical amplitude of the 
surface wave and $h_0$ is the depth of the channel, and as 
$\mu=\frac{h_0^2}{\lambda^2}$, where $\lambda$ is a typical wavelength of the 
waves. The assumptions behind the derivation of \eqref{eq:CB} are that 
$\varepsilon\ll1$, $\mu\ll 1$, and that $\varepsilon$ and $\mu$ are related so 
that $\varepsilon = \mathcal O(\mu)$, i.e.\ are in the so-called Boussinesq 
scaling regime. The first pde in \eqref{eq:CB} is exact while the second is an 
$\mathcal O(\varepsilon^2)$ approximation to a relation obtained from the Euler 
equations. It is to be noted that in the variables of \eqref{eq:CB}, the 
horizontal bottom lies at $z=-1$. 

The initial-value problem for \eqref{eq:CB} with initial data $\zeta(x,0) = 
\zeta_0(x)$, $u(x,0)=u_0(x)$ on the real line has been studied by Schonbek 
\cite{S} and Amick \cite{Am}, who established global existence and uniqueness of 
smooth solutions under the assumption that $1+\varepsilon \inf_x \zeta_0(x) > 
0$. One conclusion of this theory is that for all $t\geq 0$, $1+\varepsilon 
\inf_x \zeta(x,t) > 0$, i.e.\ that there is always water in the channel. 
Existence-uniqueness of solutions globally in time in Sobolev spaces were 
established in \cite{BCS2}. The initial-boundary-value problem (ibvp) for 
\eqref{eq:CB} posed on a finite interval, say $[0,1]$, with zero boundary 
conditions for $u$ at $x=0$ and $x=1$, and no boundary conditions for $\zeta$, 
was proved in \cite{Ada11} to possess global weak (distributional) solutions. 

The system \eqref{eq:CB} has been used and solved numerically extensively in the 
engineering literature. We will refer here just to \cite{AD2} and \cite{AD1} 
for error estimates of Galerkin-finite element methods for the ibvp for 
\eqref{eq:CB} mentioned above and a computational study of the properties of the 
solitary-wave solutions of the system. For the numerical analysis of the 
periodic ivp we refer to \cite{ADMper}. 

In this paper we will be interested in the numerical solution of extensions of 
\eqref{eq:CB} valid in channels of variable-bottom topography. Several such 
extensions have been derived in the literature. Here we will follow \cite{LB} 
and consider two specific such variable-bottom models that may be derived from 
the Serre-Green-Naghdi \eqref{eq:SGN} system of equations, \cite{51, 52, GN}; 
for their derivation and theory of their validity we refer to \cite{LB} and 
\cite{La} and their references. 

\begin{figure}[htbp]
\centering 
\begin{tikzpicture} 
\draw[->] (1,-3) -- (1,1) node[left] {$z$}; 
\draw[->] (-.5,0) -- (8,0) node[right] {$x$}; 
\node[below left] at (1,0) {$0$}; 

\draw (-.5,-.25) .. controls (.25,-.5*3/4) and (.25,-.5*3/4) .. 
(1,0) .. controls (2,.5) and (2,.5) .. 
(3,0) .. controls (4,-.5) and (4,-.5) .. 
(5,0) .. controls (6,.5) and (6,.5) ..  
(7,0) .. controls (7.5,-.25) and (7.5,-.25) .. 
(8,-.25); 
\draw[<-] (2.7,.15) -- (3.2,.5) node[right] {$z=\varepsilon\zeta(x,t)$}; 

{[yshift=-2.5cm] 
\draw[dashed] (-.5,0) -- (8,0); 
\node[below left] at (1,0) {$-1$}; 
\draw (-.5,.25)  .. controls (-.5+1.25,.5) and (-.5+1.25,.5) ..
(2,0) .. controls (2+1.25,-.5) and (2+1.25,-.5) ..  
(4.5,0) .. controls (4.5+1.25,.5) and (4.5+1.25,.5) ..  
(7,0) .. controls (7+.5,-.5*.5/1.25) and (7+.5,-.5*.5/1.25) .. 
(8,-.25); 
\draw[<-] (6.25,.25) -- (5.25,-.35); 
\node[right] at (4.5,-.6) {$z = -\eta_b(x)\equiv -1+\beta b(x)$}; 
}

{[xshift=4cm] 
\draw[<->] (0,-2.7) -- (0,-.37); 
\node[right] at (0,-1.25) {$\eta(x,t) = \varepsilon\zeta(x,t) + \eta_b(x)$}; 
}
\end{tikzpicture} 
\caption{Scaled variables and variable-bottom topography. \label{fig:1}} 
\end{figure}
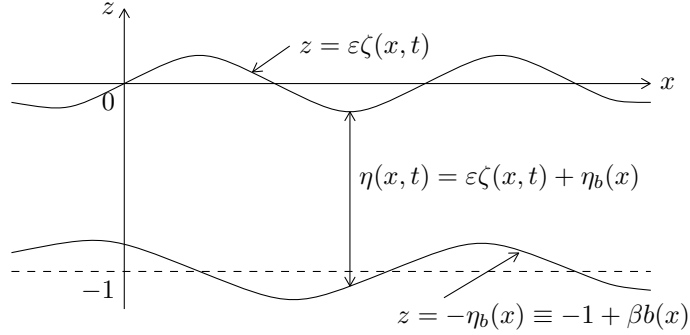 

In order to describe the topography of the bottom, in addition to $\varepsilon$ 
and $\mu$ we consider the scaling parameter $\beta$ defined by $\beta = 
\frac{B}{h_0}$, where $B$ is a typical bottom topography variation and $h_0$ is 
now a reference depth. In scaled nondimensional variables consistent with those 
in \eqref{eq:CB} the Serre-Green-Naghdi equations are written as 
\begin{equation}\label{eq:SGN}\tag{SGN} 
\begin{aligned} 
& \zeta_t + (\eta u)_x = 0, \\ 
& \begin{multlined}[.75\textwidth]  
\left(1+\frac \mu \eta \mathcal T[\eta, \beta b]\right)u_t + \zeta_x + 
\varepsilon uu_x \\ 
+ \mu \varepsilon\left\{-\frac{1}{3\eta}(\eta^3(uu_{xx} - (u_x)^2)_x + \mathcal 
Q[\eta,\beta b]u\right\} = 0, 
\end{multlined} 
\end{aligned} 
\end{equation} 
where the operators $\mathcal T[\eta,\beta b]$, $\mathcal Q[\eta,\beta b]$ are 
defined by 
\begin{align*} 
& \mathcal T[\eta,\beta b]w = -\frac 1 3 (\eta^3w_x)_x + \frac \beta 2[(\eta^2b' 
w)_x - \eta^2b' w_x] + \beta^2\eta(b')^2w, \\ 
& \begin{multlined}[.99\textwidth] 
\mathcal Q[\eta,\beta b]w = \frac{\beta}{2\eta}\left\{(\eta^2 ww_xb')_x + 
(\eta^2b'' w^2)_x - \eta^2[ww_{xx}-w^2_x]b'\right\} \\ 
+ \beta^2b'b''w^2 + \beta^2ww_x(b')^2. 
\end{multlined} 
\end{align*} 

In these variables the bottom topography is given by $z=-\eta_b(x)$, where 
$\eta_b(x) = 1-\beta b(x)$ and $b$ is assumed to be a $C^2_b$ function. Since 
the free surface is at $z=\varepsilon\zeta(x,t)$, cf.\ Figure \ref{fig:1}, the 
{\em water depth} $\eta$ in \eqref{eq:SGN} is given by $\eta=\varepsilon\zeta + 
\eta_b$. 

The assumptions under which \eqref{eq:SGN} is a valid approximation to the 
2d-Euler equations are, cf.\ \cite{LB}, 
\begin{equation}\label{eq:1p1} 
\mu\ll1,\quad \varepsilon=\mathcal O(1),\quad \beta=\mathcal O(1). 
\end{equation} 
It may then be seen that the second pde in \eqref{eq:SGN} is formally an 
$\mathcal O(\mu^2)$ approximation of an analogous expression for the Euler 
equations. (The first pde is exact.) 
It will be also assumed that the bottom never reaches the undisturbed surface 
i.e.\ that $\eta_b(x) = 1-\beta b(x)>0$, for all $x$. It will also be assumed 
that at $t=0$ the water depth $\eta$ is positive. Part of the theory of 
existence-uniqueness of solutions of the initial-value problem for 
\eqref{eq:SGN} is to prove that the data is such that $\eta$ remains positive 
for the duration of existence of solutions. This is what is proved locally in 
time and in some generality in \cite{La}. Also, in the case of the 1d 
\eqref{eq:SGN} as given above, a local in time theory of existence and 
uniqueness of solutions of the ivp with energy methods has been given by Israwi, 
\cite{I}. 

The model \eqref{eq:SGN} has been used in many computational studies of 
long-surface wave propagation over uneven bottoms. We refer, for example, to 
\cite{B}, \cite{CBM2} and \cite{BCLMT} and their references for computations 
with finite differences and finite volume methods, and to \cite{MSMc} for a 
finite element scheme. An error analysis of the Galerkin-finite element method 
in the case of a horizontal bottom (i.e.\ when $\beta=0$), appears in 
\cite{ADM17} in the case of the periodic ivp. 

As was mentioned previously, our aim in this paper is to consider two 
simplifications of \eqref{eq:SGN} that are variable-bottom extensions of 
\eqref{eq:CB}. The derivation of the first of those systems, in addition to 
\eqref{eq:1p1}, is made under the Boussinesq scaling hypothesis that 
$\varepsilon=\mathcal O(\mu)$ (we usually take $\varepsilon=\mu$), and allows 
arbitrary bottom topography, i.e.\ $\beta=\mathcal O(1)$, cf.\ \cite{LB}. When 
we take $\varepsilon=\mu$ in the second pde in \eqref{eq:SGN} and ignore 
$\mathcal O(\mu^2)$ terms (thus retaining the formal accuracy of \eqref{eq:SGN} 
as approximation of the Euler equations), it is not hard to see that 
$\eta=\eta_b+\mathcal O(\mu)$ and that $\frac \mu \eta = \frac{\mu}{\eta_b} + 
\mathcal O(\mu^2)$. Therefore $\frac \mu \eta T[\eta,\beta b]w = 
\frac{\mu}{\eta_b}T[\eta_b,\beta b]w + \mathcal O(\mu^2)$. So, since 
$\mu\varepsilon = \mathcal O(\mu^2)$, if we ignore $\mathcal O(\mu^2)$ terms the 
second pde in \eqref{eq:SGN} becomes 
\[
\left(1+\frac{\mu}{\eta_b}\mathcal T[\eta_b\beta b]\right)u_t + \zeta_x + 
\varepsilon uu_x = 0. 
\] 
Together with the first pde in \eqref{eq:SGN} we obtain therefore a simplified 
system of equations that incorporates the hypothesis $\varepsilon=\mathcal 
O(\mu)$ but allows $\beta=\mathcal O(1)$. This system will be called {\em 
`Classical' Boussinesq system with strongly varying bottom topography} and 
abbreviated as \eqref{eq:CBs}. It is given by the pde's 
\begin{equation}\label{eq:CBs}\tag{CBs} 
\begin{aligned} 
& \zeta_t + (\eta u)_x = 0, \\ 
& \left(1+\frac{\mu}{\eta_b} \mathcal T[\eta_b,\beta b]\right)u_t + \zeta_x + 
\varepsilon uu_x = 0, 
\end{aligned} 
\end{equation} 
where $\eta=\eta_b+\varepsilon\zeta>0$, $\eta_b = 1-\beta b>0$, 
$\varepsilon=\mathcal O(\mu)\ll 1$, and $\mathcal T[\eta_b,\beta b]w$ is given 
by its expression in \eqref{eq:SGN} when we replace $\eta$ by $\eta_b$. 

This system, as a little algebra shows, coincides with the system that was first 
derived from the Euler equations by Peregrine in \cite{P}; it is usually called 
the `Peregrine system' in the literature and has been used widely in practice in 
coastal dynamics computations. We will refer to several computational studies 
with \eqref{eq:CBs} in Section \ref{sec:3} of the present paper. If we now 
assume in \eqref{eq:CBs} following \cite{LB} that $\beta=\mathcal 
O(\varepsilon)$, i.e.\ that the variation of bottom is small and specifically of 
the order $\varepsilon$ of the nonlinear and dispersion terms in \eqref{eq:CBs}, 
we obtain a system that we will call here the {\em `Classical' Boussinesq system 
with weakly varying bottom topography}, \eqref{eq:CBw}. It is straightforward to 
see that if $\beta = \mathcal O(\varepsilon)$ the first equation in 
\eqref{eq:CBs} remains intact and that the second equation, up to $\mathcal 
O(\varepsilon^2)$ terms that we neglect, coincides with the second equation in 
\eqref{eq:CB}. Thus we have the system 
\begin{equation}\label{eq:CBw}\tag{CBw} 
\begin{aligned} 
& \zeta_t + (\eta u)_x = 0, \\ 
& u_t + \zeta_x + \varepsilon uu_x - \frac \mu 3 u_{xxt} = 0, 
\end{aligned} 
\end{equation} 
where of course we still assume that $\varepsilon=\mathcal O(\mu)$, $\mu\ll 1$. 
The dependence on the bottom topography occurs now explicitly (but weakly) 
through the first pde only, since $\eta = \eta_b + \varepsilon\zeta = 1-\beta b 
+ \varepsilon\zeta$ with $\beta=\mathcal O(\varepsilon)$. This system has also 
been used widely in computations in the engineering literature, and coincides 
with the system derived in \cite{C1}. It should be noted that another rigorous 
derivation of the two variable-bottom `Classical' Boussinesq systems and various 
other associated models has been given in \cite{Ch}. 

The theory of existence and uniqueness of solutions, at least locally in time, 
for the ivp for \eqref{eq:CBs} may be easily inferred from the analogous theory 
of \eqref{eq:SGN}, cf.\ e.g.\ \cite{I}, while that of \eqref{eq:CBw} is 
practically the same as the one for \eqref{eq:CB} plus a `source'-type linear 
term of the form $-\beta(bu)_x$ in the left-hand side of the first pde. 

In this paper we will discretize in space ibvp's for the systems \eqref{eq:CBs} 
and \eqref{eq:CBw}, with zero b.c.\ for $u$ at the endpoints of $[0,1]$ and no 
b.c.\ for $\zeta$, by the standard Galerkin-finite element method on a 
quasiuniform mesh and prove $L^2$-error estimates in Section \ref{sec:2} for the 
resulting semidiscretizations. Under certain standard assumptions on the finite 
element spaces we will prove error estimates of the form 
\begin{equation}\label{eq:1p2} 
\|\zeta-\zeta_h\| + \|u-u_h\| \leq C h^{r-1}, 
\end{equation} 
where $\zeta_h$, $u_h$ are the semidiscrete approximations of $\zeta$ and $u$, 
respectively, $h=\max_i h_i$, and $r-1\geq 2$ is the degree of the piecewise 
polynomials in the finite element space. ($\|\cdot\|$ and $\|\cdot\|_1$ denote, 
respectively the $L^2$ and $H^1$ norms of functions on $[0,1]$.) This type of 
error estimate is of the same type as the one proved in \cite{AD2} for the 
analogous ibvp for \eqref{eq:CB} in the case of a quasiuniform mesh. 

In Section \ref{sec:3} we show the results of several numerical experiments that 
we performed with both systems using a fully discrete scheme with the above 
spatial discretization and with temporal discretization effected by the 
classical, 4\textsuperscript{th} order, 4-stage Rugne-Kutta method. The 
resulting schemes are stable under a mild Courant number restriction and highly 
accurate. In Section \ref{subsec:3p1} we check that the schemes also work for 
piecewise linear continuous functions (i.e.\ for $r=2$ and are of optimal order 
in $L^2$ for both $u$ and $\zeta$ in the case of uniform mesh. In Section 
\ref{subsec:3p2} we discuss the application of simple, approximate, absorbing 
boundary conditions for the systems as an alternative to the reflection b.c.\ 
$u=0$ at the endpoints. In Section \ref{subsec:3p3} we perform a series of 
numerical experiments aimed at describing in detail the changes that solitary 
waves undergo when evolving under \eqref{eq:CBs} or \eqref{eq:CBw} in a variety 
of variable-bottom environments. We assess the efficacy of these systems in 
approximating these flows by comparing them with each other and with the 
\eqref{eq:SGN} system and available experimental data. In the Ph.D.\ thesis of 
the first listed author, \cite{GK}, one may find more details on the theory 
behind, and more numerical experiments with these systems, as well as with 
related models of surface water wave propagation over variable bottom. 

In the sequel, we denote, for integer $k\geq 0$, $C^k=C^k[0,1]$ the spaces of 
$k$-times continuously differentiable functions on $[0,1]$ and by $H^k=H^k(0,1)$ 
the usual $L^2$-based Sobolev spaces of functions on $(0,1)$. $\hzr^1$ will 
denote the elements of $H^1$ which vanish at $x=0$ and $x=1$. The inner product 
in $L^2=L^2(0,1)$ will be denoted by $(\cdot,\cdot)$, its norm by $\|\cdot\|$, 
and the norm on $H^k$ by $\|\cdot\|_k$. The norms on $W_\infty^k$ and $L^\infty$ 
on $(0,1)$ are denoted by $\|\cdot\|_{k,\infty}$ and $\|\cdot\|_\infty$, 
respectively. $\mathbb P_r$ are the polynomials of degree at most $r$.

\section{Error analysis of the Galekin semidiscretization} 
\label{sec:2} 

\subsection{The finite element spaces} 
\label{subsect:2p1} 

Let $0\leq x_1<x_2<\ldots<x_{N+1}=1$ be a quasiuniform partition of $[0,1]$ with 
$h:= \max_i(x_{i+1}-x_i)$. For integers $r\geq 2$ and $0\leq k\leq r-2$ we 
consider the finite element space $S_h = \{\phi\in C^k : 
\phi\big|_{[x_i,x_{i+1}]}\in \mathbb P_{r-1}\}$ and $S_{h,0} = \{\phi\in S_h : 
\phi(0)=\phi(1) = 0\}$. It is well known that if $w\in H^r$ there exists 
$\chi\in S_h$ such that 
\begin{equation}\label{eq:2p1} 
\|w-\chi\| + h\|w'-\chi'\| \leq C h^r \|w\|_r 
\end{equation} 
for some constant $C$ independent of $h$ and $w$, and that a similar property 
holds in $S_{h,0}$ provided $w\in H^r\cap H^1_0$. In addition, if $\opP$ is the 
$L^2$ prejection operator onto $S_h$, then it holds, cf.\ \cite{DDW}, that 
\begin{subequations} \label{eq:2p2}
\begin{align} 
& \|\opP v\|_\infty \leq C \|v\|_\infty,\quad \forall v\in L^\infty, 
\label{eq:2p2a} \\ 
& \|\opP v-v\|_\infty \leq C h^r \|v\|_{r,\infty},\quad \forall v\in C^r. 
\label{eq:2p2b} 
\end{align} 
\end{subequations} 
Due to the quasiuniformity of the mesh, the inverse inequalities 
\begin{equation}\label{eq:2p3} 
\|\chi\|_1 \leq C h^{-1} \|\chi\|,\quad \|\chi\|_\infty\leq C h^{-1/2}\|\chi\| 
\end{equation} 
are valid for $\chi\in S_h$ (or $\chi\in S_{h,0}$).

\subsection{Semidiscretization in the case of a strongly varying bottom}
\label{subsec:2p2} 

Using the notation of the Introduction we consider the following 
initial-boundary-value problem (ibvp) for \eqref{eq:CBs}. For $T>0$ we seek 
$\zeta=\zeta(x,t)$, $u=u(x,t)$, for $(x,t)\in [0,1]\times [0,T]$, such that 
\begin{equation}\label{eq:2p4} 
\begin{aligned} 
& \begin{aligned} 
& \zeta_t + (\eta u)_x = 0, \\ 
& \left(1 + \frac{\mu}{\eta_b}\mathcal T[\eta_b,\beta b]\right)u_t + \zeta_x + 
\varepsilon uu_x = 0, 
\end{aligned} && 0\leq x\leq1,\ \ 0\leq t\leq T, \\ 
& \zeta(x,0) = \zeta_0(x),\ \ u(x,0) = u_0(x), && 0\leq x\leq 1, \\ 
& u(0,t) = u(1,t) = 0, && 0\leq t\leq T, 
\end{aligned} 
\end{equation} 
where $\zeta_0$, $u_0$ are given functions of $[0,1]$ and 
\[ 
\eta=\varepsilon\zeta + \eta_b>0,\quad \eta_b(x) = 1-\beta b(x)>0, 
\] 
$\varepsilon$, $\mu$, $\beta$, are positive constants with $\varepsilon=\mathcal 
O(\mu)$, $\mu\ll 1$, $\beta=\mathcal O(1)$, and $b\in C^2[0,1]$. The operator 
$\mathcal T[\eta_b,\beta b]$ is defined as in Section \ref{sec:1} by 
\[ 
\mathcal T[\eta_b,\beta b]w = -\frac 1 3 (\eta_b^3w_x)_x + \frac \beta 2 
[(\eta_b^2b'w)_x - \eta_b^2b'w_x] + \beta^2\eta_b(b')^2w. 
\]
All the variables above are nondimensional and scaled. We will assume that the 
ibvp \eqref{eq:2p4} has a unique solution that is smooth enough for the purposes 
of the error estimates to follow. Taking into account that 
\[ 
\mathcal T[\eta_b,\beta b]w = -\frac 1 3 (\eta_b^3w_x)_x + \frac \beta 2 
(\eta_b^2b')'w + \beta^2\eta_b(b')^2w, 
\] 
and that $\eta_b'=-\beta b'$, we have 
\begin{equation}\label{eq:2p5} 
\mathcal T[\eta_b,\beta b]w = -\frac 1 3 (\eta_b^3w_x)_x - \frac 1 2 
\eta_b^2\eta_b''w. 
\end{equation} 

Using in first pde of \eqref{eq:2p4} the definition of $\eta$, multiplying the 
second pde by $\eta_b$, and taking into account \eqref{eq:2p5}, we rewrite the 
ibvp \eqref{eq:2p4} in the form 
\begin{equation}\label{eq:2p6} 
\begin{aligned} 
& \begin{aligned} 
& \zeta_t + \varepsilon(\zeta u)_x + (\eta_bu)_x = 0, \\ 
& \left(\eta_b - \frac \mu 2 \eta_b^2\eta_b''\right)u_t - \frac \mu 3 
(\eta_b^3u_{tx})_x + \eta_b\zeta_x + \varepsilon\eta_buu_x = 0, 
\end{aligned}\quad (x,t)\in [0,1]\times[0,T], \\ 
& \zeta(x,0)=\zeta_0(x),\ \ u(x,0)=u_0(x),\quad x\in [0,1], \\ 
& u(0,t)=u(1,t)=0,\quad t\in [0,T]. 
\end{aligned} 
\end{equation} 
We assume that there are positive constants $c_1$ and $c_2$ such that 
\begin{subequations} \label{eq:2p7} 
\begin{align} 
\eta_b(x) &\geq c_1, \label{eq:2p7a} \\ 
\eta_b(x) - \frac \mu 2 \eta_b^2(x)\eta_b''(x) &\geq c_2, \label{eq:2p7b} 
\end{align} 
\end{subequations} 
for all $x\in [0,1]$. Since $\eta_b$ and its derivatives are $\mathcal O(1)$, 
\eqref{eq:2p7b} holds for $\mu$ sufficiently small. We also consider the 
bilinear form $A: H_0^1\times H_0^1 \to \mathbb R$ defined by 
\begin{equation}\label{eq:2p8} 
A(v,w) = \left((\eta_b - \frac \mu 2 \eta_b^2\eta_b'')v,w\right) + \frac \mu 3 
(\eta_b^3v',w'), 
\end{equation} 
which is symmetric, bounded on $H^1\times H^1$, and, because of \eqref{eq:2p7}, 
coercive, with 
\begin{equation}\label{eq:2p9} 
A(v,v) \geq c_2\|v\|^2 + \frac{\mu c_1^3}{3}\|v'\|^2 \geq c_\mu\|v\|_1^2,\quad 
\forall v\in H^1, 
\end{equation} 
where $c_\mu := \min(c_2,\mu c_1^3/3)$. Consider now a weighted $H^1$ 
(`elliptic') projection associated with the bilinear form \eqref{eq:2p9} as the 
map $\opR_h : \hzr^1\to S_{h,0}$ defined by 
\begin{equation}\label{eq:2p10} 
A(\opR_h v,\chi) = A(v,\chi),\quad \forall \chi\in S_{h,0}, 
\end{equation} 
for which, cf.\ e.g.\ \cite{DDW}, it holds that 
\begin{gather} 
\|\opR_hv-v\| + h\|\opR_hv-v\|_1 \leq C h^r \|v\|_r,\quad \text{if}\ \ v\in 
H^r\cap H_0^1, \label{eq:2p11} \\ 
\|\opR_hv-v\|_\infty \leq C h^r \|v\|_{r,\infty}, \quad \text{if}\ \ v\in 
W^{r,\infty}\cap H_0^1. \label{eq:2p12} 
\end{gather} 

We now define the standard Galerkin finite element semidiscretization of the 
ibvp \eqref{eq:2p6}. We seek $\zeta_h : [0,T]\to S_h$, $u_h : [0,T]\to S_{h,0}$ 
such that 
\begin{align} 
& (\zeta_{ht},\phi) + \varepsilon\bigl((\zeta_hu_h,\phi)_x,\phi\bigr) + 
\bigl((\eta_bu_h)_x,\phi\bigr) = 0,\quad  \forall \phi\in S_h, \quad  
\smash{\raisebox{-.5\baselineskip}{$\displaystyle 0\leq t\leq T,$}} 
\label{eq:2p13} \\ 
& A(u_{ht},\chi) + (\eta_b\zeta_{hx},\chi) + 
\varepsilon(\eta_bu_hu_{hx},\chi)=0,\quad \forall\chi\in S_{h,0}, 
\label{eq:2p14} 
\end{align}
with initial conditions 
\begin{equation}\label{eq:2p15} 
\zeta_h(0)=\opP\zeta_0,\quad u_h(0)=\opR_h u_0. 
\end{equation} 
The ode ivp given by \eqref{eq:2p13}--\eqref{eq:2p15} has a unique solution 
locally in time. As part of Theorem \ref{thm:2p1} below we will prove that for 
sufficiently small $h$, its solution may be extended up to $t=T$. 

\begin{theorem}\label{thm:2p1} 
Suppose that the solution $(\zeta,u)$ of \eqref{eq:2p6} is sufficiently smooth 
and that the conditions \eqref{eq:2p7} hold. Then, if $h$ sufficiently small, 
there exists a constant $C$ independent of $h$ such that the semidiscrete 
problem \eqref{eq:2p13}--\eqref{eq:2p15} has a unique solution $(\zeta_h,u_h)$ 
for $0\leq t\leq T$, that satisfies 
\begin{equation}\label{eq:2p16}  
\max_{0\leq t\leq T}\left(\|\zeta(t)-\zeta_h(t)\| + \|u(t)-u_h(t)\|_1\right) 
\leq C h^{r-1}. 
\end{equation} 
\end{theorem}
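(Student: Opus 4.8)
The plan is to use the standard energy (elliptic/$L^2$-projection) method. Split the two errors as $\zeta-\zeta_h=\rho+\theta$ and $u-u_h=\sigma+\xi$, with $\rho:=\zeta-\opP\zeta$, $\theta:=\opP\zeta-\zeta_h\in S_h$, $\sigma:=u-\opR_h u$ and $\xi:=\opR_h u-u_h\in S_{h,0}$. The projection parts are known data: by \eqref{eq:2p1} and \eqref{eq:2p11}, $\|\rho\|\le Ch^r$ and $\|\sigma\|+h\|\sigma\|_1\le Ch^r$, so the rate $h^{r-1}$ in \eqref{eq:2p16} is already forced by $\|\sigma\|_1\le Ch^{r-1}$; it remains to show that $\|\theta\|$ and $\|\xi\|_1$ are of (at least) the same order. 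By the initial choice \eqref{eq:2p15}, $\theta(0)=\xi(0)=0$.

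Because $\opP$ and $\opR_h$ commute with $\partial_t$ and $A$ is time-independent, the projection definitions give $(\rho_t,\phi)=0$ for $\phi\in S_h$ and $A(\sigma_t,\chi)=0$ for $\chi\in S_{h,0}$. Testing \eqref{eq:2p6} against $S_h$ and $S_{h,0}$ (the second pde producing exactly $A(u_t,\chi)+(\eta_b\zeta_x,\chi)+\varepsilon(\eta_b uu_x,\chi)=0$ after integrating the dispersive term by parts) and subtracting \eqref{eq:2p13}--\eqref{eq:2p14} yields the error equations
\[
\begin{aligned}
& (\theta_t,\phi) + \varepsilon\bigl((\zeta u-\zeta_h u_h)_x,\phi\bigr) + \bigl((\eta_b(\sigma+\xi))_x,\phi\bigr)=0, \\
& A(\xi_t,\chi) + \bigl(\eta_b(\rho_x+\theta_x),\chi\bigr) + \varepsilon\bigl(\eta_b(uu_x-u_hu_{hx}),\chi\bigr)=0.
\end{aligned}
\]
Taking $\phi=\theta$, $\chi=\xi$ and adding, the principal linear coupling cancels: since $\xi$ vanishes at $x=0,1$, integration by parts gives $\bigl((\eta_b\xi)_x,\theta\bigr)+(\eta_b\theta_x,\xi)=0$. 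This structural identity is what makes the proof close, and it is the reason the dispersion has been carried inside the coercive form $A$; the surviving left-hand side is $\tfrac12\frac{d}{dt}\bigl(\|\theta\|^2+A(\xi,\xi)\bigr)$, with $A(\xi,\xi)\ge c_\mu\|\xi\|_1^2$ by \eqref{eq:2p9}.

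The leftover linear terms are bounded by the projection estimates: $\bigl((\eta_b\sigma)_x,\theta\bigr)$, after an integration by parts whose boundary term vanishes because $\sigma$ inherits the zero boundary values of $u$, is $\le C\|\sigma\|\,h^{-1}\|\theta\|\le Ch^{r-1}\|\theta\|$ via the inverse inequality \eqref{eq:2p3}, while $(\eta_b\rho_x,\xi)=-\bigl((\eta_b\xi)_x,\rho\bigr)\le C\|\rho\|\,\|\xi\|_1\le Ch^r\|\xi\|_1$; these feed $Ch^{2(r-1)}$ into the data and $C(\|\theta\|^2+A(\xi,\xi))$ into Gronwall. The nonlinear terms are the main obstacle. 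Writing $\zeta_h=\zeta-\rho-\theta$, $u_h=u-\sigma-\xi$ and expanding, $\varepsilon\bigl((\zeta u-\zeta_h u_h)_x,\theta\bigr)$ and $\varepsilon\bigl(\eta_b(uu_x-u_hu_{hx}),\xi\bigr)$ break into terms linear in $\rho,\sigma$ (data, of size $h^{r-1}$), terms linear in $\theta,\xi$ with smooth coefficients (Gronwall), and genuinely quadratic terms such as $\bigl((\theta\xi)_x,\theta\bigr)$ and $(\eta_b\xi\xi_x,\xi)$. To control the quadratic terms I would run a continuation/bootstrap argument: assume on a maximal $[0,t^\ast]\subseteq[0,T]$ that $\|\theta\|+\|\xi\|_1\le h^{r-3/2}$; then \eqref{eq:2p3} gives $\|\theta\|_\infty,\|\xi\|_\infty\le Ch^{r-2}$, and the worst contribution, e.g. $(\theta_x\xi,\theta)\le\|\xi\|_\infty\|\theta_x\|\|\theta\|\le Ch^{r-3}\|\theta\|^2$, is $\le C\|\theta\|^2$ exactly when $r-3\ge0$, i.e. when $r-1\ge2$. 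This is precisely where the degree hypothesis enters.

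Collecting everything gives
\[
\frac{d}{dt}\bigl(\|\theta\|^2+A(\xi,\xi)\bigr)\le C\bigl(\|\theta\|^2+A(\xi,\xi)\bigr)+Ch^{2(r-1)},
\]
and since $\theta(0)=\xi(0)=0$, Gronwall yields $\|\theta(t)\|^2+A(\xi(t),\xi(t))\le Ch^{2(r-1)}$ on $[0,t^\ast]$, hence $\|\theta\|+\|\xi\|_1\le Ch^{r-1}$. As $Ch^{r-1}<h^{r-3/2}$ for $h$ small, the bootstrap bound is strictly improved, so $t^\ast$ cannot be an interior maximum and $t^\ast=T$; the same a priori bound keeps $(\zeta_h,u_h)$ bounded and extends the local ODE solution of \eqref{eq:2p13}--\eqref{eq:2p15} to $[0,T]$. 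Finally the triangle inequality with $\|\rho\|\le Ch^r$, $\|\sigma\|_1\le Ch^{r-1}$ and $\|\theta\|+\|\xi\|_1\le Ch^{r-1}$ gives \eqref{eq:2p16}. The two genuine difficulties are the linear-coupling cancellation (requiring the elliptic projection tuned to $A$ together with the boundary conditions) and the bootstrap control of the quadratic terms, which is what forces $r-1\ge2$.
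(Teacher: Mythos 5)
Your proposal follows essentially the paper's own route: the identical splitting $\rho=\zeta-\opP\zeta$, $\theta=\opP\zeta-\zeta_h$, $\sigma=u-\opR_h u$, $\xi=\opR_h u-u_h$ with the $L^2$ projection for $\zeta$ and the $A$-elliptic projection for $u$, the same error equations, an energy estimate with $\phi=\theta$, $\chi=\xi$, and Gronwall closed by a continuation argument with $\theta(0)=\xi(0)=0$. Two details differ, and they matter. First, your exact cancellation $\bigl((\eta_b\xi)_x,\theta\bigr)+(\eta_b\theta_x,\xi)=0$ is correct (it uses only $\xi\in S_{h,0}$) and is a pleasant simplification, but it is not needed: the paper integrates $(\eta_b\theta_x,\xi)$ by parts and bounds both coupling terms separately by $C\|\theta\|\,\|\xi\|_1$, which Gronwall absorbs. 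Second --- and this is the substantive difference --- your treatment of the worst quadratic term via $(\theta_x\xi,\theta)\le\|\xi\|_\infty\|\theta_x\|\,\|\theta\|$ spends an inverse inequality on $\theta_x$ and therefore forces $r\ge 3$. The paper never lets the derivative sit on $\theta$: since $\xi$ vanishes at the endpoints, $\bigl((\theta\xi)_x,\theta\bigr)=\tfrac12(\xi_x\theta,\theta)\le\tfrac12\|\theta\|_\infty\|\xi_x\|\,\|\theta\|$, and the bootstrap is run on the weaker hypothesis $\|\theta\|_\infty+\|\xi\|_\infty\le 1$, closed at the end by $\|\theta\|_\infty\le Ch^{-1/2}\|\theta\|\le Ch^{r-3/2}$ and $\|\xi\|_\infty\le\|\xi\|_1\le Ch^{r-1}$. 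That version of the argument is valid for every $r\ge2$, i.e.\ it also covers the piecewise-linear elements that the finite element spaces of Section \ref{subsect:2p1} allow and that Section \ref{subsec:3p1} uses, whereas yours does not.

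A related caution: the same antisymmetrization is mandatory, not optional, for the term $\varepsilon\bigl((u\theta)_x,\theta\bigr)$, which you file under ``terms linear in $\theta$ with smooth coefficients (Gronwall).'' There the coefficient $u$ is $\mathcal O(1)$, so the inverse-inequality device you used for $(\theta_x\xi,\theta)$ would give $Ch^{-1}\|\theta\|^2$; Gronwall would then produce a factor $\mathrm e^{CT/h}$ and the argument would not close for any $r$. One must write $\bigl((u\theta)_x,\theta\bigr)=\tfrac12(u_x\theta,\theta)\le\tfrac12\|u_x\|_\infty\|\theta\|^2$ (using $u(0,t)=u(1,t)=0$), exactly as the paper does in \eqref{eq:2p19}. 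With that step made explicit, your proof is correct for $r\ge3$; the paper's sharper bookkeeping yields the same statement for all $r\ge2$.
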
 

\begin{proof} 
Let $\rho = \zeta-\opP\zeta$, $\theta=\opP\zeta-\zeta_h$, $\sigma=u-\opR_h u$, 
$\xi=\opR_h u-u_h$. From \eqref{eq:2p6} and \eqref{eq:2p13}--\eqref{eq:2p15} we 
get 
\begin{align} 
& (\theta_t,\phi) + \varepsilon\bigl((\zeta u-\zeta_hu_h)_x,\phi\bigr) + 
\big((\eta_b\sigma+\eta_b\xi)_x,\phi)=0,\quad \forall\phi\in S_h, 
\label{eq:2p17} \\ 
& A(\xi_t,\chi) + \bigl(\eta_b(\rho_x+\theta_x),\chi\bigr) + 
\varepsilon\bigl(\eta_b(uu_x-u_hu_{hx}),\chi\bigr) = 0,\quad \forall\chi\in 
S_{h,0}, \label{eq:2p18} 
\end{align} 
that are valid while the semidiscrete problem has a unique solution. For the 
nonlinear terms we have 
\begin{align*} 
& \zeta u - \zeta_hu_h = \zeta(\sigma+\xi) + u(\rho+\theta) - 
(\rho+\theta)(\sigma+\xi), \\ 
& uu_x - u_hu_{hx} = (u\sigma)_x + (u\xi)_x - (\sigma\xi)_x - \sigma\sigma_x 
-\xi\xi_x. 
\end{align*} 
Let now $t_h\in (0,T]$ be the maximal temporal instance for which the solution 
of \eqref{eq:2p6} exists and it holds that $\|\theta(t)\|_\infty + 
\|\xi(t)\|_\infty \leq 1$, for $t\leq t_h$. Putting $\phi=\theta$ in 
\eqref{eq:2p17}, using \eqref{eq:2p1}, \eqref{eq:2p2b}, \eqref{eq:2p11}, 
\eqref{eq:2p12}, \eqref{eq:2p3}, and integrating by parts we have for $t\leq 
t_h$ 
\begin{equation}\label{eq:2p19} 
\begin{aligned} 
\tfrac 1 2 \tfrac{\mathrm d}{\mathrm dt}\|\theta\|^2 = & 
-\varepsilon\big((\zeta\sigma)_x,\theta\big) - 
\varepsilon\big((\zeta\xi)_x,\theta\big) - \varepsilon\big((u\rho)_x,\theta\big) 
- \varepsilon\big((u\theta)_x,\theta\big) + 
\varepsilon\big((\rho\sigma)_x,\theta\big) \\ 
& + \varepsilon\big((\rho\xi)_x,\theta) + 
\varepsilon\big((\sigma\theta)_x,\theta\big) + 
\varepsilon\big((\theta\xi)_x,\theta\big) - \big((\eta_b\sigma)_x,\theta\big) - 
\big((\eta_b\xi)_x,\theta\big) \\ 
\leq &\ \varepsilon(\|\zeta_x\|_\infty\|\sigma\| + 
\|\zeta\|_\infty\|\sigma_x\|)\|\theta\| + \varepsilon(\|\zeta_x\|_\infty\|\xi\| 
+ \|\zeta\|_\infty\|\xi_x\|)\|\theta\| \\ 
& + \varepsilon(\|u_x\|_\infty\|\rho\| + \|u\|_\infty\|\rho_x\|)\|\theta\| + 
\tfrac \varepsilon 2 \|u_x\|_\infty\|\theta\|^2 \\ 
& + \varepsilon(\|\sigma\|_\infty\|\rho_x\| + 
\|\rho\|_\infty\|\sigma_x\|)\|\theta\| + \varepsilon(\|\xi\|_\infty\|\rho_x\| + 
\|\rho\|_\infty\|\xi_x\|)\|\theta\| \\ 
& + \tfrac \varepsilon 2 \|\theta\|_\infty\|\sigma_x\|\|\theta\| + \tfrac 
\varepsilon 2 \|\theta\|_\infty\|\xi_x\|\|\theta\| + 
(\|\eta'_\beta\|_\infty\|\sigma\| + \|\eta_b\|_\infty\|\sigma_x\|)\|\theta\| \\ 
& + (\|\eta'_\beta\|_\infty\|\xi\| + \|\eta_b\|_\infty\|\xi_x\|)\|\theta\| \\ 
\leq &\ C(h^{r-1} + \|\xi\|_1 + \|\theta\|)\|\theta\|, 
\end{aligned} 
\end{equation} 
for some constant $C$ independent of $h$. 

In addition, with $\chi=\xi$ in \eqref{eq:2p18} we obtain for $t\leq t_h$ 
\begin{align*} 
\tfrac 1 2 \tfrac{\mathrm d}{\mathrm dt} A(\xi,\xi) =& 
-(\eta_b\rho_x+\eta_b\theta_x,\xi) - \varepsilon(\eta_b(u\sigma)_x,\xi) - 
\varepsilon(\eta_b(u\xi)_x,\xi) + \varepsilon(\eta_b(u\xi)_x,\xi) \\ 
& + \varepsilon(\eta_b\sigma\sigma_x,\xi) + \varepsilon(\eta_b\xi\xi_x,\xi) \\ 
 =& (\rho,\eta'_b\xi+\eta_b\xi_x) + (\theta,\eta_b'\xi+\eta_b\xi_x) + 
\varepsilon(u\sigma,\eta_b'\xi+\eta_b\xi_x) - \varepsilon(\eta_b(u\xi)_x,\xi) \\ 
 & -\varepsilon(\sigma\xi,\eta_b'\xi+\eta_b\xi_x) - \tfrac \varepsilon 
2(\sigma^2,\eta_b'\xi+\eta_b\xi_x) - \tfrac \varepsilon 3 \eta_b'\xi^2,\xi). 
\end{align*} 
With estimates analogous to those used in \eqref{eq:2p19} we get 
\begin{equation}\label{eq:2p20} 
\begin{aligned} 
\tfrac 1 2 \tfrac{\mathrm d}{\mathrm dt} A(\xi,\xi) \leq &\ 
(\|\eta_b'\|_\infty\|\xi\| + \|\eta_b\|_\infty\|\xi_x\|)(\|\rho\|+\|\theta\|) + 
\varepsilon\|u\eta_b'\|_\infty\|\sigma\|\|\xi\| \\ 
& + \varepsilon\|u\eta_b\|_\infty\|\sigma\|\|\xi_x\| + 
\varepsilon\|\eta_bu_x\|_\infty\|\xi\|^2 + 
\varepsilon\|\eta_bu\|_\infty\|\xi_x\|\|\xi\| \\ 
& + \varepsilon\|\sigma\eta_b'\|_\infty\|\xi\|^2 + 
\varepsilon\|\sigma\eta_b\|_\infty\|\xi\|\|\xi_x\| + \tfrac \varepsilon 2 
\|\sigma\eta_b'\|_\infty\|\sigma\|\|\xi\| \\ 
& + \tfrac \varepsilon 2 \|\sigma\eta_b\|_\infty\|\sigma\|\|\xi_x\| + \tfrac 
\varepsilon 3 \|\eta_b'\|_\infty\|\xi\|_\infty\|\xi\|^2 \\ 
\leq&\ C(h^r + \|\xi\|_1 + \|\theta\|)\|\xi\|_1, 
\end{aligned} 
\end{equation} 
where $C$ is independent of $h$. From \eqref{eq:2p19} and \eqref{eq:2p20} we see 
that 
\[
\tfrac{\mathrm d}{\mathrm dt}\big(\|\theta\|^2 + A(\xi,\xi)\big) \leq C_1 
h^{2r-2} + C_2\left(\|\theta\|^2+\|\xi\|_1^2\right), 
\] 
where $C_1$, $C_2$ are independent of $h$. From this inequality and 
\eqref{eq:2p9} it follows that 
\[ 
\tfrac{\mathrm d}{\mathrm dt}\big(\|\theta\|^2+A(\xi,\xi)\big) \leq C_1h^{2r-2} 
+ C_\mu\big(\|\theta\|^2+A(\xi,\xi)\big), 
\] 
for $t\leq t_h$, where $C_\mu=C_2\max(1,1/c_\mu)$. Using Gronwall's lemma in the 
above we obtain for $t\leq t_h$, 
\[ 
\|\theta(t)\|^2 + A(\xi(t),\xi(t)) \leq \mathrm e^{C_\mu T}\big(\|\theta(0)\|^2 
+ A(\xi(0),\xi(0)\big) + \tfrac{C_1}{C_\mu}\mathrm e^{C_\mu T}h^{2r-2}, 
\]
from which, in view of \eqref{eq:2p9} and since $\theta(0)= \xi(0) = 0$, we see 
that 
\begin{equation}\label{eq:2p21} 
\|\theta(t)\| + \|\xi(t)\|_1 \leq \left(\frac{2C_1}{C_\mu\widetilde 
C_\mu}\mathrm e^{C_\mu T}\right)^{1/2}h^{r-1}, 
\end{equation} 
for $t\leq t_h$, where $\widetilde C_\mu=\min(1,c_\mu)$. Now, since 
\eqref{eq:2p3} gives $\|\theta\|_\infty\leq C h^{-1/2}\|\theta\|$ and 
$\|\xi\|_\infty\leq \|\xi\|_1$, if $h$ is taken sufficiently small, we have that 
$\|\theta\|_\infty + \|\xi\|_\infty<1$ for $0\leq t\leq t_h$, and therefore we 
may take $t_h=T$. The result follows from \eqref{eq:2p20} and the approximation 
properties of the finite element spaces. 
\end{proof} 

As suggested by numerical experiments for the \eqref{eq:CB} on a {\em horizontal 
bottom}, shown in \cite{AD2}, the convergence rates in the error estimate 
\eqref{eq:2p16} are sharp in the case of a horizontal bottom; they are sharp in 
the case of variable-bottom models as well. The $H^1$ convergence rate of the 
error of $u_h$ is optimal, while the $L^2$ rate for $\eta_h$ suboptimal, as 
expected, since the first pde in \eqref{eq:2p4} is of hyperbolic type and we are 
using the standard Galerkin method on a nonuniform mesh. (For $r=2$ the 
numerical experiments in \cite{AD2} also suggest the improved estimate 
$\|u-u_h\|=\mathcal O(h^2)$.) In the case of {\em uniform mesh}, better results 
were proved in \cite{AD2} in the case of horizontal bottom. The numerical 
experiments in Section \ref{sec:3} of the paper at hand suggest that such 
improved rates of convergence for uniform mesh persist in the presence of a 
variable bottom as well.

\subsection{Semidiscretization in the case of a weakly varying bottom} 
\label{subsec:2p3} 

In the case of a weakly varying bottom, following the remarks in Section 
\ref{sec:1}, we consider the following ibvp for the system \eqref{eq:CBw}. For 
$T>0$ we seek $\zeta=\zeta(x,t)$, $u=u(x,t)$, for $(x,t)\in [0,1]\times [0,T]$, 
such that 
\begin{equation} \label{eq:2p22} 
\begin{aligned} 
& \begin{aligned} 
& \zeta_t + (\eta u)_x = 0, \\ 
& u_t + \zeta_x + \varepsilon uu_x - \frac \mu 3 u_{xxt} = 0, 
\end{aligned}\quad 0\leq x\leq 1,\ \ 0\leq t\leq T, \\ 
& \zeta(x,0) = \zeta_0(x),\ \ u(x,0)=u_0(x),\quad 0\leq x\leq 1, \\ 
& u(0,t)=u(1,t),\quad 0\leq t\leq T, 
\end{aligned} 
\end{equation} 
where 
\[ 
\eta=\varepsilon\zeta + \eta_b > 0,\quad \eta_b(x)=1-\beta b(x)>0, 
\] 
and $\varepsilon$, $\mu$, $\beta$, are positive constants with 
$\varepsilon=\mathcal O(\mu)$, $\beta=\mathcal O(\mu)$, $\mu\ll 1$, and 
$b=C^2[0,1]$. All the variables above are nondimensional and scaled. We assume 
that \eqref{eq:2p22} has a unique solution, smooth enough for the purposes of 
the error estimate below. 

Let $a : H_0^1\times H_0^1 \to\mathbb R$ denote the weighted $H^1$-inner product 
defined by $a(v,w) = (v,w) + \tfrac \mu 3 (v',w')$ and consider the weighted 
$H^1$ (`elliptic') projection associated with $a(\cdot,\cdot)$, defined as the 
map $\widetilde \opR_h : \hzr^1\to S_{h,0}$ such that 
\begin{equation}\label{eq:2p23} 
a(\widetilde \opR_hv,\chi) = a(v,\chi),\quad \forall \chi\in S_h. 
\end{equation} 
Obviously, $\widetilde \opR_h$ satisfies the properties \eqref{eq:2p11} and 
\eqref{eq:2p12}. 

The standard Galerkin finite element semidiscretization of the ibvp 
\eqref{eq:2p22} is the following. We seek $\zeta_h : [0,T]\to S_h$, $u_h : [0,T] 
\to S_{h,0}$, such that 
\begin{align} 
& (\zeta_{ht},\phi) + \varepsilon\big((\zeta_hu_h)_x,\phi\big) + 
\big((\eta_bu_h)_x,\phi\big) = 0,\quad \forall\phi\in S_h, \label{eq:2p24} \\ 
& a(u_{ht},\chi) + (\zeta_{hx},\chi) + \varepsilon(u_hu_{hx},\chi) = 0,\quad 
\forall\chi\in S_{h,0}, \label{eq:2p25} 
\end{align} 
with initial conditions 
\begin{equation}\label{eq:2p26} 
\zeta_h(0) = \opP\zeta_0,\quad u_h(0) = \widetilde \opR_hu_0. 
\end{equation} 
In analogy with Theorem \ref{thm:2p1}, the following error estimate holds for 
the semidiscrete scheme \eqref{eq:2p24}--\eqref{eq:2p26}. (We omit the proof 
since it is very similar to that of Theorem \ref{thm:2p1}, {\em mutatis 
mutandis}.) 
\begin{theorem}\label{thm:2p2} 
Suppose that the solution $(\zeta,u)$ of \eqref{eq:2p22} is sufficiently smooth. 
Then, if $h$ is sufficiently small, there exists a constant $C$ independent of 
$h$ such that the semidiscrete problem \eqref{eq:2p24}--\eqref{eq:2p26} has a 
unique solution $(\zeta_h,u_h)$ for $0\leq t\leq T$, that satisfies 
\begin{equation}\label{eq:2p27} 
\max_{0\leq t\leq T}\left(\|\zeta(t)-\zeta_h(t)\| + \|u(t)-u_h(t)\|_1\right) 
\leq C h^{r-1}. 
\end{equation} 
\end{theorem}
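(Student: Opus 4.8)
The plan is to mirror the proof of Theorem \ref{thm:2p1} exactly, exploiting the fact that the weakly-varying system \eqref{eq:2p22} is a simplification of \eqref{eq:2p6}. First I would introduce the same error splitting $\rho=\zeta-\opP\zeta$, $\theta=\opP\zeta-\zeta_h$, $\sigma=u-\widetilde\opR_hu$, $\xi=\widetilde\opR_hu-u_h$, where now $\widetilde\opR_h$ is the $a$-elliptic projection \eqref{eq:2p23}. Since the first pde of \eqref{eq:2p22} (namely $\zeta_t+(\eta u)_x=0$ with $\eta=\eta_b+\varepsilon\zeta$) is \emph{identical} to that of \eqref{eq:2p6}, subtracting \eqref{eq:2p24} from it and using $(\rho_t,\phi)=0$ for $\phi\in S_h$ produces the very same error equation \eqref{eq:2p17}. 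Consequently, setting $\phi=\theta$ and integrating by parts reproduces estimate \eqref{eq:2p19} verbatim, giving $\tfrac12\tfrac{\mathrm d}{\mathrm dt}\|\theta\|^2\le C(h^{r-1}+\|\xi\|_1+\|\theta\|)\|\theta\|$ for $t\le t_h$, where $t_h$ is again the maximal time on which $\|\theta\|_\infty+\|\xi\|_\infty\le1$.

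The only genuinely new computation is the momentum equation. Subtracting \eqref{eq:2p25} from the weak form of the second pde of \eqref{eq:2p22}, and using $a(\sigma_t,\chi)=0$ for $\chi\in S_{h,0}$ (which follows from \eqref{eq:2p23} differentiated in $t$), yields
\[
a(\xi_t,\chi)+\bigl((\rho+\theta)_x,\chi\bigr)+\varepsilon\bigl(uu_x-u_hu_{hx},\chi\bigr)=0,\quad\forall\chi\in S_{h,0}.
\]
This is the analogue of \eqref{eq:2p18}, but with the coercive form $a$ replacing $A$ and, crucially, with every factor $\eta_b$ replaced by $1$; the nonlinear splitting $uu_x-u_hu_{hx}=(u\sigma)_x+(u\xi)_x-(\sigma\xi)_x-\sigma\sigma_x-\xi\xi_x$ is unchanged. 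Taking $\chi=\xi$, integrating by parts in the linear term so that $-((\rho+\theta)_x,\xi)=(\rho+\theta,\xi_x)$ (since $\xi$ vanishes at the endpoints), and bounding the nonlinear contributions exactly as in \eqref{eq:2p20} — now without the harmless $\eta_b,\eta_b'$ weights — I would obtain $\tfrac12\tfrac{\mathrm d}{\mathrm dt}a(\xi,\xi)\le C(h^r+\|\xi\|_1+\|\theta\|)\|\xi\|_1$.

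Adding the two differential inequalities gives $\tfrac{\mathrm d}{\mathrm dt}\bigl(\|\theta\|^2+a(\xi,\xi)\bigr)\le C_1h^{2r-2}+C_2(\|\theta\|^2+\|\xi\|_1^2)$. Here the role played by \eqref{eq:2p9} in the strongly-varying case is taken over by the coercivity $a(v,v)=\|v\|^2+\tfrac\mu3\|v'\|^2\ge\min(1,\mu/3)\|v\|_1^2$, which lets me replace $\|\xi\|_1^2$ on the right by $a(\xi,\xi)$ up to a constant and then invoke Gronwall's lemma. Because $\theta(0)=0$ and $\xi(0)=0$ by the choice of initial data \eqref{eq:2p26}, this yields $\|\theta(t)\|+\|\xi(t)\|_1\le Ch^{r-1}$ for $t\le t_h$. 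The continuation step is identical: the inverse inequality \eqref{eq:2p3} gives $\|\theta\|_\infty\le Ch^{-1/2}\|\theta\|\le Ch^{r-3/2}$ and $\|\xi\|_\infty\le\|\xi\|_1\le Ch^{r-1}$, both of which tend to $0$ as $h\to0$ since $r\ge2$, so for $h$ small enough $\|\theta\|_\infty+\|\xi\|_\infty<1$ throughout and I may take $t_h=T$. The estimate \eqref{eq:2p27} then follows from the triangle inequality together with the approximation bounds \eqref{eq:2p1} and \eqref{eq:2p11}.

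I do not expect a real obstacle. Replacing $A$ by $a$ and deleting the bottom-dependent weights $\eta_b$ in the momentum equation make \eqref{eq:2p22} strictly easier to handle than \eqref{eq:2p6}, and the hyperbolic first equation — the sole source of the suboptimal $h^{r-1}$ rate — is literally the same in both problems. The only points requiring minor care are verifying that $a$ is coercive with a $\mu$-dependent constant, re-deriving the nonlinear bounds with the $\eta_b$-weights removed, and confirming that the continuation argument still closes; none of these introduces a new difficulty.
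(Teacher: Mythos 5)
Your proposal is correct and is precisely the \emph{mutatis mutandis} adaptation of the proof of Theorem \ref{thm:2p1} that the paper has in mind: the paper omits the proof of Theorem \ref{thm:2p2} on exactly the grounds you identify, namely that the first error equation is literally \eqref{eq:2p17} and the second becomes the unweighted analogue of \eqref{eq:2p18} with $A$ replaced by the coercive form $a$. All the steps you outline (the splitting via $\widetilde\opR_h$, which inherits \eqref{eq:2p11}--\eqref{eq:2p12}, the nonlinear splitting, Gronwall, and the continuation argument via \eqref{eq:2p3}) close without difficulty, just as claimed.
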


\section{Numerical experiments} 
\label{sec:3} 

In this section we present results of numerical experiments that we performed 
using the two models \eqref{eq:CBs} and \eqref{eq:CBw} of the classical Bousinesq 
system with variable bottom. We discretized the two systems in space using the 
Galerkin finite element method analyzed in the previous section. For the 
temporal discretization we used the `classical', explicit, 4-stage, 
4\textsuperscript{th} order Runge-Kutta scheme (RK4). The convergence of this 
fully discrete scheme was analyzed, in the case of the ibvp for the 
\eqref{eq:CB} with horizontal bottom and $u=0$ at the endpoints in \cite{AD2}, 
where it was shown that under a Courant number stability restriction of the form 
$\tfrac k h \leq \alpha$ the scheme is stable, is fourth-order accurate in time, 
and preserves the spatial order of convergence of the semidiscrete problem; here 
$k$ denotes the (uniform) time step.

\subsection{Convergence rates} 
\label{subsec:3p1} 

The spatial convergence rates proved in Theorems \ref{thm:2p1} and \ref{thm:2p2} 
in the case of a general quasiuniform mesh are sharp as is suggested by 
numerical experiments (not shown here). In the case of a uniform spatial mesh 
better convergence rates may be achieved. This was proved in \cite{AD2} for the 
\eqref{eq:CB} (horizontal bottom and $u=0$ at the endpoints of the spatial 
interval) in the case of piecewise linear continuous functions ($r=2$) and cubic 
splines ($r=4$). The numerical results to be presented in the sequel suggest 
that the improved rates persist in the case of a variable bottom as well for 
both CB models. (We do not show the optimal-order results for the piecewise 
linear case ($r=2$) but concentrate instead in the case of cubic splines 
($r=4$).) 

The exact solution of the test problem used for the error rate computations is 
\linebreak[4] 
$\zeta(x,t) = \mathrm e^{2t}(\cos(\pi x) + x + 2)$, $u(x,t) = \mathrm 
e^{xt}(\sin(\pi x) + x^3 - x^2)$ for $(x,t)\in [0,1]\times [0,1/4]$; the bottom 
topography was given by the function $\eta_b(x) = 1-\beta\sin\pi x$. The scaling 
parameters (not important for the convergence rate computations) were taken as 
$\varepsilon=1$, $\mu=1/10$, $\beta=1/10$. Appropriate right-hand sides and 
initial conditions were found from the above data. We solved numerically the 
ibvp's \eqref{eq:2p6} and \eqref{eq:2p22} with the above exact solution and 
bottom profile using the spatial discretizations 
\eqref{eq:2p13}--\eqref{eq:2p15} and \eqref{eq:2p24}--\eqref{eq:2p26}, 
respectively, with cubic splines with uniform mesh of meshlength $h=1/N$. The 
temporal discretization was effected by the RK4 scheme with stability 
restriction $\tfrac k h \leq \tfrac 1 4$; the resulting time steps were small 
enough so that the temporal errors were much smaller than the spatial ones. We 
used 3-point Gauss quadrature to evaluate the finite element integrals in every 
mesh interval. (Since we wished to obtain detailed information about the spatial 
convergence rates, we computed throughout with quadruple precision and evaluated 
he $L^2$-errors using 5-point Gauss quadrature and the $L^\infty$ errors by 
taking the maximum value of the error on all these quadrature points.) 

In Table \ref{tab:1} we show the $L^2$, $L^\infty$, and $H^1$ (seminorm) spatial 
errors and convergence rates in the case of the \eqref{eq:CBw} model. The 
numerical results suggest strongly that the $L^2$ rates for $\zeta$ and $u$ are 
equal to $3.5$ and $4$, respectively, the $L^\infty$ rates 
\begin{table}[htbp] 
\setlength{\tabcolsep}{5pt} 
\centering 
\begin{subtable}{\textwidth}\centering\tt
\begin{tabular}[b]{|c|cc|cc|cc|} \hline
$N$ & $L_2$ {\rm error} & {\rm rate} & $L_\infty$ {\rm error} & {\rm rate} & 
$H_1$ \!\!{\rm semi-nrm} & {\rm rate} \\ \hline
 128 & 6.0526e-09 & -      & 5.6333e-08 & -      & 3.5964e-06 & -    	\\ \hline
 256 & 5.3006e-10 & 3.5133 & 6.9294e-09 & 3.0232 & 6.2857e-07 & 2.5164	\\ \hline
 512 & 4.6605e-11 & 3.5076 & 8.5918e-10 & 3.0117 & 1.1046e-07 & 2.5086	\\ \hline
1024 & 4.1074e-12 & 3.5042 & 1.0696e-10 & 3.0059 & 1.9466e-08 & 2.5045	\\ \hline
2048 & 3.6250e-13 & 3.5022 & 1.3343e-11 & 3.0029 & 3.4357e-09 & 2.5023	\\ \hline
4096 & 3.2016e-14 & 3.5011 & 1.6662e-12 & 3.0015 & 6.0686e-10 & 2.5012	\\ \hline
8192 & 2.8287e-15 & 3.5006 & 2.0816e-13 & 3.0007 & 1.0724e-10 & 2.5006	\\ \hline
\end{tabular} 
\subcaption{$\zeta$ \label{tab:1a}}  
\end{subtable}
\medskip\par 
\begin{subtable}{\textwidth}\centering\tt
\begin{tabular}[b]{|c|cc|cc|cc|}\hline
$N$ & $L_2$ {\rm error} & {\rm rate} & $L_\infty$ {\rm error} & {\rm rate} & 
$H_1$ \!\!{\rm semi-nrm} & {\rm rate} \\ \hline
 128 & 2.9812e-10 & -      & 6.0043e-10 & -      & 2.4080e-07 & -   	\\ \hline
 256 & 1.8618e-11 & 4.0011 & 3.7468e-11 & 4.0023 & 3.0098e-08 & 3.0001	\\ \hline
 512 & 1.1632e-12 & 4.0005 & 2.3407e-12 & 4.0006 & 3.7623e-09 & 3.0000	\\ \hline
1024 & 7.2689e-14 & 4.0002 & 1.4628e-13 & 4.0002 & 4.7029e-10 & 3.0000	\\ \hline
2048 & 4.5427e-15 & 4.0001 & 9.1419e-15 & 4.0001 & 5.8786e-11 & 3.0000	\\ \hline
4096 & 2.8391e-16 & 4.0001 & 5.7136e-16 & 4.0000 & 7.3483e-12 & 3.0000	\\ \hline
8192 & 1.7744e-17 & 4.0000 & 3.5710e-17 & 4.0000 & 9.1853e-13 & 3.0000	\\ \hline
\end{tabular} 
\subcaption{$u$ \label{tab:1b}}  
\end{subtable} 
\vspace{-2ex} 
\caption{Spatial errors and rates of convergence, $t=1/4$, \eqref{eq:CBw}, cubic 
splines on uniform mesh, $h=1/N$, \subref{tab:1a}: $\zeta$, \subref{tab:1b}: 
$u$. \label{tab:1}}
\end{table}
equal to $3$ and $4$, while the $H^1$ ones $2.5$ and $3$, respectively. The same 
rates are observed (cf.\ Table \ref{tab:2}) in the numerical integration by the 
same method of the analogous ibvp for the \eqref{eq:CBs} model.  

As a remark of theoretical interest we point out that in the case of the 
analogous ibvp 
\begin{table}[htbp] 
\setlength{\tabcolsep}{5pt} 
\centering 
\begin{subtable}{\textwidth}\centering\tt 
\begin{tabular}[b]{|c|cc|cc|cc|}\hline
$N$ & $L_2$ {\rm error} & {\rm rate} & $L_\infty$ {\rm error} & {\rm rate} & 
$H_1$ \!\!{\rm semi-nrm} & {\rm rate} \\ \hline
 128 & 6.0165e-09 & -      & 5.5101e-08 & -      & 3.5538e-06 & -     	\\ \hline
 256 & 5.2848e-10 & 3.5090 & 6.8528e-09 & 3.0073 & 6.2481e-07 & 2.5079	\\ \hline
 512 & 4.6535e-11 & 3.5054 & 8.5440e-10 & 3.0037 & 1.1012e-07 & 2.5043	\\ \hline
1024 & 4.1044e-12 & 3.5031 & 1.0666e-10 & 3.0019 & 1.9436e-08 & 2.5023	\\ \hline
2048 & 3.6236e-13 & 3.5017 & 1.3324e-11 & 3.0009 & 3.4331e-09 & 2.5012	\\ \hline
4096 & 3.2010e-14 & 3.5009 & 1.6650e-12 & 3.0005 & 6.0663e-10 & 2.5006	\\ \hline
8192 & 2.8284e-15 & 3.5004 & 2.0809e-13 & 3.0002 & 1.0721e-10 & 2.5003	\\ \hline
\end{tabular}
\subcaption{$\zeta$ \label{tab:2a}}  
\end{subtable}
\medskip\par 
\begin{subtable}{\textwidth}\centering\tt 
\begin{tabular}[b]{|c|cc|cc|cc|}\hline
$N$ & $L_2$ {\rm error} & {\rm rate} & $L_\infty$ {\rm error} & {\rm rate} & 
$H_1$ \!\!{\rm semi-nrm} & {\rm rate} \\ \hline
 128 & 2.9818e-10 & -      & 6.0086e-10 & -      & 2.4081e-07 & -     	\\ \hline
 256 & 1.8621e-11 & 4.0012 & 3.7476e-11 & 4.0030 & 3.0099e-08 & 3.0001	\\ \hline
 512 & 1.1634e-12 & 4.0005 & 2.3411e-12 & 4.0007 & 3.7623e-09 & 3.0000	\\ \hline
1024 & 7.2699e-14 & 4.0002 & 1.4630e-13 & 4.0002 & 4.7029e-10 & 3.0000	\\ \hline
2048 & 4.5433e-15 & 4.0001 & 9.1432e-15 & 4.0001 & 5.8786e-11 & 3.0000	\\ \hline
4096 & 2.8395e-16 & 4.0001 & 5.7144e-16 & 4.0000 & 7.3483e-12 & 3.0000	\\ \hline
8192 & 1.7746e-17 & 4.0000 & 3.5714e-17 & 4.0000 & 9.1853e-13 & 3.0000	\\ \hline
\end{tabular} 
\subcaption{$u$ \label{tab:2b}}  
\end{subtable} 
\vspace{-2ex} 
\caption{Spatial errors and rates of convergence, $t=1/4$, \eqref{eq:CBs}, cubic 
splines on uniform mesh, $h=1/N$, \subref{tab:2a}: $\zeta$, \subref{tab:2b}: 
$u$. \label{tab:2}}
\end{table}
for \eqref{eq:CB} on a horizontal bottom two of the authors proved in \cite{AD2} 
$L^2$ error estimates $\|\zeta-\zeta_h\|\leq C h^{3.5} \sqrt{|\ln h|}$, 
$\|u-u_h\|\leq C h^4 \sqrt{|\ln h|}$, for the semidiscretizastion with cubic 
splines on a uniform mesh. The increased accuracy of our present code affords 
investigating computationally if the logarithmic factors are actually present in 
these estimates. To this end we considered the ibvp for \eqref{eq:CB} with the 
exact solution given previously, but now in the case of the horizontal bottom 
$\eta_b=1$, and found that the rates $\frac{\|\zeta-\zeta_h\|}{h^{3.5}}$ 
stabilized to the value 0.124 (the values of $h$ used were less than $1/1024$ 
and the errors $\|\zeta-\zeta_h\|$ were of $\mathcal O(10^{-12})$ or smaller,) 
while the ratio $\frac{\|\zeta-\zeta_h\|}{h^{3.5}\sqrt{|\ln h|}}$ did not 
stabilize for the same range of $h$'s. Similar observations were made for the 
$u$ component of the error. Therefore these increased accuracy experiments 
suggest that the error estimates in \cite{AD2} are not sharp.

\subsection{Approximate absorbing boundary conditions} 
\label{subsec:3p2} 

In the case of the shallow water \eqref{eq:SW} equations on a horizontal bottom, 
obtained if we set $\mu=0$ in the \eqref{eq:CB} system, i.e.\ for the equations 
\begin{equation}\label{eq:SW}\tag{SW} 
\begin{aligned} 
& \zeta_t + u_x + \varepsilon(\zeta u)_x = 0, \\ 
& u_t + \zeta_x + \varepsilon uu_x = 0, 
\end{aligned}
\end{equation} 
(written here in nondimensional, scaled variables, and where it is assumed that 
$1+\varepsilon\zeta>0$), it is well known that using Riemann invariants and the 
theory of characteristics. \cite{W}, one may derive transparent, {\em 
characteristic boundary conditions} at the endpoints of a finite spatial 
interval, say $[0,1]$. These boundary conditions allow an initial pulse that is 
generated in the interior of $(0,1)$ and travels in both directions to exit the 
interval cleanly. In the case of a subcritical flow, which will be of interest 
here, i.e.\ when the solution of \eqref{eq:SW} satisfies 
$u^2<(1+\varepsilon\zeta)/\varepsilon^2$, the characteristic boundary conditions 
are of the form 
\begin{equation}\label{eq:3p1} 
\begin{aligned} 
& \varepsilon u(0,t) + 2\sqrt{1+\varepsilon\zeta(0,t)} = \varepsilon u_0 + 
2\sqrt{1+\varepsilon\zeta_0}, \\ 
& \varepsilon u(1,t) - 2\sqrt{1+\varepsilon\zeta(1,t)} = \varepsilon u_0 - 
2\sqrt{1+\varepsilon\zeta_0}. 
\end{aligned} 
\end{equation} 
Here it is assumed that outside the interval $[0,1]$ the flow is uniform and 
satisfies $\zeta(x,t)=\zeta_0$, $u(x,t)=u_0$, where $\eta_0$, $u_0$ are 
constants such that $u_0^2<(1+\varepsilon\zeta_0)/\varepsilon^2$. In addition, 
the initial conditions $\zeta(x,0)$, $u(x,0)$, of \eqref{eq:SW} should satisfy 
the subcriticality conditions and be compatible at $x=0$ and $x=1$ with the 
uniform flow outside $[0,1]$. In \cite{AD} two of the authors analyzed the space 
discretization of \eqref{eq:SW} with characteristic boundary conditions (both in 
the subcritical and supercritical case) using Galerkin finite element methods. 
Analytical and computational evidence in \cite{AD} suggests that the discretized 
characteristic boundary conditions, although not exactly transparent, are 
nevertheless highly absorbent. We note that the same type of characterisic 
absorbing conditions may be used for the \eqref{eq:SW} over a variable bottom, 
at least in the case where the bottom is locally horizontal at the endpoints 
cf.\ e.g.\ \cite{KD} and its references. 

Finding (exact) transparent boundary conditions for the \eqref{eq:CB} is not 
easy, as a nonlocal problem should be solved for this nonlinear system. In 
practice, for small $\mu$, it is reasonable to assume that the Riemann 
invariants do not change much over short distances along the characteristics, 
and, consequently, to pose the b.c.\ \eqref{eq:3p1} as approximate, absorbing 
b.c.'s for \eqref{eq:CB} as well. This has been widely done in practice, for 
example in numerical simulations of the Serre equations cf.\ e.g.\ \cite{CBM2}, 
\cite{BCLMT}; in \cite{DM} the related problem of deriving one-way 
approximations of the Serre equations is discussed. Our aim in this subsection 
is to assess, by numerical experiment, the accuracy of \eqref{eq:3p1} as 
approximate absorbing boundary conditions for the \eqref{eq:CB}, paying special 
attentions to their efficacy in simulating outgoing {\em solitary-wave} 
solutions of the \eqref{eq:CB}. 

In order to derive (classical) solitary-wave solutions of \eqref{eq:CB} on the 
real line, we let $\zeta=\zeta_s(x-c_st)$, $u=u_s(x-c_st)$, where $c_s$ is the 
speed of the solitary wave and $\zeta_s(\xi)$, $u_s(\xi)$ are smooth functions 
that tend to zero, along with their derivatives, as $|\xi|\to\infty$. Inserting 
these expressions in \eqref{eq:CB} and integrating we see that the equations for 
$\eta_s$ and $u_s$ decouple and give 
\begin{equation}\label{eq:3p2} 
\zeta_s=\frac{u_s}{c_s-\varepsilon u_s},\quad \frac{c_s\mu}{3}u''_s + \frac 
\varepsilon 2 u_s^2 - c_su_s + \frac{u_s}{c_s-\varepsilon u_s} = 0, 
\end{equation} 
A further integration yields that $u_s$ satisfies the ode 
\begin{equation}\label{eq:3p3} 
\frac{c_s\mu}{6}(u_s')^2 + \frac \varepsilon 6 u_s^3 - \frac{c_s}{2}u_s^2 - 
\frac 1 \varepsilon u_s - \frac{c_s}{\varepsilon^2}\ln \frac{c_s-\varepsilon 
u_s}{c_s} = 0. 
\end{equation} 
It is straightforward to see that $\zeta_s$ and $u_s$ have a single positive 
maximum at some point $\xi_0$ (we assume that $\xi_0=0$). Denoting 
$A=\max\zeta_s$, $B=\max u_s$, we get 
\begin{equation}\label{eq:3p4} 
A=\frac{B}{c_s-\varepsilon B},\quad \varepsilon B^3-\frac{c_s}{2}B^2 -\frac 1 
\varepsilon B - \frac{c_s}{\varepsilon^2}\ln\left(\frac{c_s-\varepsilon 
B}{c_s}\right) = 0, 
\end{equation} 
from which one may compute the speed-amplitude relation 
\begin{equation}\label{eq:3p5} 
c_s = \frac{\sqrt 6(1+\varepsilon A)}{\sqrt{3+2\varepsilon A}} 
\frac{\sqrt{(1+\varepsilon A)\ln(1+\varepsilon A) - \varepsilon A}}{\varepsilon 
A}. 
\end{equation} 
For fixed $\varepsilon$, $c_s$ is monotonically increasing with $A_s$ but stays 
below the  straight line $c_s=1+\frac{\varepsilon A}{2}$, which is the 
speed-amplitude relation of the solitary waves of the Serre equations. (The 
formulas \eqref{eq:3p2}--\eqref{eq:3p5} were derived in \cite{AD1} in the case 
of the unscaled \eqref{eq:CB}. Note that there are some typographical errors in 
\cite{AD1}: In equation (1.58) of \cite{AD1} the last term in the left-hand 
side of the equation should have the sign $+$, while in the equation preceding 
\eqref{eq:3p2} in \cite{AD1} the third term in the left-hand side should have 
the sign $+$ and the last term the sign $-$. However formulae (3.1) and (3.2) 
of \cite{AD1}, which are the analogous of \eqref{eq:3p5} and \eqref{eq:3p4} 
above, are correct.) 

When $\varepsilon A_s$ is not large, i.e.\ when \eqref{eq:CB} is a valid model 
for surface waves, it may be seen by \eqref{eq:3p5} and also by numerical 
simulations that the solitary-wave solutions of \eqref{eq:CB} satisfy the 
subcriticality condition. (Since there is no closed-form formula for the 
solitary waves we generate them numerically by solving for given $c_s$ the 
nonlinear o.d.e.\ \eqref{eq:3p2} that $u_s$ satisfies, taking zero boundary 
conditions for $u_s$ and $u_s'$ at the endpoints of a large enough spatial 
interval using the routine {\tt bvp4c} of \cite{MATLAB:2018}.) 

In the numerical experiments to be described in the sequel we solved the 
\eqref{eq:SW} and the \eqref{eq:CB}, unless otherwise specified, on the spatial 
interval $[0,50]$ using cubic splines on a uniform mesh with $h=0.025$, coupled 
with RK4 time stepping with time step satisfying $\frac k h = \frac 1 2$, up to 
$T=50$. 

We set the stage by solving numerically the \eqref{eq:SW} with $\varepsilon=1$ 
with the b.c.\ \eqref{eq:3p1}, posed now at the endpoints of $x=0$ and $x=50$. 
As initial condition we take the solitary wave of \eqref{eq:CB} with 
$\mu=\varepsilon=1$ of speed $c_s=1.18112$, centered at $x=25$, which we 
multiply by a factor $0.1$ (thus it is no longer a solitary wave), so that no 
discontinuities develop in its evolution under \eqref{eq:SW} for the duration of 
the experiment. As expected, the initial single-hump wave is split in two 
pulses: a larger one of amplitude of about $0.04$ traveling to the right with a 
speed of about $1.057$ and which starts exiting the computational interval at 
$x=50$ at about $t=22.5$, (the exit is completed by about $t=30$), and a smaller 
one of amplitude of about $0.0035$ that travels to the left with speed $1.005$ 
and exits the interval at $x=0$ at about $t=24.5$. 

In Figure \ref{fig:2} we present some graphs that are relevant for assessing the 
accuracy of the absorbing b.c.'s for this example. (All graphs refer to 
$\zeta$.) In Fig.~\ref{fig:2a} we observe the temporal variation of the 
wavefield at $x=40$. 
\begin{figure}[htbp] 
\centering 
\begin{subfigure}{.49\textwidth} 
\centering 
\includegraphics[width=\textwidth]{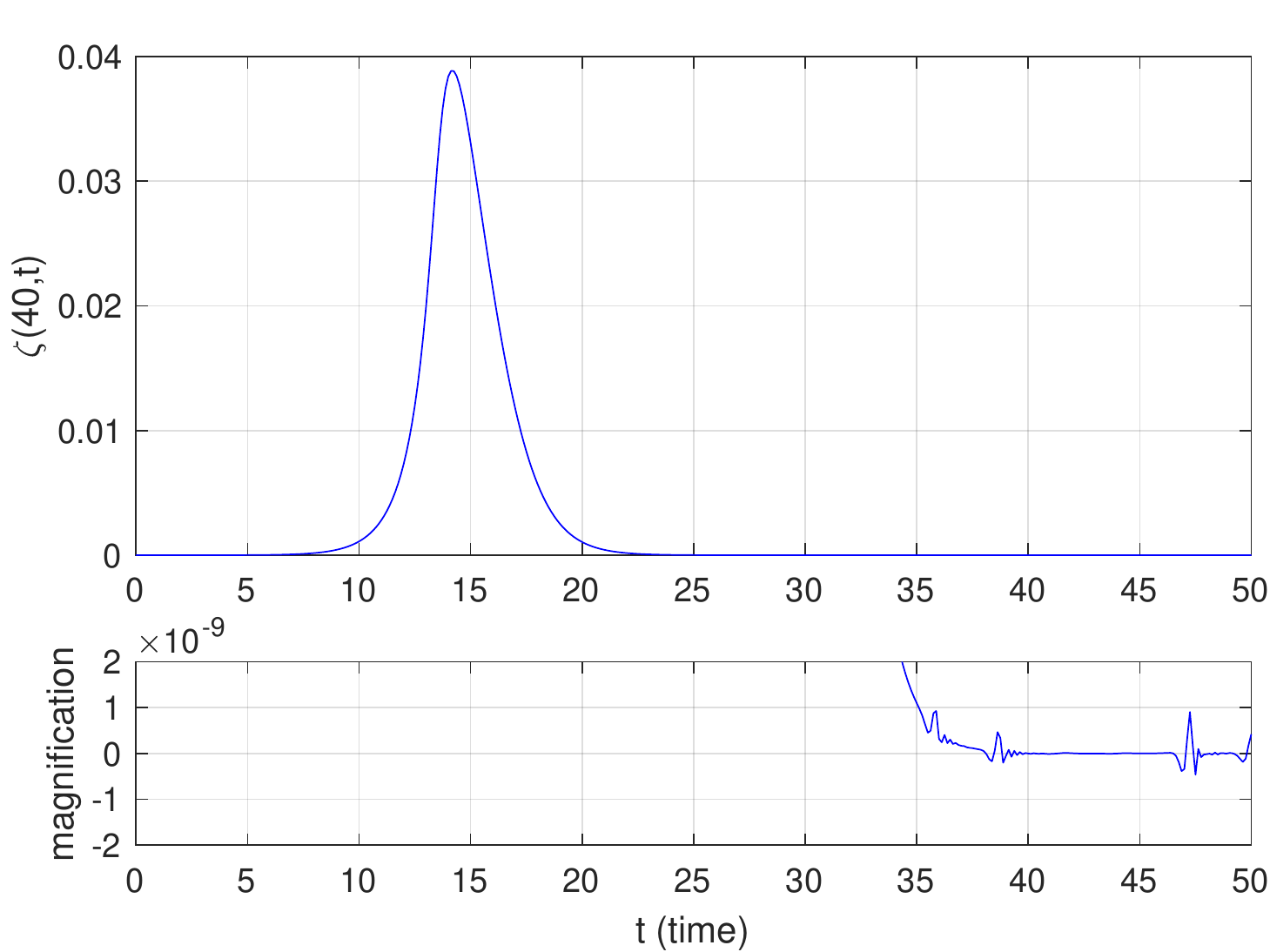}
\subcaption{\label{fig:2a}}
\end{subfigure}
\begin{subfigure}{.49\textwidth} 
\centering 
\includegraphics[width=\textwidth]{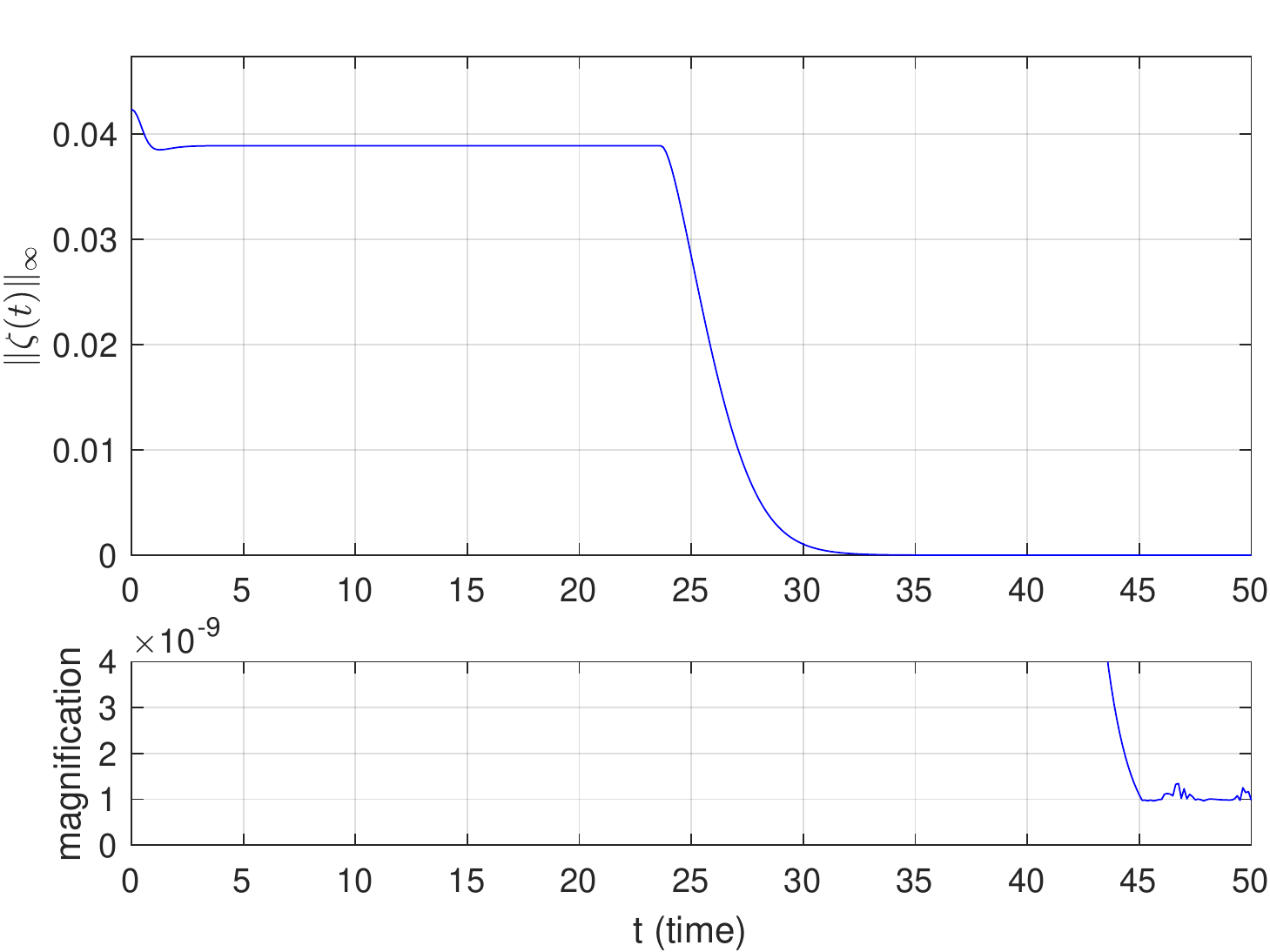}
\subcaption{\label{fig:2b}}
\end{subfigure}\\ 
\begin{subfigure}{.49\textwidth} 
\centering 
\includegraphics[width=\textwidth]{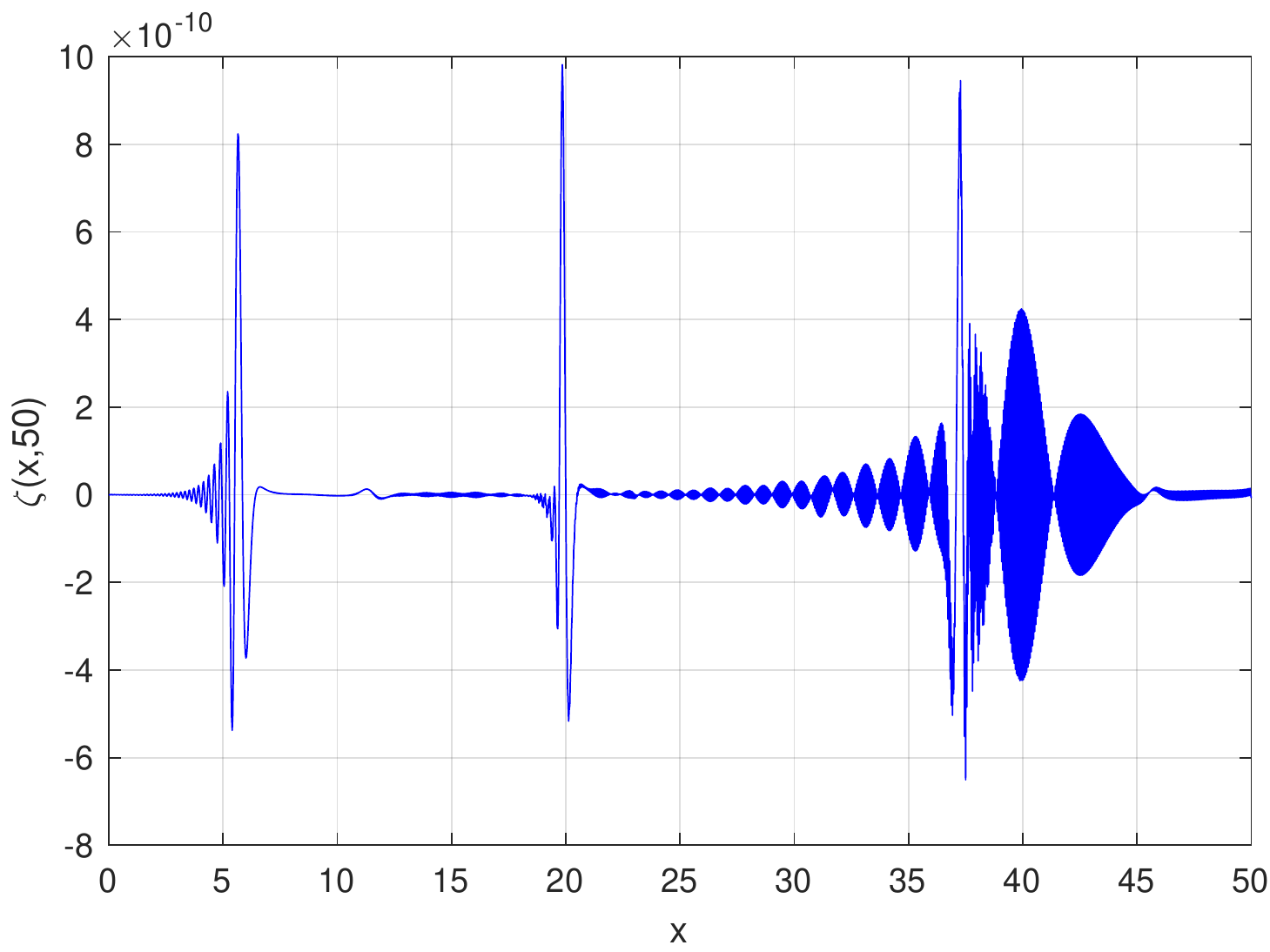}
\subcaption{\label{fig:2c}}
\end{subfigure}
\caption{Accuracy of the numerical characteristic b.c.'s for the \eqref{eq:SW}, 
$\varepsilon=1$, \subref{fig:2a}: $\zeta(40,t)$ with magnification underneath, 
\subref{fig:2b}: $\max_x\zeta(x,t)$ with magnification underneath, 
\subref{fig:2c}: Magnification of $\zeta(x,50)$ \label{fig:2}} 
\end{figure} 
The pulse that travels to the right passes this gauge and exits the interval. 
What remains after $t\simeq 30$ is a small residual consisting of 
small-amplitude oscillations reflected from the boundary due to the inexactness 
of the discretized b.c.'s and shown in the magnification of \ref{fig:2a} to be 
of $\mathcal O(10^{-9})$. In \ref{fig:2b} we show the maximum amplitude of 
$\zeta$ with respect to $x$ over the whole interval as a function of $t$, while 
\ref{fig:2c} shows the small oscillations still present in the computational 
interval at the end of the experiment ($t=50$). The are all of magnitude at most 
$10^{-9}$ and consist of a main wavepacket of high frequency and amplitude of 
about $4\times 10^{-10}$ centered at about $x=40$ and moving to the right, and 
three larger amplitude `thin' wavetrains of small support centered at about 
$x=5$ (moving to the right), $x=20$ (moving to the left) and $x=37.5$ (moving to 
the left), respectively. The main oscillatory wavepacket is produced when the 
right-traveling  pulse exits the boundary at $x=50$. This wavepacket moves to 
the left with speed equal to about $7$ and has undergone three reflections at 
the boundary by $T=50$. The thinner wavetrains (of speed about $1$) are 
generated by the interaction of this wavepacket with the boundaries (The 
left-traveling pulse produced by the splitting of the initial condition 
produces, when it hits the boundary at $x=0$, artificial reflections with 
amplitude well below $10^{-10}$.) 

In Figure \ref{fig:3}, resp.\ \ref{fig:4}, we show analogous graphs in the case 
of the \eqref{eq:CB} system in the cases $\varepsilon=\mu=0.1$, resp.\ 
$\varepsilon=\mu=0.01$. As initial condition we took now the exact solitary-wave 
profile of \eqref{eq:CB} for these values of $\varepsilon$, $\mu$, and of speed 
$c_s=1.18112$. As a consequence, the wave moves to the right without changing 
its shape. The fact that the characteristic b.c.'s are no longer exactly 
transparent for the continuous system is manifested by the larger magnitudes of 
the residual artificial oscillations, which are now of $\mathcal O(10^{-3})$, 
resp.\ $\mathcal O(10^{-4})$. (Note their dispersive character in the larger 
$\mu$ case, Fig.~\ref{fig:2c}.)  
\begin{figure}[htbp] 
\centering 
\begin{subfigure}{.49\textwidth} 
\centering 
\includegraphics[width=\textwidth]{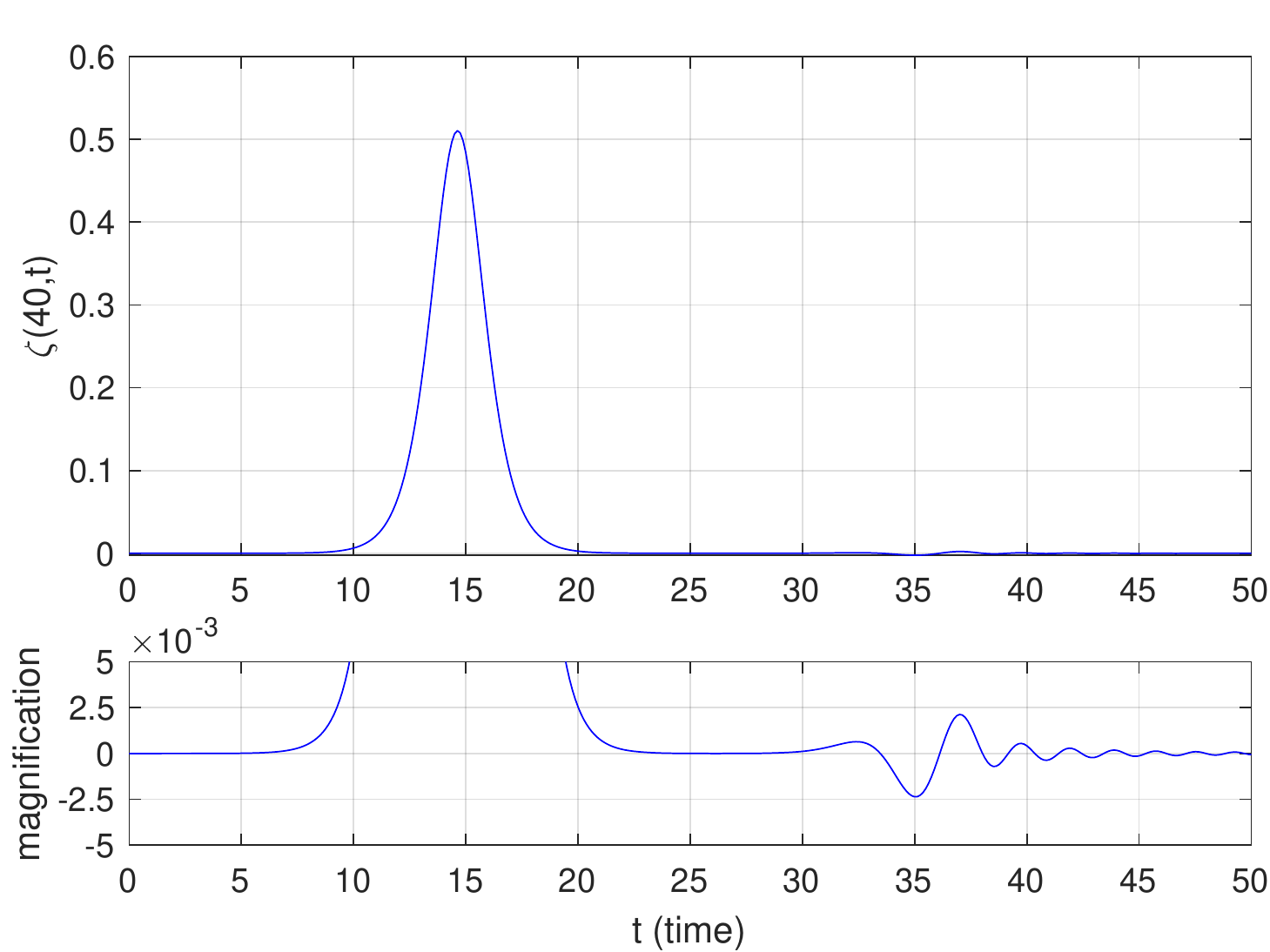}
\subcaption{\label{fig:3a}}
\end{subfigure}
\begin{subfigure}{.49\textwidth} 
\centering 
\includegraphics[width=\textwidth]{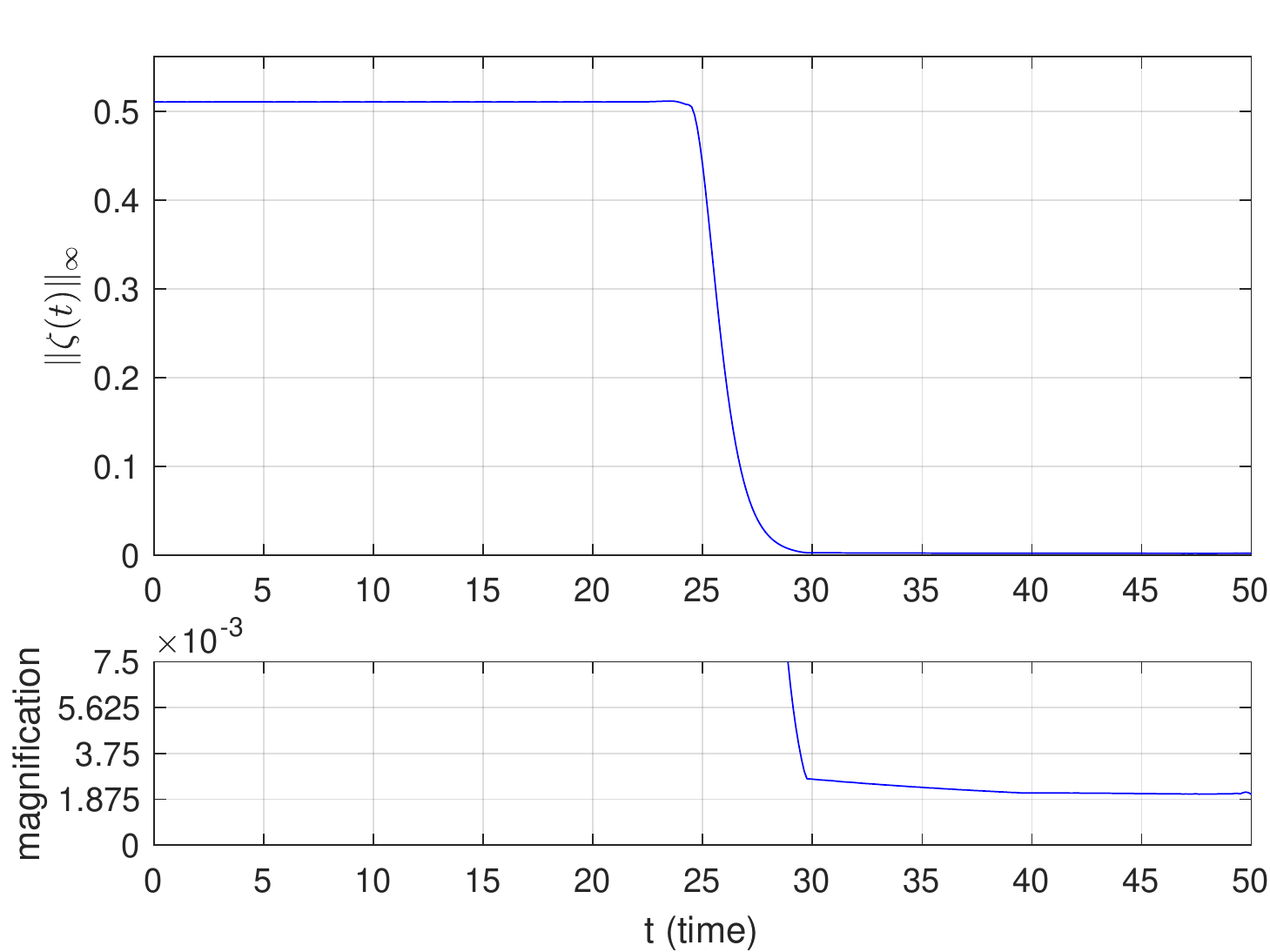}
\subcaption{\label{fig:3b}}
\end{subfigure}\\ 
\begin{subfigure}{.49\textwidth} 
\centering 
\includegraphics[width=\textwidth]{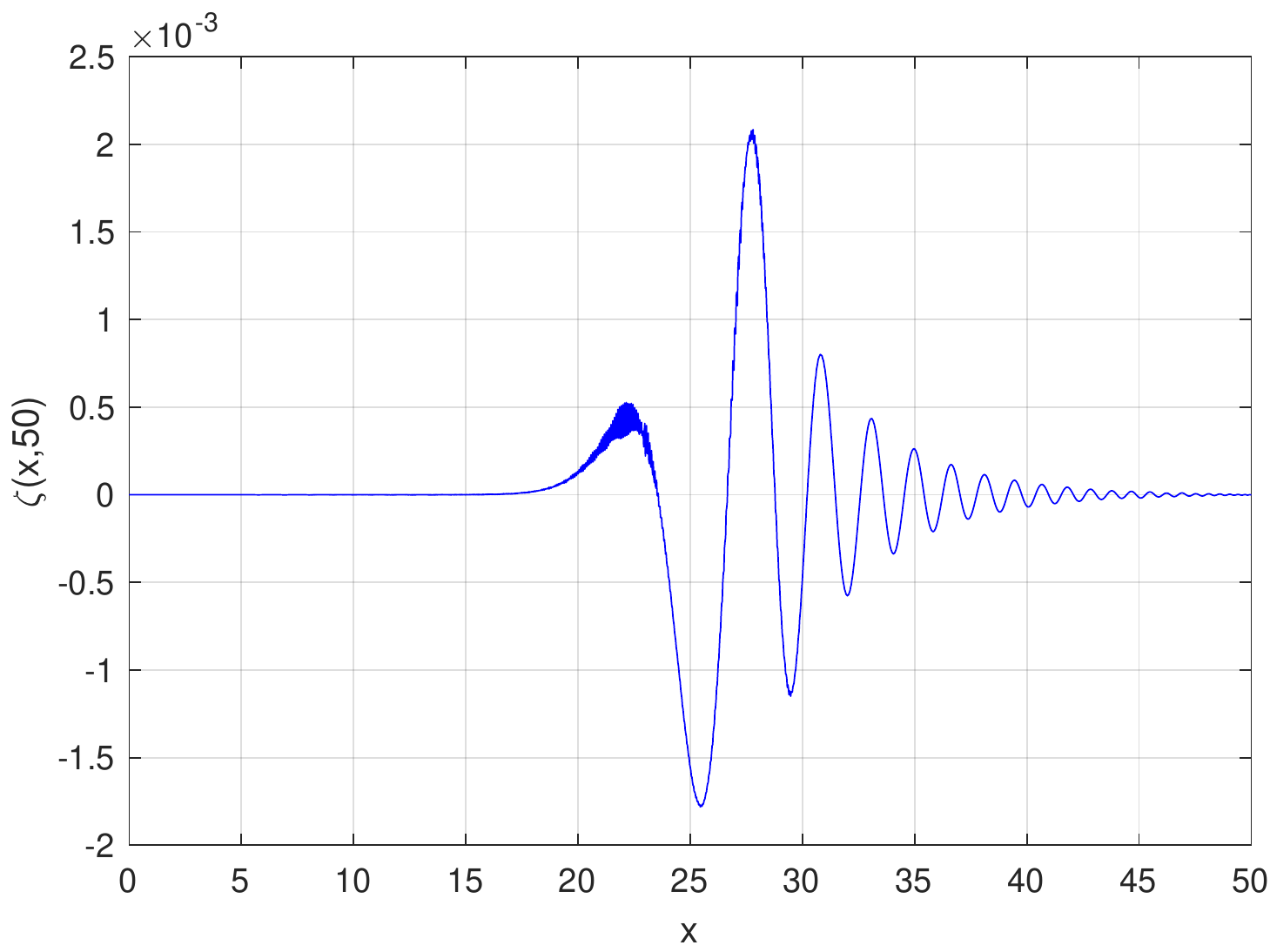}
\subcaption{\label{fig:3c}}
\end{subfigure}
\caption{Accuracy of the numerical characteristic b.c.'s for the \eqref{eq:CB}, 
$\varepsilon=\mu=0.1$, \subref{fig:3a}: $\zeta(40,t)$ with magnification 
underneath, \subref{fig:3b}: $\max_x\zeta(x,t)$ with magnification underneath, 
\subref{fig:3c}: Magnification of $\zeta(x,50)$ \label{fig:3}} 
\end{figure}
\begin{figure}[htbp] 
\centering 
\begin{subfigure}{.49\textwidth} 
\centering 
\includegraphics[width=\textwidth]{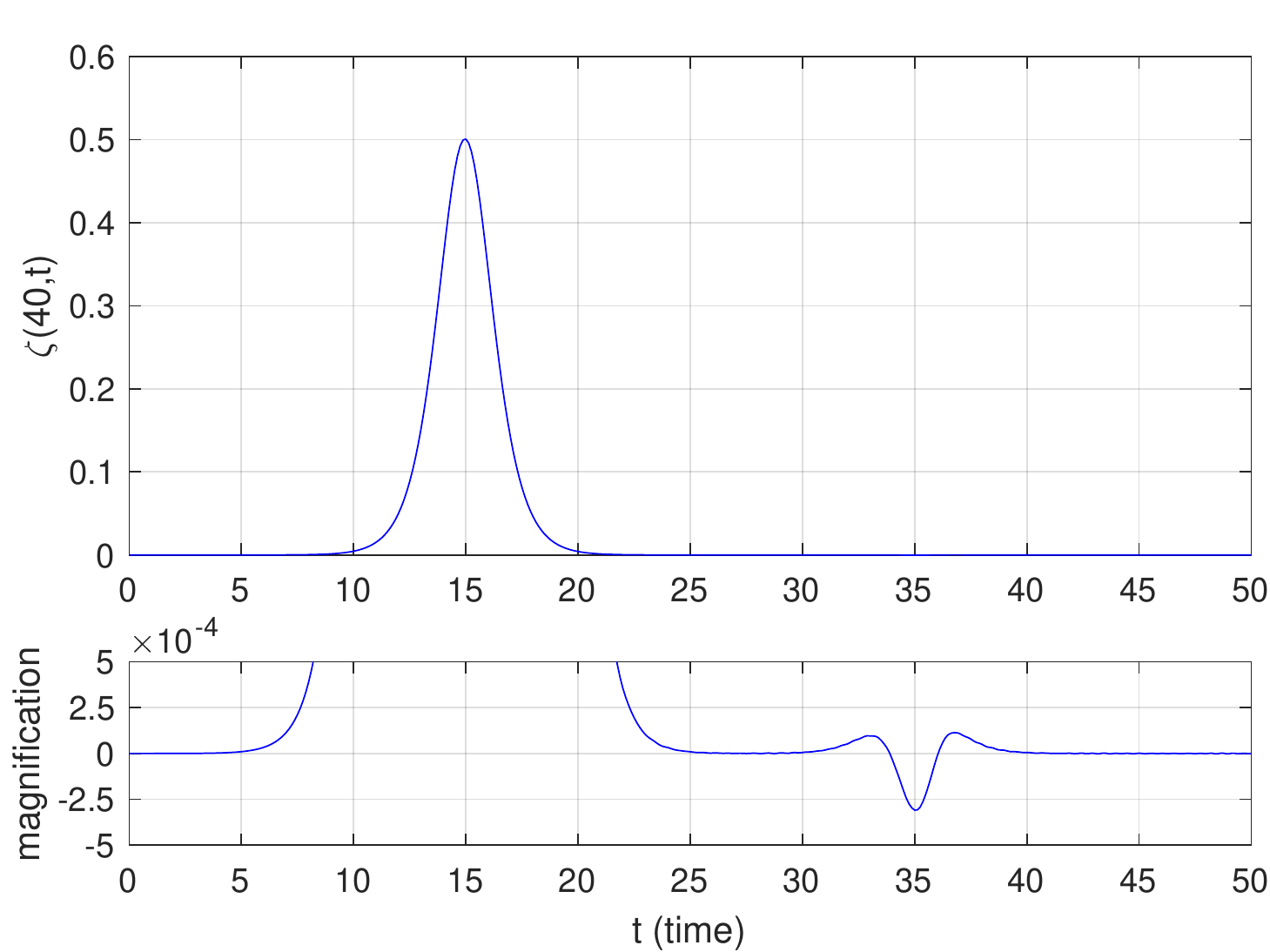} 
\subcaption{\label{fig:4a}}
\end{subfigure}
\begin{subfigure}{.49\textwidth} 
\centering 
\includegraphics[width=\textwidth]{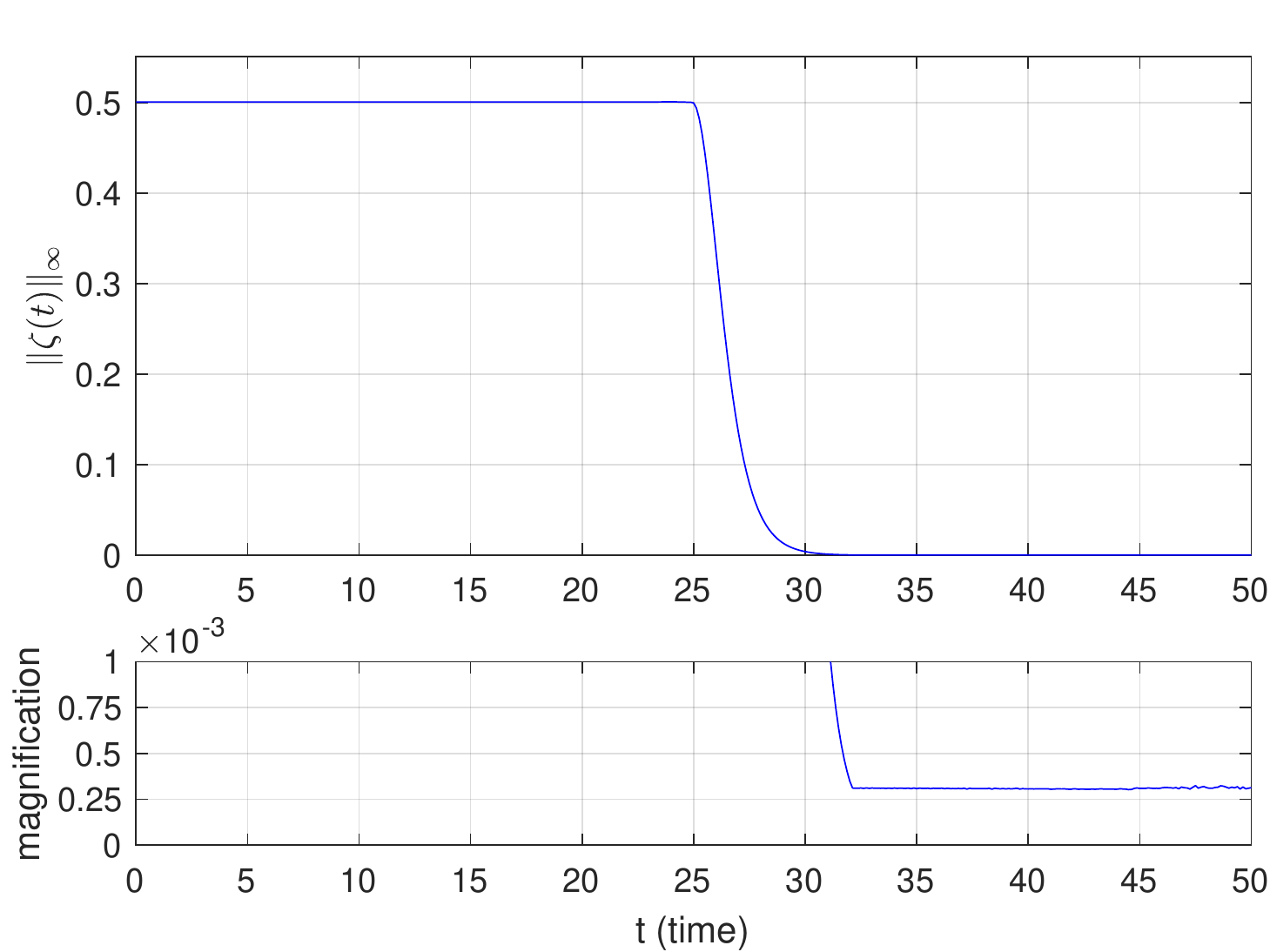} 
\subcaption{\label{fig:4b}}
\end{subfigure}\\ 
\begin{subfigure}{.49\textwidth} 
\centering 
\includegraphics[width=\textwidth]{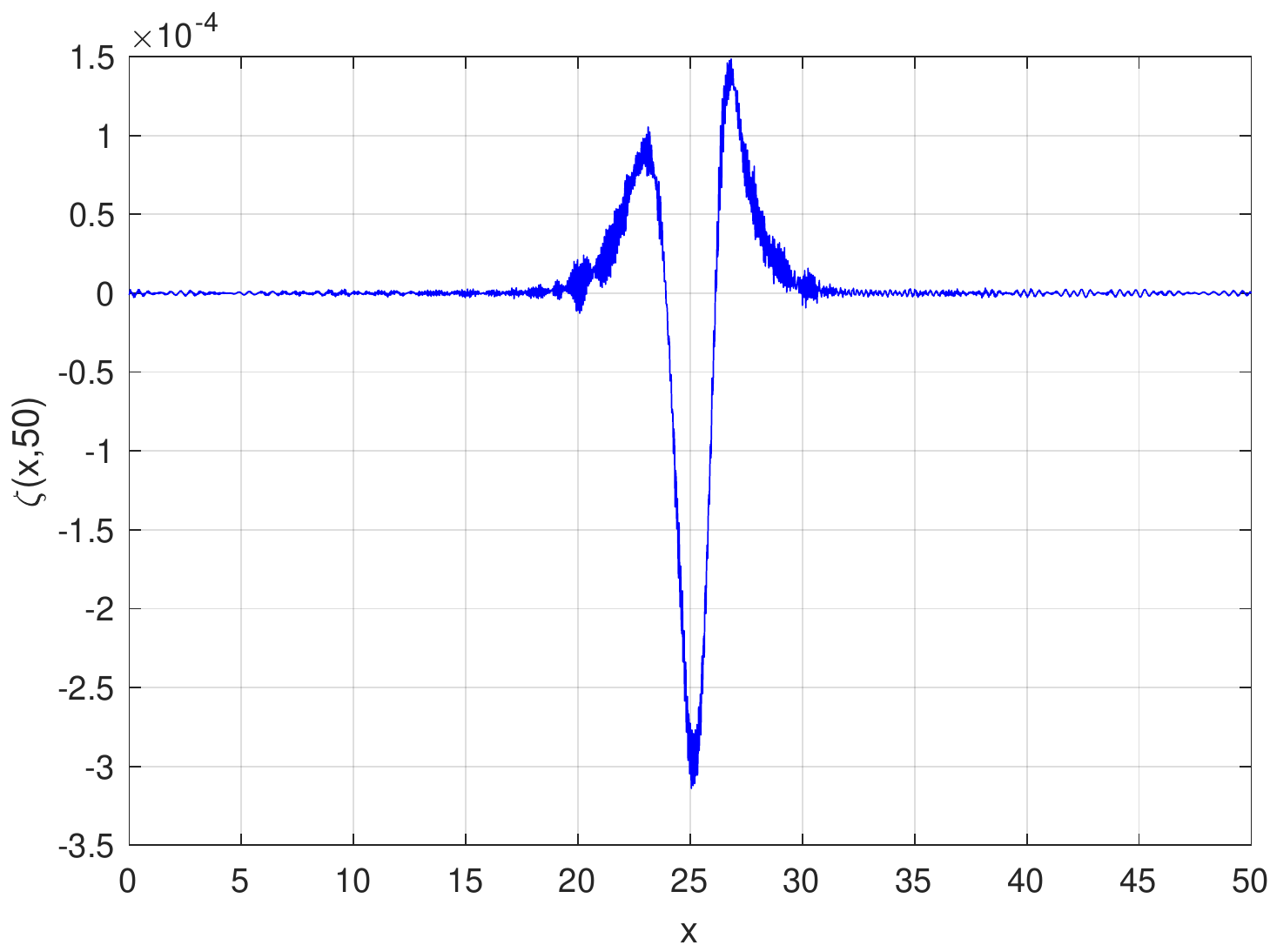} 
\subcaption{\label{fig:4c}}
\end{subfigure}
\caption{Accuracy of the numerical characteristic b.c.'s for the \eqref{eq:CB}, 
$\varepsilon=\mu=0.01$, \subref{fig:4a}: $\zeta(40,t)$ with magnification 
underneath, \subref{fig:4b}: $\max_x\zeta(x,t)$ with magnification underneath, 
\subref{fig:4c}: Magnification of $\zeta(x,50)$ \label{fig:4}} 
\end{figure}
The main pulse in graph (c) of Figures \ref{fig:3} and \ref{fig:4} is due to the 
modelling, i.e.\ the approximate character of the characteristic b.c.'s, while 
the superimposed noise in Fig.\ \ref{fig:4c} disappears as $h$ is decreased. The 
amplitude of the residual was equal to about $2.1\times 10^{-3}$ for 
$\varepsilon=\mu=0.1$ and fell to $3.2\times 10^{-4}$ for 
$\varepsilon=\mu=0.01$, and to $3.3\times 10^{-5}$ for $\varepsilon=\mu=0.001$ 
(figure not shown). We thus observe that it decreases linearly with $\mu$ when 
$\varepsilon=\mu$. As expected, for fixed $\varepsilon$ we observed that this 
amplitude decreased with $\mu$. For example, for $\varepsilon=0.01$ and 
$\mu=10^{-3}$ it was equal to about $3.6\times 10^{-5}$, for $\mu=10^{-4}$ it 
was of $\mathcal O(10^{-6})$. 

Our conclusion is that for small $\varepsilon=\mu$, i.e.\ when the \eqref{eq:CB} 
is a valid model, the (approximate) characteristic b.c.'s for the \eqref{eq:CB} 
are satisfactorily absorbing. We extended these b.c.'s in the case of the 
variable bottom models \eqref{eq:CBw} and \eqref{eq:CBs} and used them in 
numerical experiments with these systems that will be reported in the next 
subsection.

\subsection{Propagation of solitary waves over a variable bottom} 
\label{subsec:3p3} 

In this subsection we present the results of several numerical experiments we 
performed with the variable-bottom models \eqref{eq:CBw} and \eqref{eq:CBs} in 
order to validate the numerical methods used for their solution, compare the two 
models, and compare the results of \eqref{eq:CBs} with those obtained by the 
Serre-Green-Naghdi system and with experimental measurements. We mainly use test 
problems already considered in the literature, whose main theme is the study of 
the changes that solitary-wave pulses undergo when propagating over an uneven 
bottom.

\subsubsection{Solitary waves on a sloping beach} 
\label{subsubsec:3p3p1} 

We first consider the problem of a solitary wave climbing a sloping beach of 
mild slope that was studied by Peregrine in his pioneering study \cite{P}, in 
which he derived the \eqref{eq:CBs} system and solved it numerically by a finite 
differene scheme. In our experiments we used the \eqref{eq:CBs} in unscaled, 
nondimensional variables (i.e.\ setting $\varepsilon=\mu=1$) and solved it with 
our fully discrete scheme using cubic splines on a uniform mesh with $N=2000$ 
spatial intervals and $M=2N$ temporal steps. Following \cite{P} we consider, 
using out notation, a bottom of uniform slope $\alpha>0$ given by $\eta_b(x) = 
\alpha x$ on a spatial interval of the form $[0,L_\alpha]$. As initial condition 
we take as in \cite{P} a solitary wave of the form 
\begin{equation}\label{eq:3p6} 
\zeta_0(x) = a_0\operatorname{sech}^2\left[\tfrac 1 2 \sqrt{3a_0}(x-x_0)\right], 
\end{equation} 
where $x_0=1/\alpha$. This is a solitary wave of the KdV type equation 
$\zeta_t+\zeta_x + \frac 3 2 \zeta\zeta_x + \frac 1 6 \zeta_{xxx}=0$ with speed 
$c_s=1+a_0/2$. The KdV equation in this form is obtained as a one-way 
approximation of the \eqref{eq:CB} with $\varepsilon=\mu=1$ in the standard 
manner, cf.\ \cite{W}. The particular solitary wave \eqref{eq:3p6} is centered 
at $x_0=1/\alpha$, where the (undisturbed) water depth is equal to one. The 
initial velocity of the pulse, found by inserting \eqref{eq:3p6} in the 
continuity equation, is given by 
\begin{equation}\label{eq:3p7} 
u_0(x) = \frac{-\left(1+\frac 1 2 a_0\right)\zeta_0(x)}{\alpha x+\zeta_0(x)}. 
\end{equation} 
Thus the initial condition \eqref{eq:3p6}--\eqref{eq:3p7} is not an exact 
solitary-wave solution of \eqref{eq:CB} but a close approximation thereof. We 
took an interval of length $L_\alpha = 1/\alpha+20$ to ensure that the support 
of the initial pulse was well within the spatial interval of integration. At 
$x=0$ we used the b.c.\ $u=0$ (which produced no reflections as the wave did 
not reach the left boundary within the temporal range of the experiment), posed 
absorbing (characteristic) boundary conditions at $x=L_\alpha$, and ran the 
experiment up to $t=25$. 

During this temporal interval the wave moves to the left, steepens (wave 
`shoaling') and grows in amplitude; its evolution resembles that of Fig.\ 1 of 
\cite{P}, which corresponds to $\alpha=1/30$, $a_0=0.1$. We compared our 
numerical results with those of the finite-difference scheme of Peregrine (given 
in the Appendix of \cite{P}) that we implemented. (Note that there is a misprint 
in the last equation of this scheme in \cite{P}: In the discretization of the 
term $\eta_bu_x$, the denominator should be $4\Delta x$.) We observed that the 
maximum discrepancy in the amplitude of $\zeta$ approximated by the two methods 
occurred at $t=25$ where the values were $0.14100$ for our scheme and $0.13634$ 
for the scheme of \cite{P} (implemented with $\Delta x=\Delta t=0.1$), which 
corresponds to a difference of about $3.4\%$ (Fig.\ 1 of \cite{P} shows a 
$\zeta$-amplitude of about $0.15$ at $t=25$ which does not correspond to the 
actual numerical results that the scheme of \cite{P} gives and is probably due 
to some inaccuracy in the graphics.) 

We also repeated with our scheme the numerical experiments leading to Fig.\ 2 of 
\cite{P} that depicts the change of amplitude of the solitary wave with depth 
for various values of $a_0$ in the case of a beach of slope $\alpha=1/20$. There 
was good agreement for low values of $a_0$; however the values given in \cite{P} 
for $a_0=0.2$ seem too high as the depth approaches $0.4$. (All the amplitudes 
computed by our scheme stay below the curve of Green's law for depths larger 
than $0.5$.) 

As the solitary wave climbs the sloping beach a small-amplitude flat wave of 
elevation is reflected backwards due to the presence of the sloping bed. The 
results of our computations agree with those of Fig.\ 3 in \cite{P}. Peregrine, 
op.\ cit., derives an approximate expression for the amplitude of the reflected 
wave of the form 
\begin{equation}\label{eq:3p8} 
\zeta_{\max,\text{refl}} \simeq \frac 1 2 \alpha \left(\frac  1 3 
a_0\right)^\frac 1 2, 
\end{equation} 
using characteristic variables for the linearized shallow water equations. We 
found quite a good agreement between our numerical results and the values 
computed by \eqref{eq:3p8}. For example, for $\alpha=1/40$, $a_0=0.1$, our 
computations gave $\zeta_{\max,\text{refl}}=0.0023$, while \eqref{eq:3p8} gives 
$0.0025$. We will return to the reflections due to the uneven bottom in 
subsection~\ref{subsubsec:3p3p3} in the sequel. 

As was previously mentioned, we used the approximate characteristic boundary 
conditions discussed in subsection \ref{subsec:3p2} at the right-hand boundary 
$x=L_\alpha$. We found that the b.c.\ also works for a sloping bottom provided 
the length of the domain is taken sufficiently large so that the artificial 
oscillations created at the boundary do not interfere as they travel to the left 
with the reflected wave due to the slope. As an example we consider a beach of 
slope $\alpha=1/40$ on the spatial interval $[0,70]$. As initial condition we 
took $\zeta(x,0)=\zeta_0(x)$ given by \eqref{eq:3p6} with $x_0=40$, $a_0=0.1$, 
and $u(x,0)=0$, i.e.\ a `heap' of water, so that sizeable pulses are generated 
and propagate in both directions. 
\begin{figure}[htbp] 
\centering 
\begin{tikzpicture}\small 
\begin{axis}[
width=.95\textwidth, height=.4*.95\textwidth, 
mark=none, line width=0.5pt, 
grid=major, 
enlargelimits=false, ymin=-1e-3, ymax=4e-3, ytick distance=1e-3, 
xticklabel={\pgfmathprintnumber[assume math mode=true]{\tick}}, 
yticklabel={\pgfmathprintnumber[assume math mode=true]{\tick}}, 
xlabel={$x$},ylabel={$\zeta$}
]

\addplot[color=blue] 
table[x={xn}, y={eta}] 
{figs/peregrine_fnl_abs_a_40_a0_p1_xN_70_T_50_u0_0_magnif.txt}; 

\addplot[color=red, style=dotted] 
coordinates {(20,5.095294e-04) (70,5.095294e-04)}; 

\addplot[color=black, style=dotted] 
coordinates {(20,0) (70,0)}; 

\draw[<-] (28,.6e-3) -- (29.75,1.85e-3) node[above] 
{\setlength{\fboxsep}{0pt}\colorbox{white}{\parbox{10em}{ Reflection from 
the\\[-1.5pt] left-traveling pulse due\\[-1.5pt] to the sloping bottom}}}; 

\draw[->] (51,1.3e-3) -- (44,1.3e-3); 
\node[above] at (46.5,1.85e-3) 
{\setlength{\fboxsep}{0pt}\colorbox{white}{\parbox{9em}{Artificial 
reflection\\[-1.5pt] from right-travelling\\[-1.5pt] pulse (travelling left)}}}; 

\draw[->] (61.5,1.1e-3) -- (67.5,1.1e-3); 
\node[above] at (62.1,1.4e-3) 
{\setlength{\fboxsep}{0pt}\colorbox{white}{\parbox{9em}{Dispersive tail 
of\\[-1.5pt] right-travelling pulse\\[-1.5pt] about to reach bndry}}}; 

\end{axis}
\end{tikzpicture}
\caption{Magnification of $\zeta$ reflections near the right boundary, $t=50$ 
\label{fig:5}}
\end{figure}
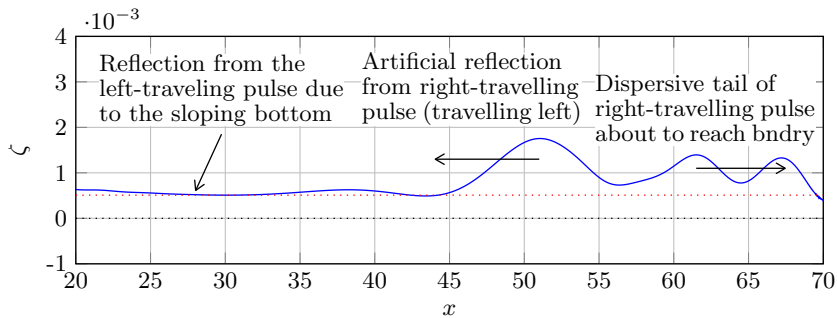 
Figure \ref{fig:5} shows a magnified profile of the surface elevation $\zeta$ as 
a function of $x$ in the interval $[20,70]$ at $t=50$, by which time the 
right-travelling pulse has left the domain. In the interval $[20,45]$ we observe 
the small-amplitude (of height approximately $5\times 10^{-4}$) reflection due 
to interaction of the left-travelling pulse with the sloping bottom. In the 
interval $[45,60]$ we observe the artificial oscillations reflected from the 
right-hand boundary at $x=70$ due to the approximate absorbing b.c.\ after the 
exit of the main right-travelling pulse. The ratio of the amplitude of the 
artificial reflection to that of the main pulse is about $4\%$. Finally, one may 
also observe on the extreme right the dispersive-tail oscillations that follow 
the main right-travelling pulse as they exit the domain.

\subsubsection{Transformation of a solitary wave propagating onto a shelf} 
\label{subsubsec:3p3p2} 

We next consider in detail an example of the transformation that a solitary wave 
undergoes as it propagates over a bottom of shelf type like the one shown in 
Figure \ref{fig:6}. This test problem was considered by Madsen and Mei in 
\cite{MM}. In this subsection we work in dimensionless, unscaled variables with 
$\varepsilon=\mu=1$. 

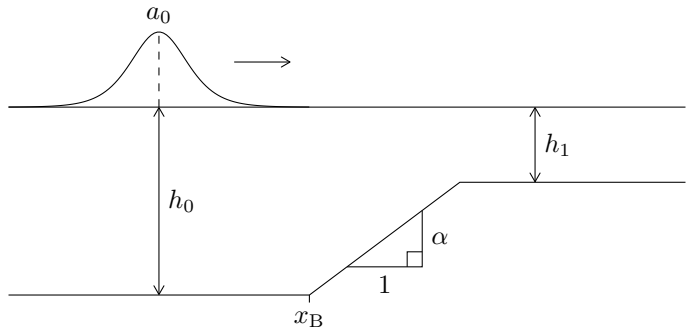
\begin{figure}[htbp] 
\centering 
\begin{tikzpicture} 
\draw (-2,0) -- (7,0); 

\draw (-2,-2.5) -- (2,-2.5) -- (4,-1) -- (7,-1); 
\draw (2,-2.5) -- (2,-2.6) node[below] {$x_\text{B}$}; 

\draw[<->] (0,-2.5) -- (0,0); 
\node[right] at (0,-1.25) {$h_0$}; 
\draw[dashed] (0,0) -- (0,1); 
\node[above] at (0,1) {$a_0$}; 

\draw[<->] (5,-1) -- (5,0); 
\node[right] at (5,-.5) {$h_1$}; 

\draw[domain=-2:2, samples=100] plot(\x,{1/cosh(\x*2)^2}); 
\draw[->] (1,.6) -- (1.75,.6); 

\draw (2+.5,-2.5+.75*.5) -- (2+1.5,-2.5+.75*.5) -- (2+1.5,-2.5+.75*1.5); 
\draw (2+1.5-.2,-2.5+.75*.5) -- (2+1.5-.2,-2.5+.75*.5+.2) -- 
(2+1.5,-2.5+.75*.5+.2); 
\node[below] at (3,-2.5+.75*.5) {$1$}; 
\node[right] at (3.5,-2.5+.75) {$\alpha$}; 
\end{tikzpicture} 
\caption{Solitary wave propagating onto a shelf \label{fig:6}} 
\end{figure}

The initial elevation of the solitary wave is given again by \eqref{eq:3p6}, in 
which $x_0$ is taken far enough from the toe of the sloping part of the bottom 
at $x=x_\text{B}$, so that $\zeta_0(x_\text{B})/a_0\ll 1$. The initial velocity 
is found again from the continuity equation but is now computed for a bottom of 
constant depth $h_0=1$, i.e.\ as 
\begin{equation}\label{eq:3p9} 
u_0(x) = \frac{\left(1+\tfrac 1 2 a_0\right)\zeta_0(x)}{1+\zeta_0(x)}. 
\end{equation} 
The solitary wave travels to the right, changes in amplitude and shape as it 
climbs the slope, and resolves itself into a sequence of solitary-wave pulses as 
it travels on the shelf of uniform depth $h_1<1$, cf.\ Figure~\ref{fig:9}. 

In \cite{MM} the pde model used was a Boussinesq system of KdV-BBM type with 
variable-bottom terms originally derived in \cite{MLeM}, and which, in the case 
of horizontal bottom, is locally well-posed, cf.\ \cite{BCS2}. The initial-value 
problem was integrated with a type of a method of `characteristics'. 
In order to form some idea of the proximity of the model used in \cite{MM} to 
\eqref{eq:CBs} we integrated both systems using our fully discrete scheme with 
cubic splines and RK4 time stepping over a variable bottom domain like that of 
Figure \ref{fig:6} with $0\leq x\leq 150$, $x_B=60$, $h_1=0.5$, $\alpha=1/20$. 
As initial values we took solitary waves of the respective systems of the same 
amplitude $a_0=0.12$ and centered at $x_0=30$. (Their speeds are very close but 
the wavelength of the solitary wave of the system of \cite{MM} was about $22\%$ 
larger. The difference of the two-solitary waves in $L^2$ was about $4.37\times 
10^{-2}$.) At the end of the computational domain at $t=22.5$, when both waves 
had climbed well onto the shelf and resolved themselves into two solitary waves 
plus dispersive tail, the two wavetrains had an $L^2$ distance of $5.53\times 
10^{-2}$, while the leading solitary waves had a difference in amplitude of 
about $3\times 10^{-3}$ and a phase difference (distance of positions of the 
crest) of $0.15$. We conclude that in the time scales of this and similar 
experiments typical solutions of the two systems stay close to each other, so 
that it is fair to compare in a general way the results of numerical experiments 
in \cite{MM} with similar ones that we ran with \eqref{eq:CBs} to be described 
in the sequel. 

We first make some quantitative remarks on the transformation of the solitary 
wave as it climbs on the sloping part of the bottom in Figure~\ref{fig:6}. As 
observed in subsection~\ref{subsubsec:3p3p1}, the amplitude of the solitary wave 
increases as the depth of the water decreases. In order to 
quantify this increase in the case of \eqref{eq:CBs} and our numerical method, 
and motivated by analogous experiments in \cite{MM}, we took $h_1=0.1$, 
$\alpha=1/20$, $0\leq x\leq 150$, $x_\text{B}=60$, and computed with cubic 
splines, $N=3000$, $M=2N$, the evolution (according to \eqref{eq:CBs}) of a 
solitary wave of \eqref{eq:CB} centered at $x=30$. We recorded the variation of 
the normalized amplitude $\zeta_{\max}/a_0$ of the solitary wave as a function 
of the water depth $\eta_b$ for various values of the initial amplitude $a_0$. 
In Figure~\ref{fig:7} we show the 
\begin{figure}[htbp] 
\centering 
\includegraphics[width=\textwidth]{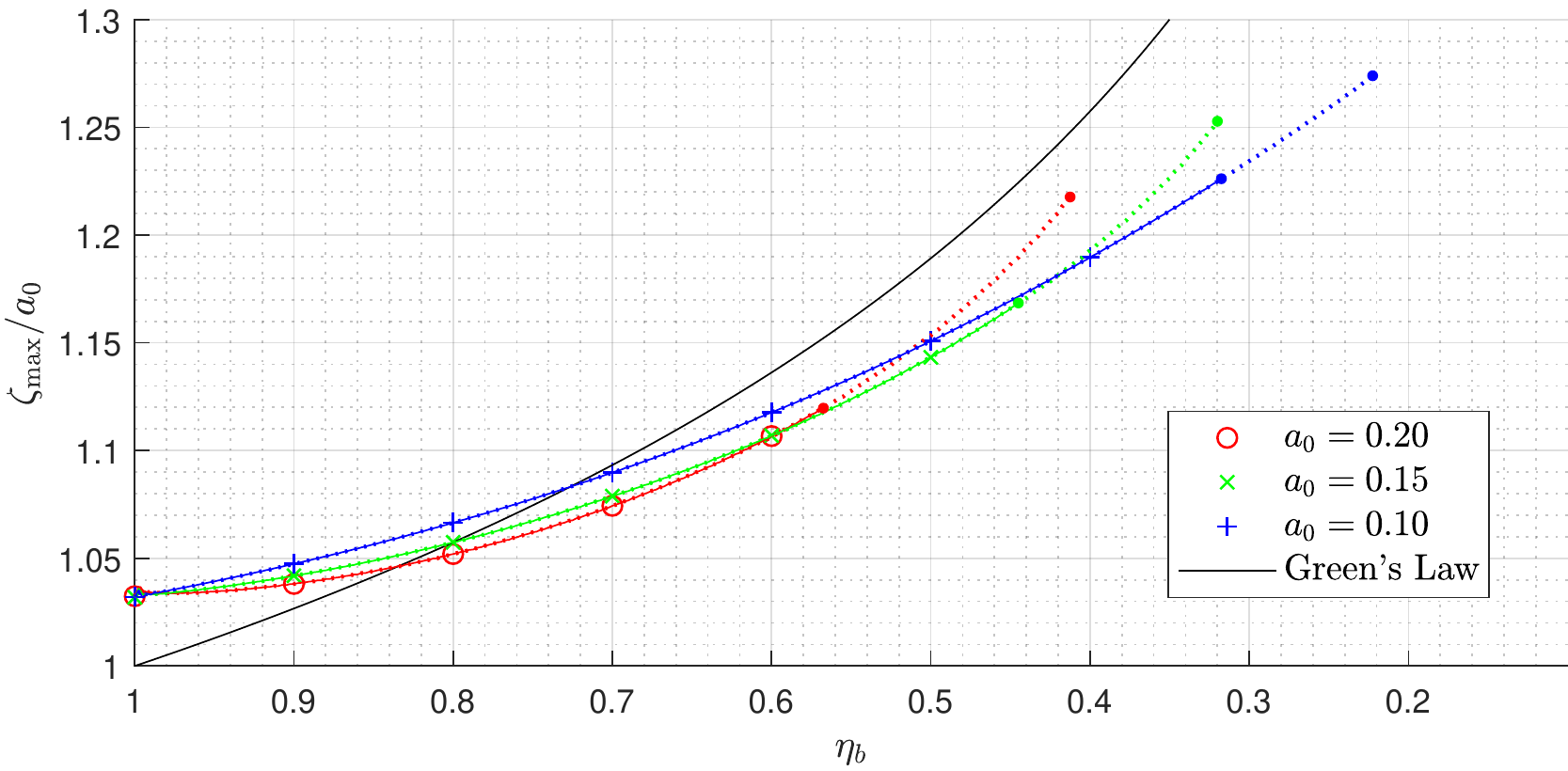}
\caption{Amplitude variation with depth for beach slope $\alpha=1/20$ for 
$a_0=0.2,\ 0.15,\ 0.1$. Computation stopping criteria: solid lines, 
$\max_x(\zeta(x,t)/\eta_b(x))<0.4$; dotted lines, 
$\max(\zeta(x,t)/\eta_b(x))<0.6$. \label{fig:7}} 
\end{figure} 
outcome of these numerical experiments corresponding to solitary waves of 
initial amplitudes $a_0=0.1$, $0.15$ and $0.2$. (The graph starts when the crest 
of the solitary wave is at $x=x_\text{B}$. At that point $\zeta_b=1$, but the 
forward point of the solitary wave is already travelling on the sloping bed; 
hence, the corresponding value of $\zeta_{\max}/a_0$ is about $1.034$ and not 1. 
For $\eta_b$ larger than about $0.6$ the three curves corresponding to the three 
amplitudes chosen are quite close to each other with the lowest initial 
amplitude $a_0=0.10$ giving the highest values of $\zeta_{\max}/a_0$. For 
$\eta_b$ smaller than about $0.5$ the sequence is reversed with the highest 
$a_0=0.2$ giving the highest $\zeta_{\max}/a_0$ values. The initial solid-line 
part of the three curves represents the values of $\zeta_{\max}/a_0$ up to the 
point where $\max_x\left(\tfrac{\zeta(x,t)}{\eta_b(x)}\right)=0.4$, which is 
probably a large upper bound of the range of validity of \eqref{eq:CBs}, while 
the dotted-line extensions of the curves go up to 
$\max_x\left(\frac{\zeta(x,t)}{\eta_b(x)}\right)=0.6$, which is probably beyond 
that range. We also show the curve of Green's law given by 
$\zeta_{\max}/a_0=\eta_b^{-1/4}$ for comparison purposes. It is to be noted that 
our results are in satisfactory agreement with those of the corresponding Fig.~3 
of \cite{MM} for values of $\eta_b$ in the range $1$ to $0.75$. 

These results are supplemented by those of Figure \ref{fig:8} in which we record 
the variation of $\zeta_{\max}/a_0$ as a function of $\eta_b$ for a solitary 
wave of fixed $a_0=0.1$ and slopes equal to $0.023$, $0.05$, and $0.065$. For 
$\eta_b$ larger than about $0.65$ 
\begin{figure}[htbp] 
\centering 
\includegraphics[width=\textwidth]{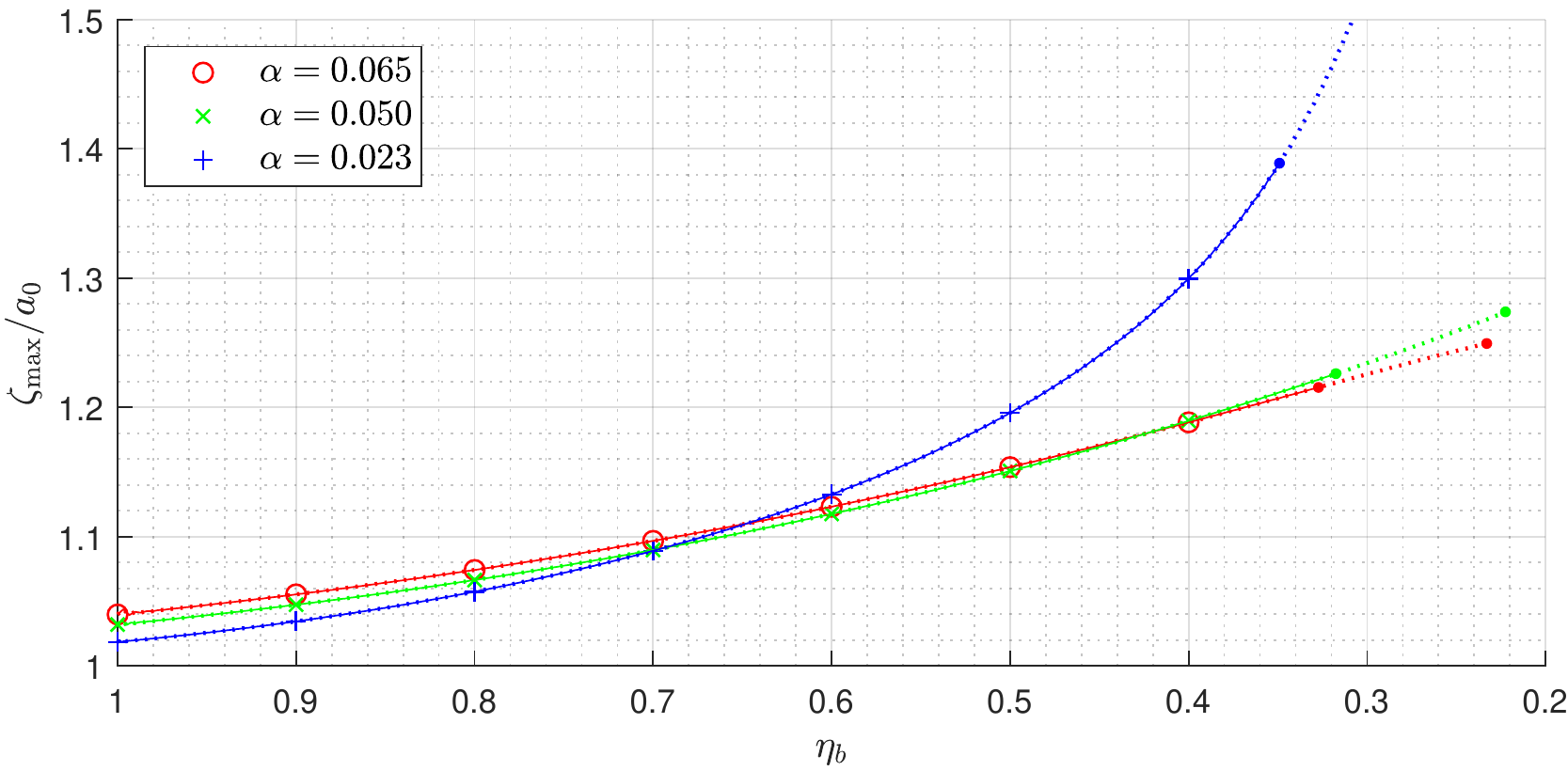}
\caption{Amplitude variation with depth for initial amplitude $a_0=0.1$ for 
slopes $\alpha=0.065,\ 0.05,\ 0.023$. Computation stopping criteria: solid 
lines, $\max_x(\zeta(x,t)/\eta_b(x))<0.4$; dotted lines, 
$\max_x(\zeta(x,t)/\eta_b(x))<0.6$. \label{fig:8}} 
\end{figure} 
all curves are fairly close to each other with the steeper slopes giving 
slightly higher values of $\zeta_{\max}/a_0$. For values of $\eta_b$ less than 
about $0.65$ the smaller slope gives the highest ratio $\zeta_{\max}/a_0$ while 
the two other curves remain close together (stopping criteria as in Figure 
\ref{fig:6}). A qualitatively similar behavior is observed in the analogous 
Figure 4 of \cite{MM}. 

The distortion the solitary wave suffers as it travels upslope causes the wave, 
when it reenters a horizontal-bottom region reaching the shelf, to resolve 
itself into a sequence of solitary waves followed by dispersive oscillations. 
This phenomenon was noticed in \cite{MM} for the model used in that paper, and 
is also present in our case of the \eqref{eq:CBs} system as well. In Figure 
\ref{fig:9} we show this phenomenon, which may be viewed as a manifestation of 
the stability 
\begin{figure}[htbp] 
\centering 
\includegraphics[width=\textwidth]{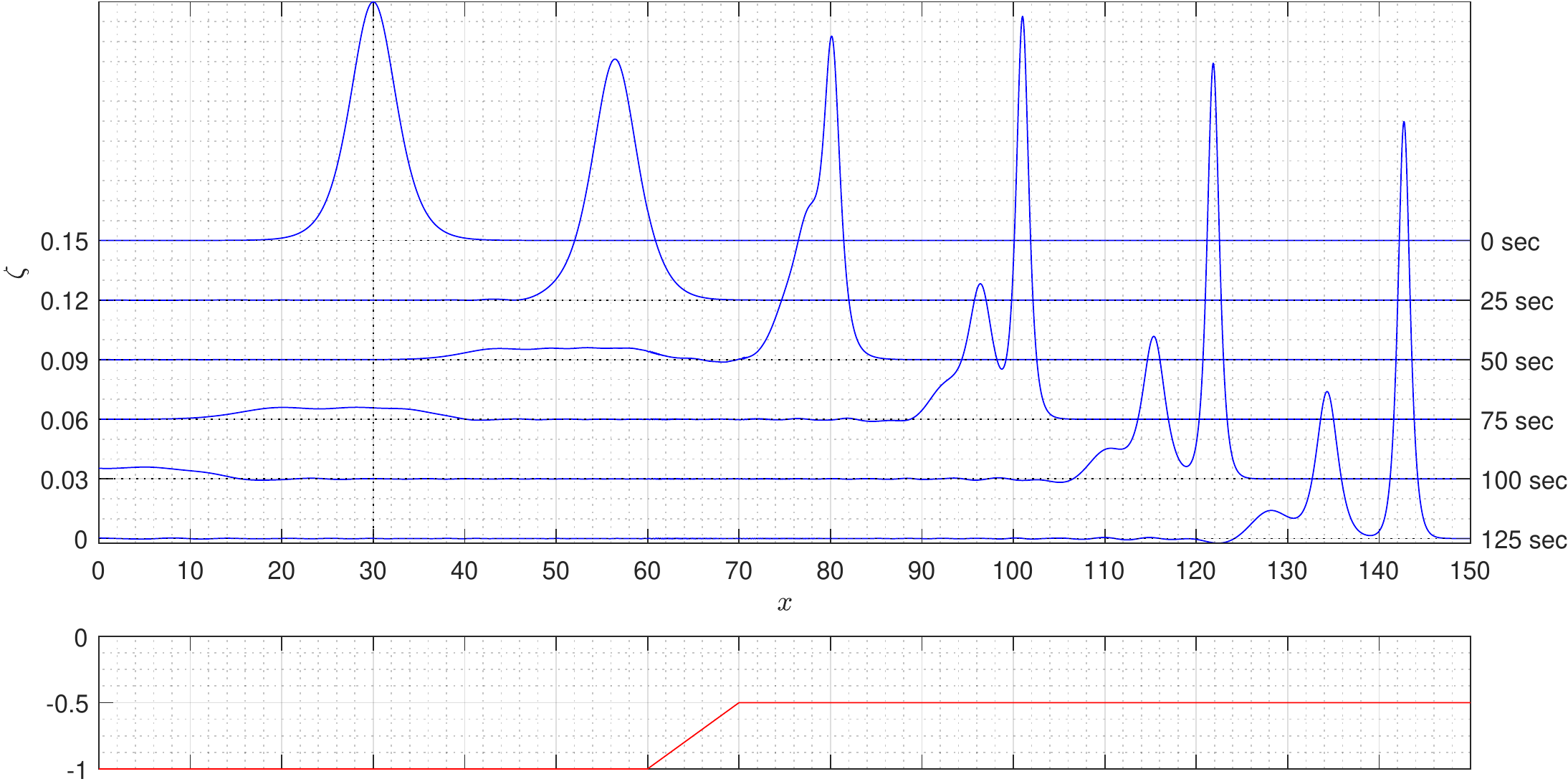} 
\caption{Transformation of a CB solitary wave ($a_0=0.12$) propagating up a slope 
of $\alpha=1/20$, onto a shelf of smaller depth, $h_1=0.5 h_0$. \label{fig:9}} 
\end{figure} 
of solitary waves of \eqref{eq:CB}. We took a spatial interval $[0,150]$, 
$h_1=0.5$, $x_B=60$, $\alpha=1/20$, and considered the evolution of a solitary 
wave of initial amplitude $a_0=0.12$. The graphs in Figure~\ref{fig:9} show the 
temporal evolution every $25$ temporal units (``seconds''). The solitary wave 
distorts as it climbs the sloping part of the bottom (depicted in the lower part 
of he graph), increases in amplitude, and by $t=125$ it has resolved itself into 
two solitary waves (a third is also possibly forming) plus a dispersive tail. 
The first solitary wave has an amplitude of about $0.2099$ and travels at a 
speed of about $0.84$. (We checked that it is indeed a CB-solitary wave.) This 
wavetrain is followed by the usual for upsloping environments flat reflection 
wave that travels to the left. The results of a similar experiment in \cite{MM} 
are qualitatively the same.

\subsubsection{Reflection and dispersion from various types of variable bottom} 
\label{subsubsec:3p3p3} 

As already mentioned in subsection~\ref{subsubsec:3p3p1}, when a solitary wave 
propagates up a sloping bottom, a small-amplitude, flat wave of elevation is 
generated by reflection from the uneven bottom and travels in the opposite 
direction. This phenomenon has been shown e.g.\ in Figs \ref{fig:5} and 
\ref{fig:9}. (In this subsection we work again in dimensionless, unscaled 
variables with $\varepsilon=\mu=1$.) Using characteristic variables theory for 
the linearized shallow water equations, in addition to the approximate formula 
\eqref{eq:3p8} for the reflected wave, Peregrine predicted in \cite{P} that the 
reflected wave will have a wavelength of about $2L$ if the slope occurs over a 
horizontal interval of length $L$. In order to check these results we integrated 
the \eqref{eq:CBs} over the variable bottom shown in the lower graph of 
Figure~\ref{fig:9} with an initial solitary wave of \eqref{eq:CB}, varying the 
slope and the initial amplitude $a_0$  of the wave; we present the results in 
Table~\ref{tab:3} that shows the amplitudes and wavelengths of the reflected 
wave predicted in \cite{P} and the numerical results given by our code. (Due to 
the shape of the reflected wave we measured its length by the formula 
$\frac{1}{|I|}\int_I\zeta\,\mathrm dx$, where $I=\{x:\zeta>0.8\,\zeta_{\max}\}$, 
at a short time after the full reflected wave had formed. In the case 
$\alpha=1/40$, $a_0=0.18$, we took $I=\{x:\zeta>0.6\,\zeta_{\max}\}$.) We 
conclude that 
\begin{table}[!htbp] 
\centering
\begin{tabular}{|*{6}{c|}} \hline
$\alpha$ & $a_0$ & $L$ & \parbox{4.5em}{refl.\ ampl.\\[-1pt] by \eqref{eq:3p8}} 
& \parbox{4em}{reflected\\[-1pt] amplitude} & \parbox{5em}{reflected\\[-1pt] 
wavelength} \\ \hline
1/20 & .12 & 10 & 5.000e-3 & 5.578e-3 & 22.35 \\ \hline 
1/40 & .12 & 20 & 2.500e-3 & 2.875e-3 & 43.00 \\ \hline
1/20 & .18 & 10 & 6.124e-3 & 6.880e-3 & 21.25 \\ \hline
1/40 & .18 & 20 & 3.062e-3 & 3.451e-3 & 41.65 \\ \hline
\end{tabular} 
\caption{Predicted and numerical values of amplitude and wavelength of reflected 
wave. \label{tab:3}}
\end{table}
the predictions of \cite{P} underestimate by a small amount the actual numerical 
results. 

In \cite{P} Peregrine also made some qualitative comments about the type of 
reflected waves generated by various kinds of uneven bottoms. We verified his 
general statements by performing various numerical 
\begin{figure}[hptb] 
\centering 
\begin{subfigure}{.9\textwidth} 
\centering 
\includegraphics[width=\textwidth]{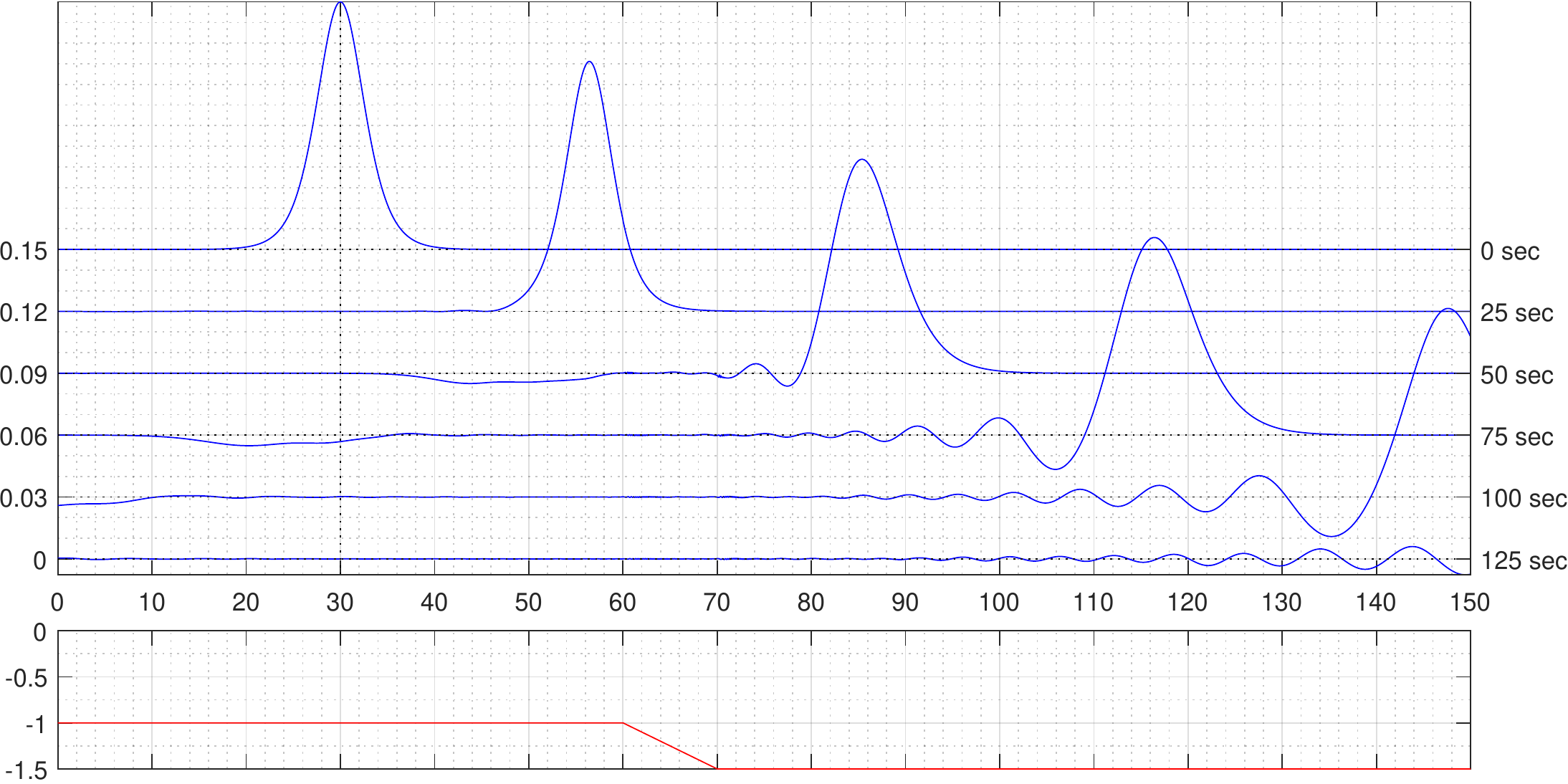}
\vspace{-4ex}
\subcaption{\label{fig:10a}}
\end{subfigure} 
\begin{subfigure}{.9\textwidth} 
\centering 
\includegraphics[width=\textwidth]{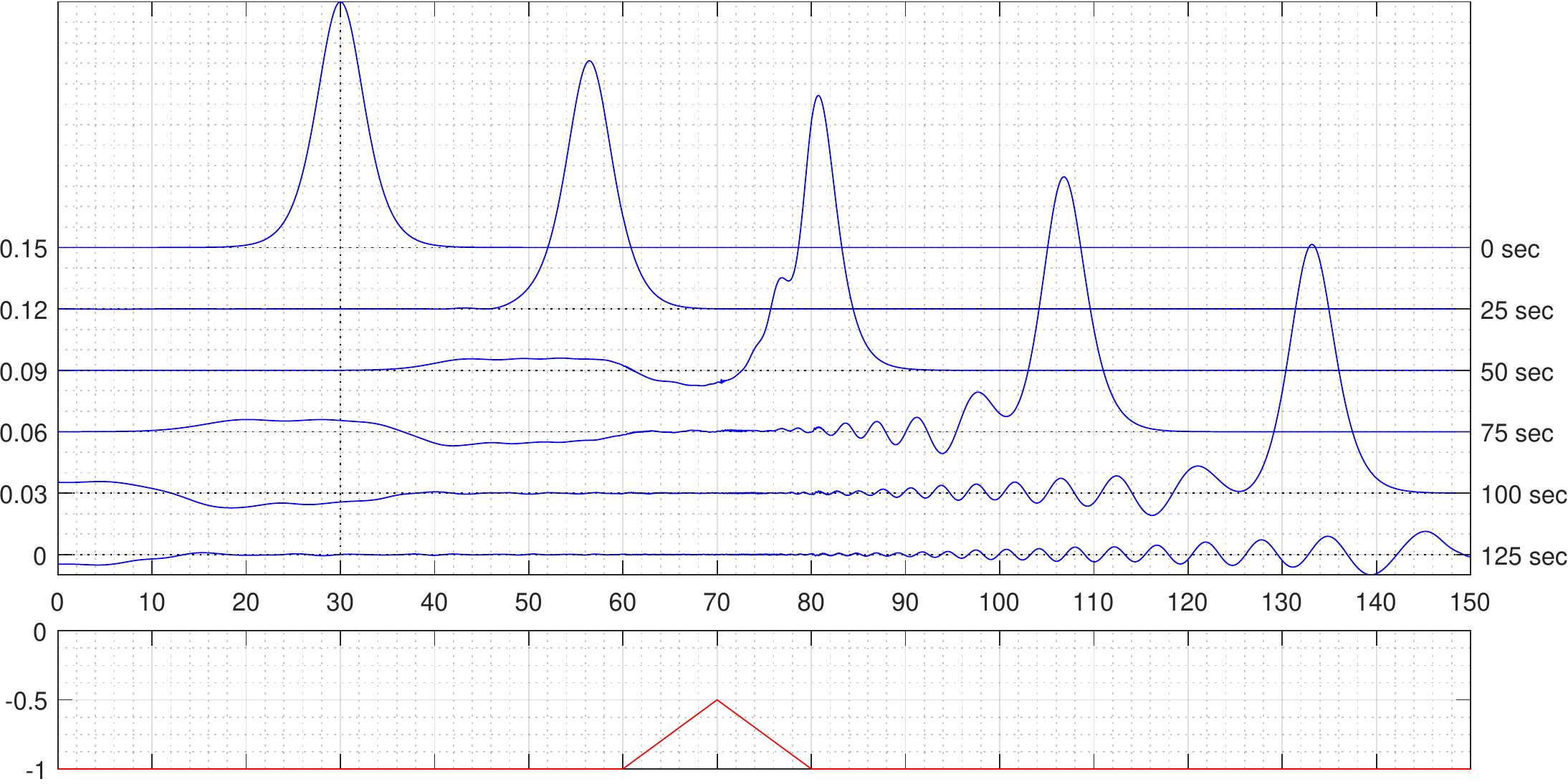}
\vspace{-4ex}
\subcaption{\label{fig:10b}}
\end{subfigure} 
\begin{subfigure}{.9\textwidth} 
\centering 
\includegraphics[width=\textwidth]{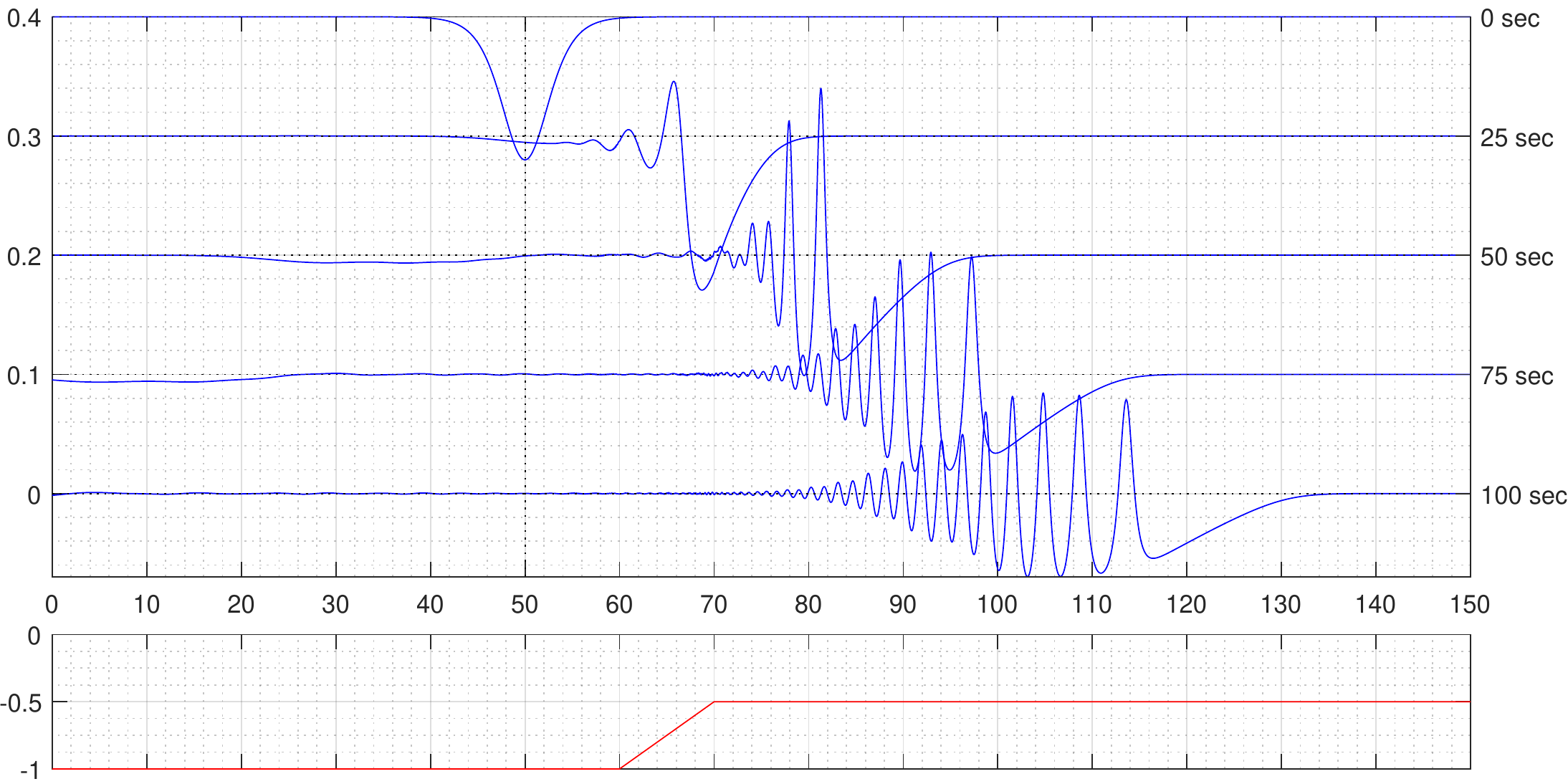}
\vspace{-4ex}
\subcaption{\label{fig:10c}}
\end{subfigure} 
\caption{Reflection due to sloping bottom, various topographies. $\zeta(x,t)$ as 
a function of $x$ at various $t$. \subref{fig:10a}:\ solitary wave travelling 
into deeper water, \subref{fig:10b}:\ solitary wave passing over a hump, 
\subref{fig:10c}:\ wave of depression travelling into shallower water. 
\label{fig:10}}
\end{figure} 
experiments, the results of some of which appear in Figure~\ref{fig:6}. In each 
case an initial wave, originally on a horizontal bottom, is let to evolve under 
\eqref{eq:CBs} and travel over uneven bottoms of various simple topographies 
shown in the lower graphs in Figure~\ref{fig:10}. Fig.~\ref{fig:10a} shows a CB 
solitary wave of amplitude $a_0=0.12$ passing into shallower water. The 
resulting reflected wave is a wave of depression; this solitary wave seems to be 
dispersing as a result of its interaction with the bottom. In the case of a hump 
(Fig.~\ref{fig:10b}) the same initial wave gives rise first to a reflected wave 
of elevation followed by a reflected wave of depression as one would expect. 
This particular perturbation due to this bottom topography seems to lead to a 
solitary wave very close to the initial one plus a trailing dispersive tail. 
Finally, an initial wave of depression climbing upslope gives rise to a 
reflected wave of depression and large-amplitude dispersive oscillations as it 
travels on the shelf.

\subsubsection{Comparison of (CBs) and (CBw) as the variation of the bottom 
increases} 
\label{subsubsec:3p3p4} 

As was mentioned in the Introduction \eqref{eq:CBs} is valid as a model for 
bottoms where topography, described by $\eta_b(x) = 1-\beta b(x)$, may vary 
arbitrarily (so that $\eta_b>0$ of course), i.e.\ where the parameter $\beta$ 
can be taken as an $\mathcal O(1)$ quantity, while \eqref{eq:CBw} was derived 
under the assumption that $\beta=\mathcal O(\varepsilon)$. In this subsection we 
suppose that the systems are written in scaled, dimensionless variables with 
$\mu=\varepsilon$ and we compare computationally the behavior of an initial CB 
solitary wave as it evolves according to each of the two systems travelling over 
a bottom of smooth topography with a fixed shelf-like function $b(x)$ and a 
parameter $\beta$ that varies from $\mathcal O(\varepsilon)$ to $\mathcal O(1)$, 
so that the bottom becomes steeper. 

For this purpose we solve both systems with our fully discrete scheme using 
cubic splines with uniform mesh, $N=2000$ and the RK4 with $M=2N$ on a spatial 
interval of $[0,140]$ with a CB solitary wave of amplitude $0.5$ as initial 
condition. (We experimented with several values of $\varepsilon=\mu$ but the 
results were qualitatively similar, so we show in Figure 
\ref{fig:11} below only the case $\varepsilon=\mu=0.05$.) 

As $b(x)$ we took a fixed profile given by 
\begin{equation}\label{eq:3p10} 
b(x) = \begin{cases} 
0, & x\in \left[0,L-\tfrac 3 2\right], \\ 
\tfrac 1 2 \left(1+\sin\left(\tfrac \pi 3 (x-L)\right)\right),& 
x\in\left[L-\tfrac 3 2, L+\tfrac 3 2\right], \\ 
1,& x\in\left[L+\tfrac 3 2, 140\right], 
\end{cases} 
\end{equation} 
with $L=70$. Thus $b$ is a $C^1$ nonnegative function that bridges $0$ and $1$ 
over an interval of length $3$. As a result, the undisturbed water depth 
$\eta_b$ will vary from $1$ to a shelf of depth $1-\beta$ smoothly over this 
interval. We consider three cases: $\beta=\varepsilon=0.05$, $\beta=0.4$, 
$\beta=0.6$, and present the results of the evolution for $0\leq t\leq 89$ in 
Figure~\ref{fig:11}. In Fig.~\ref{fig:11a}, where $\beta=\varepsilon=0.05$, 
there is, as expected, practically no difference between the two solitary waves 
that suffer only a very small perturbation due to the bottom. But for 
$\beta=\mathcal O(1)$, i.e.\ when the bottom is steeper, we observe in 
Figure~\ref{fig:11b} ($\beta=0.4$) and \ref{fig:11c} ($\beta=0.6$) large 
differences in the solutions of the two systems. As it travels on the shelf the 
solitary wave evolves under \eqref{eq:CBs} into a sequence of solitary waves as 
expected, whilst no such resolution is discernible in the case of the evolution 
under \eqref{eq:CBw} at least for the time frame of this experiment. Both 
systems produce he same small-amplitude reflection waves. Our conclusion is that 
for $\beta=\mathcal O(1)$ \eqref{eq:CBw} does not seem to give the correct 
longer-time behaviour of solutions in the case of strongly varying bottoms. 
\begin{figure}[hptb] 
\centering 
\begin{subfigure}{.9\textwidth} 
\centering 
\includegraphics[width=\textwidth]{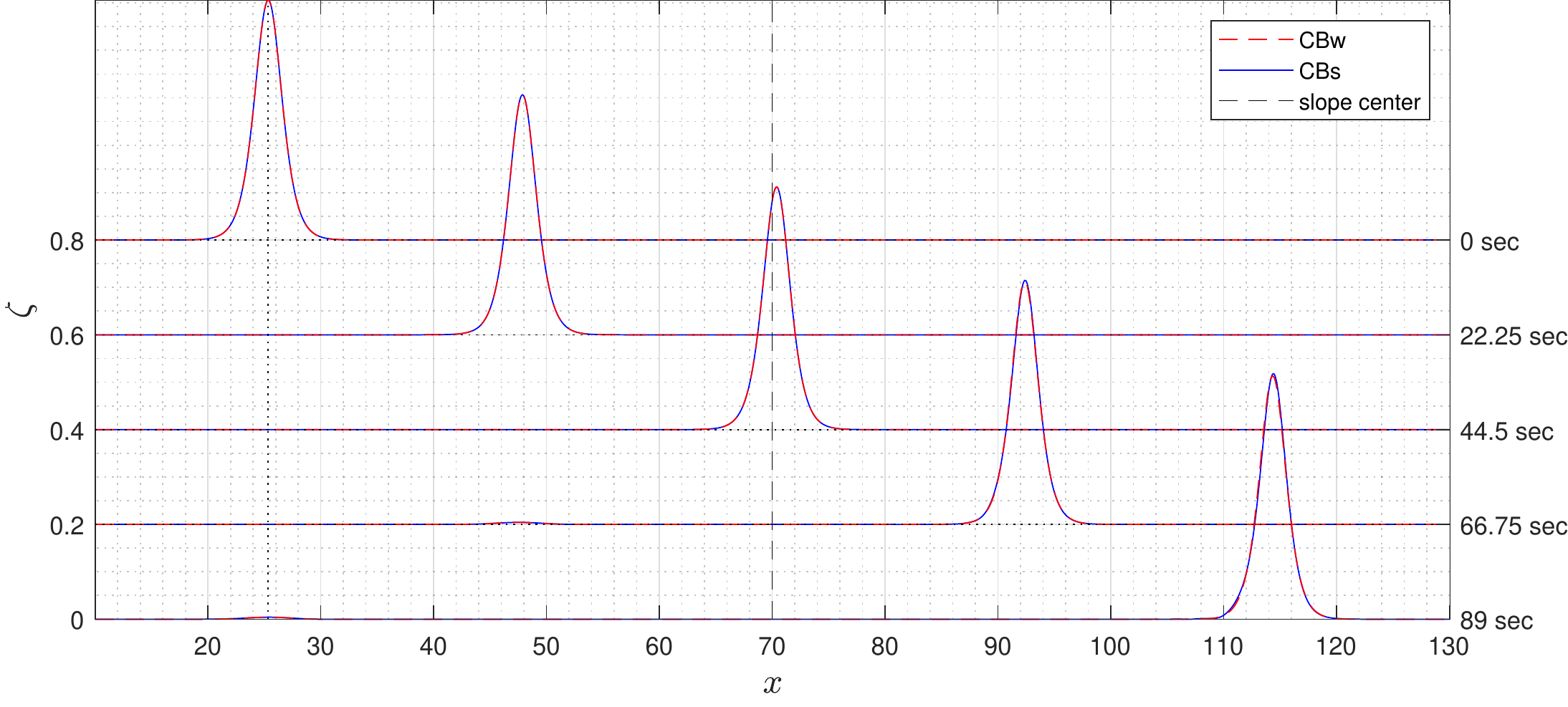}
\vspace{-4ex}
\subcaption{$\beta=\varepsilon=0.05$ \label{fig:11a}}
\end{subfigure}\par
\medskip 
\begin{subfigure}{.9\textwidth} 
\centering 
\includegraphics[width=\textwidth]{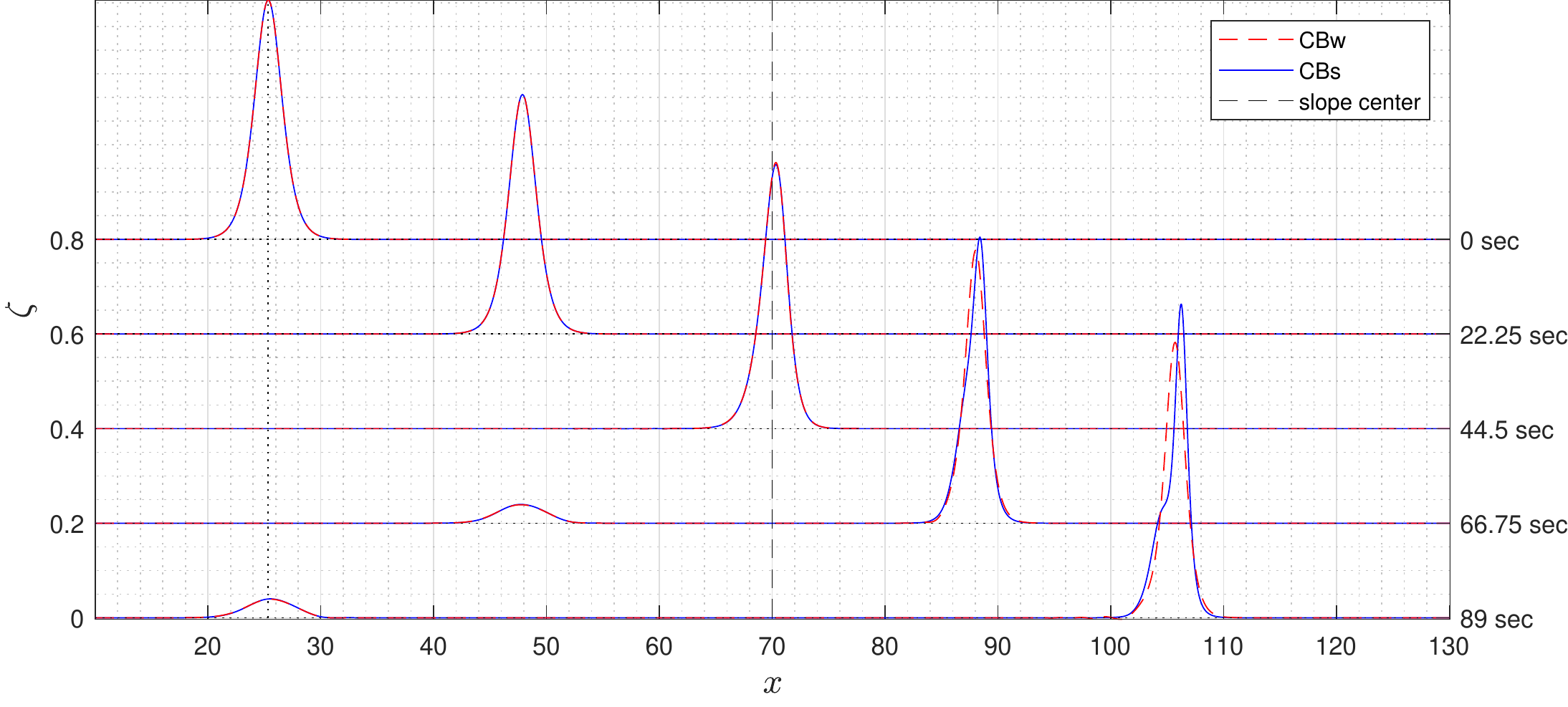}
\vspace{-4ex}
\subcaption{$\beta=0.4$ \label{fig:11b}}
\end{subfigure}\par 
\medskip 
\begin{subfigure}{.9\textwidth} 
\centering 
\includegraphics[width=\textwidth]{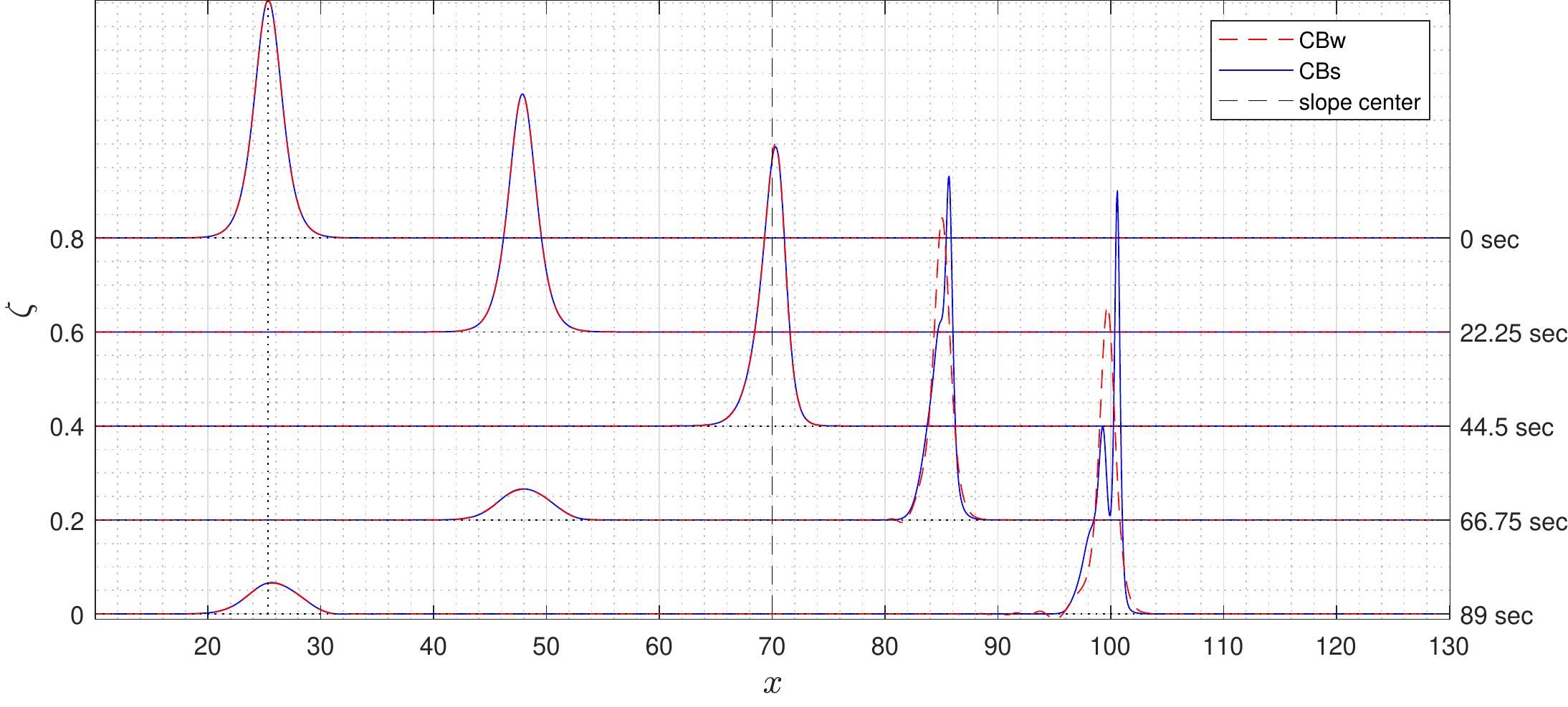}
\vspace{-4ex}
\subcaption{$\beta=0.6$ \label{fig:11c}}
\end{subfigure} 
\caption{Comparison of evolution of a solitary wave under \eqref{eq:CBw} and 
\eqref{eq:CBs} over a bottom of varying steepness: $\eta_b=1-\beta b(x)$, $b(x)$ 
given by \eqref{eq:3p10}, $\varepsilon=\mu=0.05$. \label{fig:11}}
\end{figure}

\subsubsection{Comparison of (CBs) with the Serre-Green-Naghdi system} 
\label{subsubsec:3p3p5} 

Finally, we compare by means of numerical experiment, the evolution of an 
initial solitary wave as it climbs a sloping bed, and as it is reflected by a 
vertical wall at the end of a slope. Recall from the Introduction that the 
system of Serre-Green-Naghdi \eqref{eq:SGN} equations models two-way propagation 
of long dispersive waves (i.e.\ for which $\mu\ll 1$) without the assumption of 
small amplitude, i.e.\ with no restriction of $\varepsilon$, and that 
\eqref{eq:CBs} is obtained from the \eqref{eq:SGN} system with variable bottom 
under the Boussinesq scaling $\varepsilon=\mathcal O(\mu)$, \cite{LB}. The SGN 
system has been used in many computations, cf.\ e.g.\ \cite{CBM2}, \cite{BCLMT}, 
\cite{MSMc}, and their references, that agree quite well with experimental 
results of long-wave propagation over variable bottoms. In \cite{ADM17}, two of 
the authors of the paper at hand, together with D.~Mitsotakis, have analyzed 
Galerkin-finite element methods for \eqref{eq:SGN} on a horizontal bottom (i.e.\ 
for the Serre equations) and shown optimal-order, $L^2$-error estimates in the 
case of periodic splines ($r\geq 3$) on uniform meshes. 

Our aim in this subsection is to compare the results of numerical simulations of 
two test problems with \eqref{eq:CBs}, computed with our code, with numerical 
results for \eqref{eq:SGN} obtained by Mitsotakis {\em et al}.\ in \cite{MSMc}. 
The spatial semidiscretization used in \cite{MSMc} is based on a modified 
Galerkin finite element scheme that uses a projection of a term containing a 
second-order derivative in SGN so that the scheme is also well defined for 
piecewise linear continuous elements (i.e.\ for $r=2$) as well. In what follows 
we will solve numerically \eqref{eq:CBs} using cubic splines on a uniform mesh 
with $N=2000$ and RK4 time stepping with $M=2N$. All variables for this 
experiment are nondimensional and unscaled with $\varepsilon=\mu=1$. 

In the first experiment (shoaling of a solitary wave) we consider the 
variable-bottom example in \S 4.1 of \cite{MSMc}. The geometry, in our notation, 
consists of a channel in the interval $[0,84]$. The bottom is horizontal at a 
depth equal to $-1$ for $0\leq x\leq x_\text{B}=50$, and upsloping with slope 
$\alpha=1/35$ up to $x=84$ where the water depth is equal to $1/35$. The initial 
condition is a solitary wave of the form \eqref{eq:3p6}, \eqref{eq:3p9} of 
amplitude $a_0=0.2$ with crest at $x_0=29.8829$. The evolution of the numerical 
solution is monitored at ten gauges numbered 0, 1, \dots, 9, and located, 
respectively, at $x=45,\ 70.96,\ 72.55,\ 73.68,\ 74.68$, and $76.91$. In this 
experiment the variables are dimensionless and unscaled with 
$\varepsilon=\mu=1$. In the experimental data and the \eqref{eq:SGN} 
computations $g$ was equal to $1$. In Figure~\ref{fig:12} we show the elevation 
of the wave at gauge 0 (at $x=x_\text{B}-5=45$, i.e.\ on the left of the toe of 
the slope), as a function of $t$. The three graphs shown correspond to the 
numerical solutions of \eqref{eq:CBs} and \eqref{eq:SGN}, and to experimental 
data for this problem due to Grilli et al.\ \cite{Gr}, and are all in 
satisfactory agreement. Figure~\ref{fig:13} shows the corresponding graphs of 
the elevation of the wave as a function of time recorded at gauges 1, 3, 5, 7, 
and 9 on the sloping bed. The numerical solution of \eqref{eq:SGN} is in good 
agreement with the experimental data of \cite{Gr}. As the wave climbs up the 
slope the \eqref{eq:CBs} solution grows to a higher amplitude, whose ratio to 
the amplitude of the \eqref{eq:SGN} wave increases monotonically from $1.14$ for 
gauge 1 to $1.49$ for gauge 9. 

\begin{figure}[htbp] 
\centering 
\includegraphics[width=.9\textwidth]{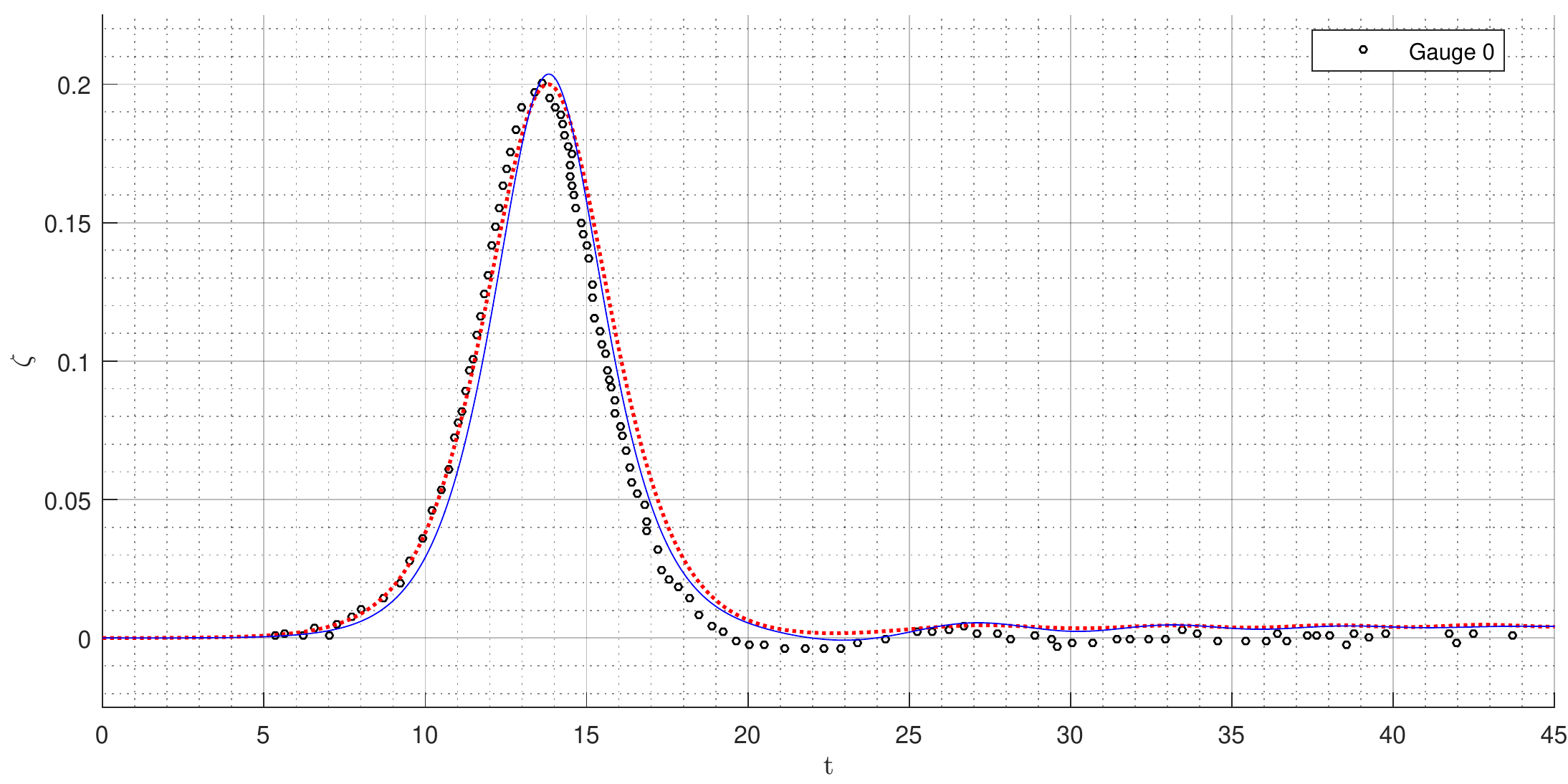}
\caption{Elevation of wave at $x=x_\text{B}-5=45$ as a function of time. Markers 
show the experimental data, \cite{Gr}, dotted lines the numerical solution of 
\eqref{eq:SGN}, \cite{MSMc}, and solid lines the numerical solution of 
\eqref{eq:CBs} system. \label{fig:12}} 
\end{figure} 

\begin{figure}[htbp] 
\centering 
\includegraphics[width=.9\textwidth]{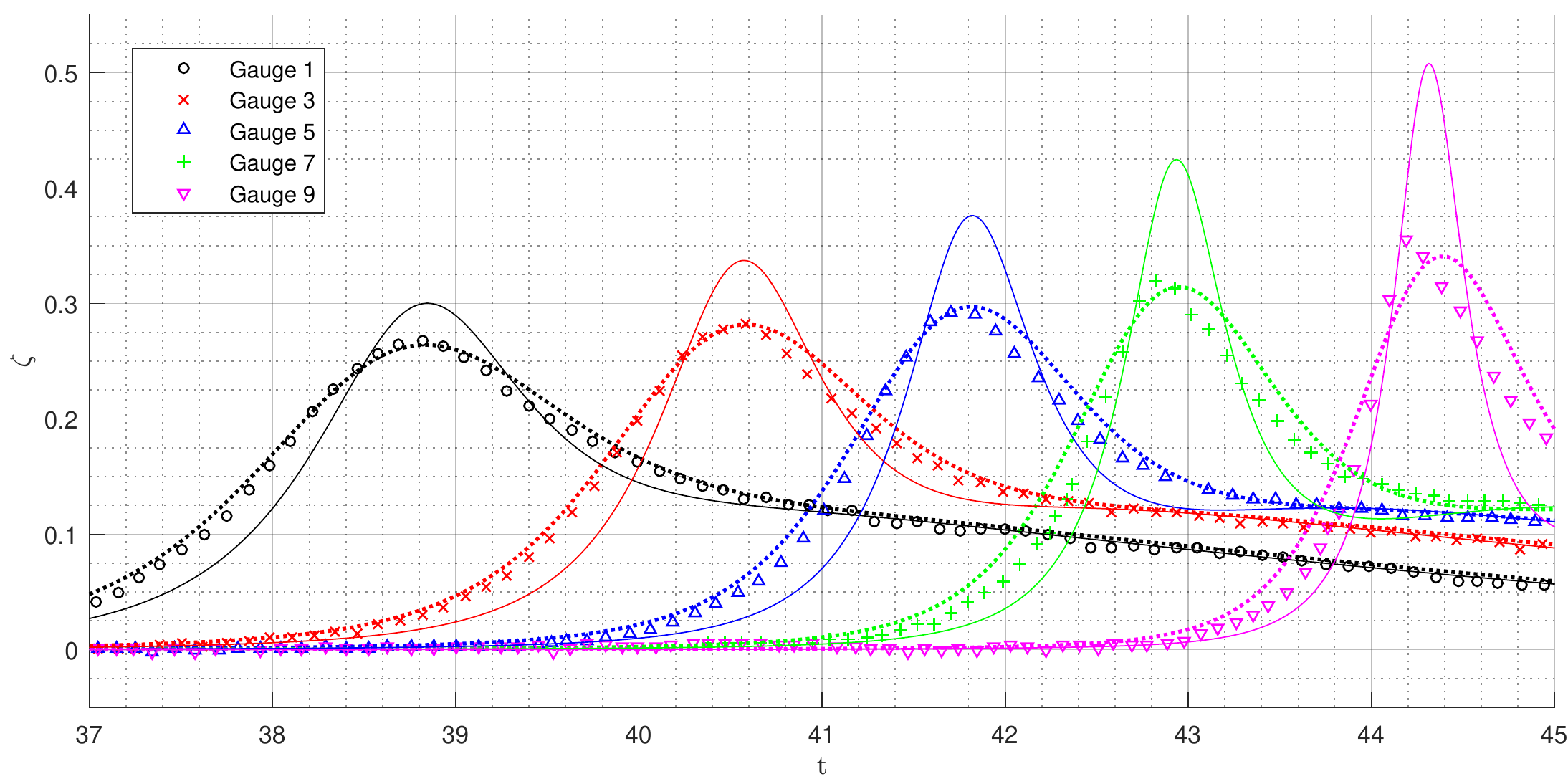}
\caption{Elevation of wave at various gauges as a function of time for the 
shoaling on a beach of slope $1:35$ of a solitary wave with $a_0=0.12.$ Markers 
show experimental daa, \cite{Gr}, dotted lines the numerical solution of 
\eqref{eq:SGN}, \cite{MSMc}, and solid lines the numerical solution of 
\eqref{eq:CBs} system. \label{fig:13}} 
\end{figure} 

For the second numerical experiment (shoaling and reflection of a solitary wave 
from a vertical wall at the end of the sloping beach), we consider a benchmark 
problem, cf.\ e.g.\ \cite{MSMc}, \cite{WB}, \cite{D}, \cite{CBM2}, \cite{BCLMT}, 
among other, that we solve numerically with our code of \eqref{eq:CBs} and 
compare the results with those found by the numerical integration of 
\eqref{eq:SGN} in Section~4.3 of \cite{MSMc}, and with experimental data due to 
Dodd, \cite{D}. The setup consists of a channel of length $[0,70]$, initially 
horizontal at a depth of $h_0=0.7$, a sloping bed of uniform slope $1:50$ that 
starts rising at $x_\text{B}=50$ and ends at $x=70$, where a vertical wall is 
placed. (This is shown in the lower graph of Figure~\ref{fig:14}.) We consider 
two solitary waves of the form \eqref{eq:3p6}, \eqref{eq:3p9} (suitably modified 
so that the horizontal part of the waveguide has now a depth of $h_0=0.7$) with 
amplitudes $0.07$ and $0.12$ and crest initially located at $x=20$. We solve the 
problem numerically with our code for \eqref{eq:CBs} with a boundary condition 
$u=0$ using cubic splines, $N=2000$, $M=2N$. All variables for this experiment 
are dimensional, $x$ and $\eta$ are measured in meters and $t$ in seconds. The 
parameters $\varepsilon$ and $\mu$ are equal to $1$. The value of the 
gravitational acceleration constant is $g=9.80665\,\mathrm m/\mathrm s^2$ 
(standard gravity). 

In Figure~\ref{fig:14} we how snapshots every 3 secs of the \eqref{eq:CBs}-free 
surface elevation as a function of $x$ as the wave (of initial amplitude 
$a_0=0.07$) climbs up the slope and is reflected by the wall at $x=70$ between 
$t=15$ and $t=18$. The reflected pulse apparently consists 
\begin{figure}[htbp] 
\centering 
\includegraphics[width=\textwidth]{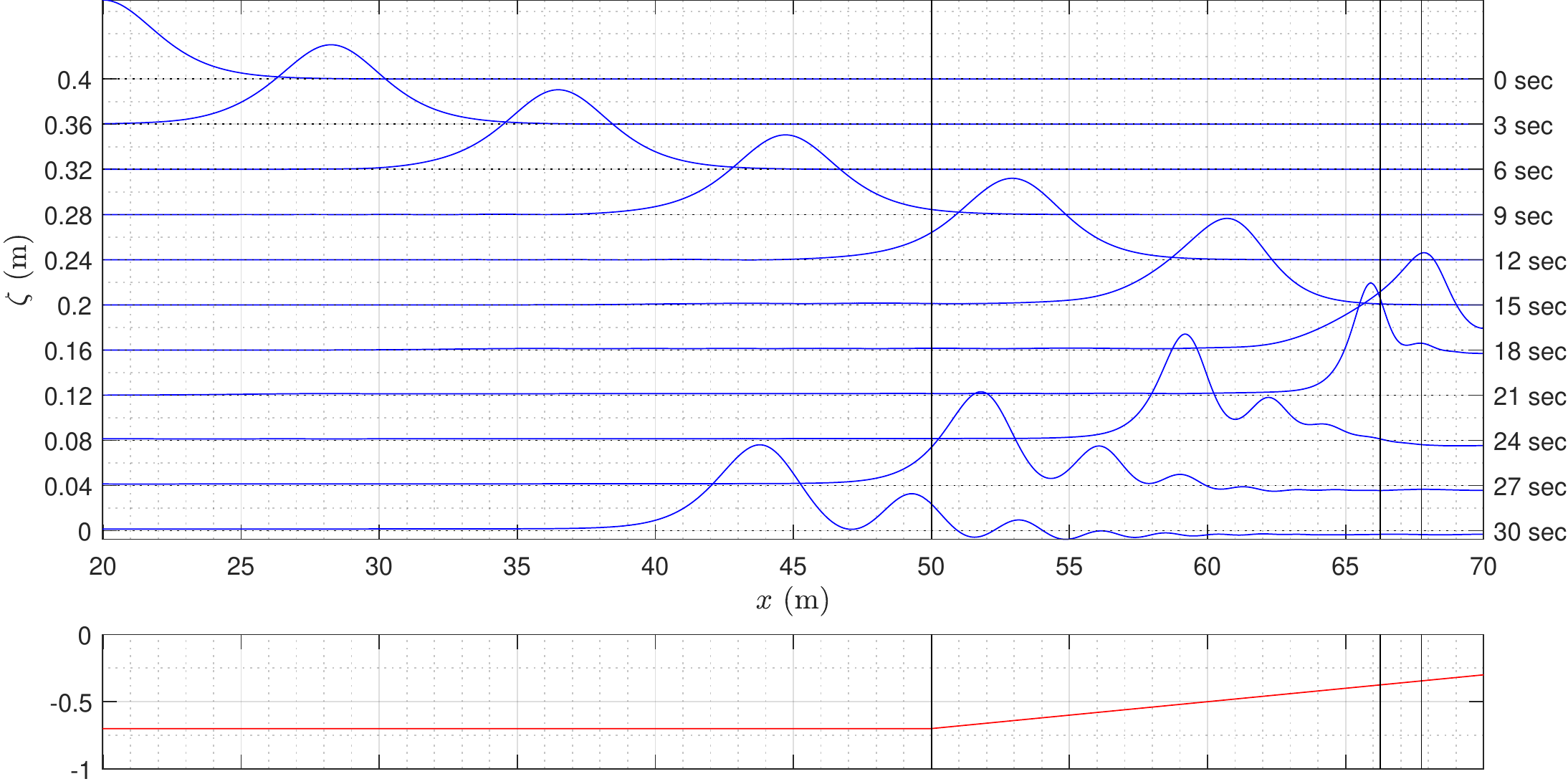} 
\caption{Evolution of the solitary wave of amplitude $a_0=0.07$ according to 
\eqref{eq:CBs} on a beach of slope $1:50$, reflected on a vertical wall at 
$x=70$. Vertical lines depict the location of gauges 1, 2 and 3. \label{fig:14}} 
\end{figure} 
of a leading pulse followed by a dispersive tail. This wave travels downslope, 
and by $t=30$ the leading pulse is located well within the horizontal-bottom 
region. The maximum runup at the wall was recorded to be equal to $.1899$. 

In the (related) Figure~\ref{fig:15} we show the temporal histories of the wave 
elevation $\zeta(x,t)$, generated by the solitary wave of amplitude $a_0=0.07$, 
at three gauges $g_1$, $g_2$, $g_3$, located at $x=50$, $x=66.25$, and $x=67.75$ 
(very close to the wall), respectively, computed by \eqref{eq:CBs} and 
\eqref{eq:SGN} (code of \cite{MSMc}), in comparison with the experimental data 
of \cite{D} for this problem. 

\begin{figure}[hptb] 
\centering 
\includegraphics[width=\textwidth]{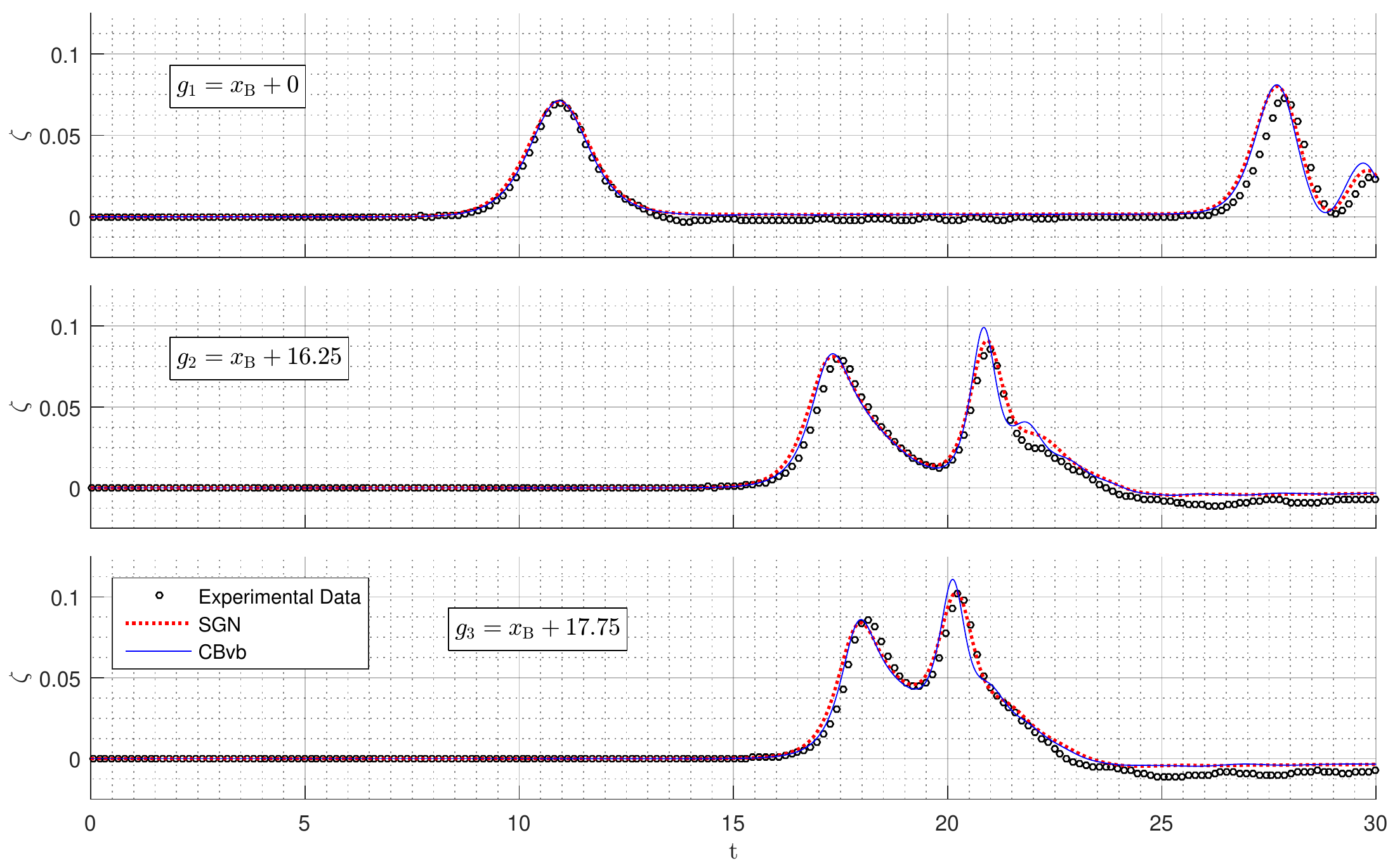} 
\caption{Reflection at a vertical wall located at $x=70$ of a shoaling wave over 
a beach of slope $1:50$, with toe at $x_\text{B}=50$. Initial solitary wave 
amplitude $a_0=0.07$. \label{fig:15}} 
\end{figure} 

\begin{figure}[hptb] 
\centering 
\includegraphics[width=\textwidth]{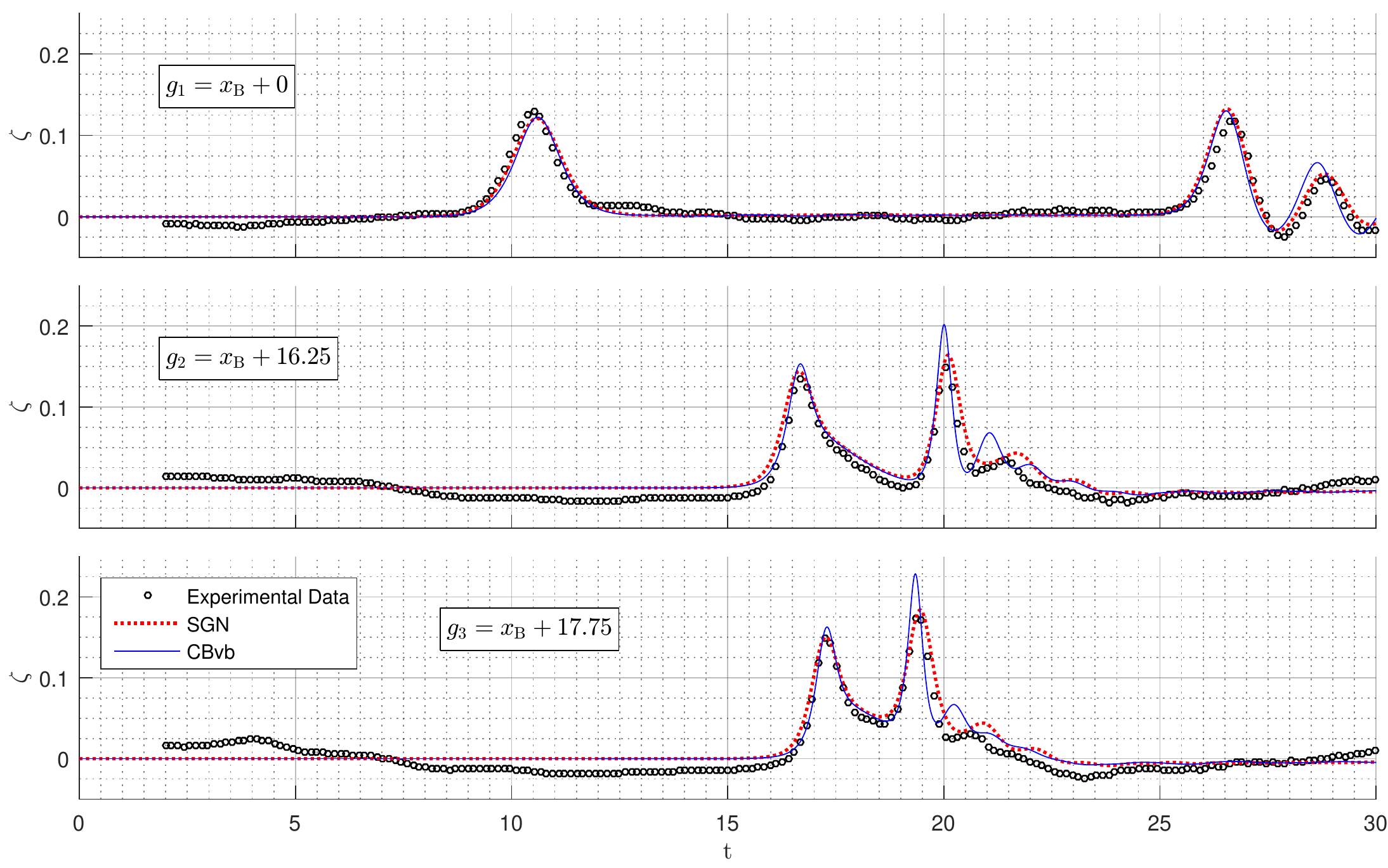} 
\caption{Reflection at a vertical wall located at $x=70$ of a shoaling wave over 
a beach of slope $1:50$, with toe at $x_\text{B}=50$. Initial solitary wave 
amplitude $a_0=0.12$. \label{fig:16}} 
\end{figure} 

We observe that there is quite a good agreement between the three curves. The 
maximum amplitude of the reflected wave at gauge $g_3$ is found to be equal to 
$.11080$ for \eqref{eq:CBs} and to $.10280$ for \eqref{eq:SGN}, giving a ratio 
of about $1.08$. 

Figure~\ref{fig:16} depicts the analogous graphs in the case of the initial 
solitary wave of amplitude $a_0=0.12$. (Note the different scale of the 
$\zeta$-axis.) This wave becomes steeper as it climbs up the slope; the 
reflected wave is of higher amplitude as well. The incident waves computed by 
the two models are quite close to each other and to the experimental data but 
the short-time behavior of the reflected pulse is somewhat different. For 
example, at $g_3$ the amplitude of the reflected \eqref{eq:CBs} pulse is now 
equal to $.2285$ while the amplitude of the \eqref{eq:SGN} reflected pulse is 
$.1838$ (giving a ratio of about $1.24$), and there are phase and amplitude 
differences in the leading trailing oscillations. When the reflected wave has 
returned to the horizontal part of the channel (i.e.\ at $g_1$ in 
Figure~\ref{fig:16} for $t\geq 25$) the agreement is much better and the ratio 
is now $0.98$. The leading reflected pulse of the \eqref{eq:SGN} solution is in 
satisfactory agreement with the data at all three gauges. The maximum runup at 
the wall of \eqref{eq:CBs} for this amplitude was equal to $.4012$. 

Our conclusion from the two numerical experiments in this subsection is that 
when the elevation wave steepens either while climbing up a sloping beach or 
after reflection from a vertical wall and close to the wall, the \eqref{eq:CBs} 
solution overestimates that of the \eqref{eq:SGN}; the latter stays quite close 
to the available experimental data in the cases that we tried.

\section*{Acknowledgements} 
This research was partially supported by IACM-FORTH by the grant ``Innovative 
Actions in Environmental Research and Development (PErAn)'' (MIS 5002358), 
implemented under the ``Action for the strategic development of the Research and 
Technological sector'' funded by the Operational Program ``Competitiveness, and 
Innovation'' (NSRF 2014-2020) and cofinanced by Greece and the EU (European 
Regional Development Fund). G.~Kounadis also acknowledges scholarship support in 
the initial stages of the project from the Stavros Niarchos Foundation `Archers' 
grant to FORTH. The authors also express their thanks to Dr.\ D.~E.~Mitsotakis 
for making available to them the numerical data for \eqref{eq:SGN} of 
\cite{MSMc} quoted in the last two experiments.

\bibliographystyle{elsarticle-num} 
\bibliography{cbvb_cg_arxiv}

\end{document}